\numberwithin{equation}{section}
\newcolumntype{Y}{>{\centering\arraybackslash}X}
\newtheorem{theorem}{Theorem}
\newtheorem{lemma}[theorem]{Lemma}
\newtheorem{corollary}[theorem]{Corollary}
\newtheorem{definition}[theorem]{Definition}
\theoremstyle{remark}
\newtheorem{remark}[theorem]{Remark}
\numberwithin{theorem}{section}
\newcommand{\Wal}{\operatorname{Wal}}
\def\hatgap{2pt}
\def\subdown{-2pt}
\newcommand\reallywidehat[2][]{%
	\renewcommand\stackalignment{l}%
	\stackon[\hatgap]{#2}{%
		\stretchto{%
			\scalerel*[\widthof{$#2$}]{\kern-.6pt\bigwedge\kern-.6pt}%
			{\rule[-\textheight/2]{1ex}{\textheight}}
		}{0.5ex}
		_{\smash{\belowbaseline[\subdown]{\scriptscriptstyle#1}}}%
}}
\newcommand{\RM}{\mathcal{R}_M}
\newcommand{\SN}{\mathcal{S}_N}
\newcommand{\N}{\mathbb{N}}
\newcommand{\R}{\mathbb{R}}
\newcommand{\bfM}{\mathbf M}
\newcommand{\bfN}{\mathbf N}
\newcommand{\bfs}{\mathbf s}
\newcommand{\PNk}{P_{N_k}^{N_{k-1}}}
\newcommand{\PMl}{P_{M_l}^{M_{l-1}}}
\begin{document}

	\title[]{Non-uniform recovery guarantees for binary measurements and infinite-dimensional compressed sensing}

	\author{L. Thesing} 
	\address{DAMTP, University of Cambridge}
	\email{lt420@cam.ac.uk}

	\author{A. C. Hansen} 
\address{DAMTP, University of Cambridge}
\email{ach70@cam.ac.uk}

\begin{abstract}
Due to the many applications in Magnetic Resonance Imaging (MRI), Nuclear Magnetic Resonance (NMR), radio interferometry, helium atom scattering etc., the theory of compressed sensing with Fourier transform measurements has reached a mature level. However, for binary measurements via the Walsh transform, the theory has long been merely non-existent, despite the large number of applications such as fluorescence microscopy, single pixel cameras, lensless cameras, compressive holography, laser-based failure-analysis etc. Binary measurements are a mainstay in signal and image processing and can be modelled by the Walsh transform and Walsh series that are binary cousins of the respective Fourier counterparts. We help bridging the theoretical gap by providing non-uniform recovery guarantees for infinite-dimensional compressed sensing with Walsh samples and wavelet reconstruction. The theoretical results demonstrate that compressed sensing with Walsh samples, as long as the sampling strategy is highly structured and follows the structured sparsity of the signal, is as effective as in the Fourier case. However, there is a fundamental difference in the asymptotic results when the smoothness and vanishing moments of the wavelet increase. In the Fourier case, this changes the optimal sampling patterns, whereas this is not the case in the Walsh setting. 
\end{abstract}

\keywords{Sampling theory, compressed sensing, structured sparsity, Wavelets, Walsh functions, Hadamard transform, Hilbert spaces, 42C10, 42C40, 94A12, 94A20}

\maketitle

\section{Introduction}
Since Shannon's classical sampling theorem \cite{Shannon, Shannon50}, sampling theory has been a widely studied field in signal and image processing. Infinite-dimensional compressed sensing \cite{GSinfCS, breaking, Nature_sci_rep, Clarice, PoonFrames, KutyniokLimShearletFourier, CS1} is part of this rich theory and offers a method that allows for infinite-dimensional signals to be recovered from undersampled linear measurements. This gives a non-linear alternative to other methods like generalized sampling \cite{2DCase, GS, sharpBounds, PolySSR, Hansen_JAMS,AHPWavelet, Ma} and the Parametrized-Background Data-Weak (PBDW)-method \cite{deVore1, deVore2, deVore3,PBDW, maday2, maday3} that reconstruct infinite-dimensional objects from linear measurement. However, these methods do not allow for subsampling, and hence are dependent on consecutive samples of, for example, the Fourier transform. Infinite-dimensional compressed sensing, on the other hand, is similar to generalized sampling and the PBDW-method but utilises an $\ell^1$ optimisation problem that allows for subsampling.

Beside the typical flagship of modern compressed sensing, namely MRI \cite{MRI, Unser_MRI}, there is also a myriad of other applications, like fluorescence microscopy \cite{fluorescence, Candes_PNAS}, single pixel cameras \cite{SinglePixel}, medical imaging devices like computer tomography \cite{Stanford_CT}, electron microscopy \cite{leary2013etcs}, lensless cameras \cite{LenslessImaging}, compressive holography \cite{compressiveHolography} and laser-based failure-analysis \cite{laserBasedFailureAnalysis} among others. The applications divide themselves in three different groups: those that are modelled by Fourier measurements, like MRI \cite{MRI}, those that are based on the Radon transform, as in CT imaging \cite{Stanford_CT, quinto2006xrayradon}, and those that are represented by binary measurements, which are named above. For Fourier measurements there exists a large history of research. However, for Radon measurements, the theoretical results are scarce and for binary measurements results have only evolved recently. In this paper we consider binary measurements and provide the first non-uniform recovery guarantees in one dimension for infinite-dimensional compressed sensing with the reconstruction with boundary corrected Daubechies wavelets. 

The setup of infinite-dimensional compressed sensing is as follows. We consider an orthonormal basis $\left\{\varphi_j\right\}_{j \in \N}$ of a Hilbert space $\mathcal{H}$ and an element $f \in \mathcal H$ with its representation
\[
f = \sum_{j \in \N}x_j \varphi_j \in \mathcal{H}, \quad x_j \in \mathbb{C},
\]
to be recovered from measurements given by linear operators working on $f$. That is, we have another orthonormal basis $\left\{\omega_i\right\}_{i \in \N}$
of $\mathcal{H}$ and we can access the linear measurements given by $l_i(f) = \langle f,  \omega_i \rangle$. 
Although the Hilbert space can be arbitrary, we will in applications mostly consider function spaces. Hence, we will often 
refer to the object $f$ as well as the basis elements as functions. We call the functions $\omega_i$, $i \in \N$ sampling functions and the space spanned by them $\mathcal S = \overline{\operatorname{span}}\left\{\omega_i : i \in \N \right\}$ sampling space. Accordingly, $\varphi_j$, $j \in \N$ are called reconstruction functions and $\mathcal R = \overline{\operatorname{span}}\left\{ \varphi_j : j \in \N \right\}$ reconstruction space. Generalized sampling \cite{GS,GS2,GSinfCS} and the PBDW-method \cite{PBDW} use the change of basis matrix $U = \{u_{i,j}\}_{i,j \in \mathbb{N}} \in \mathcal B ( \ell^2(\N))$ with $u_{i,j} = \langle \omega_i, \varphi_j \rangle$ to find a solution to the problem of reconstructing coefficients in the reconstruction space from measurements in the sampling space. This is also the case in infinite-dimensional compressed sensing.  In particular, we consider the following \emph{reconstruction problem}. Let $\Omega \subset \left\{ 0,1, \ldots, N_r \right\}$ be the subsampling set, where the elements are ordered canonically when needed, and let $P_\Omega$ the orthogonal projection onto the elements indexed by $\Omega$ and $||z||_2 \leq \delta$ some additional noise. For a fixed signal $f = \sum_j x_j \varphi_j$ and the measurements $g = P_\Omega U f + z$ the reconstruction problem is to find a minimiser of
\begin{equation}\label{Eq:ReconstructionProblem}
\min_{\xi \in \ell^1(\N)} \|\xi\|_{1} \text{ subject to } \|P_\Omega U \xi - g \|_2 \leq \delta.
\end{equation}

\section{Preliminaries}\label{Ch:NonRecGen}

\subsection{Setting and Definitions}

In this section we recall the settings from \cite{breaking} that are needed to establish the main results. 
First, note that we will use $a \lesssim b$ to describe that $a$ is smaller $b$ modulo a constant, i.e. there exists some $C >0$ such that $a \leq Cb$. Moreover, for a set $\Omega \subset \N$ the orthogonal projection corresponding to the elements of the canonical bases of $\ell^2(\N)$ with the indices of $\Omega$ is denoted by $P_\Omega$. Similar, for $N \in \N$ the orthogonal projection onto the first $N$ elements of the canonical basis of $\ell^2(\N)$ and $\ell^1(\N)$ is represented by $P_N$ and the projection on the orthogonal complement by $P_N^\perp$. If we project onto the sampling space $\SN$ this is denoted by $P_{\SN}$ and as before the complement by $P_{\SN}^\perp$. Finally, $P_b^a$ stands for the orthogonal projection onto the basis vectors related to the indices $\left\{ a+1, \ldots, b \right\}$.

Note that \eqref{Eq:ReconstructionProblem} is an infinite-dimensional optimisation problem, however, in practice \eqref{Eq:ReconstructionProblem} is replaced by 
\begin{equation*}
\min_{\xi \in \ell^1(\N)} \|\xi\|_{1} \text{ subject to } \|P_\Omega UP_L \xi - g \|_2 \leq \delta.
\end{equation*}
As $L \rightarrow \infty$ one recovers minimisers of \eqref{Eq:ReconstructionProblem} (see \S 4.3. in \cite{GSinfCS} for details).

X-lets such as wavelets \cite{WaveletDef}, Shearlets\cite{Shearlets2,Shearlets3} and Curvelets \cite{Curvelets,Curvelets2,Curvelets3} yield a specific sparsity structure. The construction includes a scaling function which allows to divide them into several levels. The same levels also dominate the sparsity structure. To describe this phenomena the notation of $(\mathbf s, \mathbf M)$-sparsity is introduced instead of global sparsity.
\begin{definition}[Def. 3.3 \cite{breaking}]\label{Def:SparsityInLevel}
	Let $x \in \ell^2(\N)$. For $r \in \N$ let $\mathbf M = (M_1, \ldots, M_r) \in \N$ with $1 \leq M_1 < \ldots < M_r$ and $\mathbf s = (s_1, \ldots , s_r) \in \N^r$, with $s_k \leq M_k-M_{k-1}$, $k=1,\ldots,r$ where $M_0 =0$. We say that $x$ is $(\mathbf s, \mathbf M)$-sparse if, for each $k=1, \ldots,r$,
	\begin{equation}
	\Delta_k := \operatorname{supp}(x) \cap \left\{M_{k-1}+1, \ldots, M_k\right\},
	\end{equation}
	satisfies $| \Delta_k| \leq s_k$. We denote the set of $(\mathbf s, \mathbf M)$- sparse vectors by $\Sigma_{\mathbf s, \mathbf M}$.
\end{definition}

Most natural signals are not perfectly sparse. But they can be represented with a small tail in the X-let bases, or with the according ordering in other sparsifying representation systems. Hence, in a large number of applications it is unlikely to ask for sparsity but compressibility.

\begin{definition}[Def. 3.4 \cite{breaking}]\label{Def:sMtermApproximation}
	Let $f = \sum_{j \in \N} x_j \varphi_j$, where $x= (x_j)_{j\in \N} \in \ell^2(\N)$. We say that $f$ is $(\mathbf s, \mathbf M)$- compressible with respect to $\left\{ \varphi_j \right\}_{j \in \N}$ if $\sigma_{\mathbf s,\mathbf M}(f)$ is small, where
	\begin{equation}
	\sigma_{\mathbf s,\mathbf M}(f) = \min_{\eta \in \Sigma_{\mathbf s, \mathbf M} } || x- \eta||_{1}.
	\end{equation}
\end{definition}

In terms of this more detailed description of the signal instead of classical sparsity it is possible to adapt the sampling scheme accordingly. Complete random sampling will be substituted by the setting of multilevel random sampling. This allows us later to treat the different levels separately. Moreover, this represents sampling schemes that are used in practice.

\begin{definition}[Def 3.2 \cite{breaking}]\label{Def:multilevelrandomsampling}
	Let $r \in \N, \mathbf N = (N_1, \ldots , N_r) \in \N^r$ with $1 \leq N_1 < \ldots <N_r$, $\mathbf m = (m_1, \ldots, m_r) \in \N^r$, with $m_k \leq N_k - N_{k-1}$, $k=1, \ldots,r$, and suppose that
	\begin{equation}\label{Eq:multilevelsampling1}
		\Omega_k \subset \left\{N_{k-1}+1, \ldots, N_k\right\}, \quad |\Omega_k| = m_k, \quad k=1, \ldots, r,
	\end{equation}
	are chosen uniformly at random without replacement, where $N_0 =0$. We refer to the set
	\begin{equation}
		\Omega = \Omega_{\mathbf N, \mathbf m} = \Omega_1 \cup \ldots \cup \Omega_r
	\end{equation}
	as an $(\mathbf N , \mathbf m)$- multilevel sampling scheme.
\end{definition}

\begin{remark}
	To avoid pathological examples, we assume as in \S 4 in \cite{breaking} that we have for the total sparsity $s = s_1 + \ldots s_r \geq 3$. This results in the fact that $\log(s) \geq 1$ and therefore also $m_k\geq 1$ for all $k=1,\ldots,r$.
\end{remark}

\section{Main results: Non-uniform recovery for the Walsh-wavelet case}\label{Ch:NonUniRec}
In this paper we focus on the reconstruction of one-dimensional signals from binary measurements, which can be modelled as inner products of the signal with functions that take only values in $\left\{0,1\right\}$. This arises naturally in examples like those mentioned in the introduction.  
We focus on the setting of recovering data in $L^2([0,1])$. However, the theory from \cite{breaking} also applies to general results for Hilbert spaces. Linear measurements are typically represented by inner products between sampling functions and the data of interest. Binary measurements can be represented with functions that take values in $\left\{0,1\right\}$, or, after a well-known and convenient trick of subtracting constant one measurements, with functions that take values in $\left\{-1,1\right\}$. For practical reasons it is sensible to consider functions for the sampling bases that provide fast transforms. Additionally, the function system should correspond well to the chosen representation system of the reconstruction space. For the reconstruction with wavelets, Walsh functions have proven to be a sensible choice, and are discussed in more detail in \S \ref{Ch:SamplingSpace}. Sampling from binary measurements has been analysed for linear reconstruction methods in \cite{WalshSSR,WalshHaar,Vegard} and for non-linear reconstruction methods in \cite{VegardUniform,moshtaghpour2020close}. We extend these results to the non-uniform recovery guarantees for non-linear methods. The combined work result in broad knowledge about linear and non-linear reconstruction for two of the three main measurement systems: Fourier and binary.

In the following we consider for the change of basis matrix 
\begin{equation}\label{eq:the_U}
U = \{u_{i,j}\}_{i,j \in \mathbb{N}} \in \mathcal B ( \ell^2(\N)), \quad u_{i,j} = \langle \omega_i, \varphi_j \rangle,
\end{equation}
where $\{\omega_j\}_{j \in \mathbb{N}}$ denotes the Walsh functions on $[0,1]$ as described in \S \ref{Ch:SamplingSpace}, and $\{\varphi_i\}_{i \in \mathbb{N}}$ demotes the Daubecies boundary wavelets on $[0,1]$ of order $p \geq 3$ described in \S \ref{sec:wavelets}. For the sake of readability, we always consider Daubechies boundary corrected wavelets in the following if we say wavelets.

We are now able to state the recovery guarantees for the Walsh-wavelet case. 

\begin{theorem}[Main theorem]\label{Theo:Main}
Given the notation above, let $\mathbf N = (N_0,\ldots,N_r)$ define the sampling levels as in \eqref{Eq:N} and $\mathbf M = (M_0, \ldots, M_r)$ represent the levels of the reconstruction space as in \eqref{Eq:M}. Consider $U$ as in \eqref{eq:the_U} , $\epsilon >0$ and let $\Omega = \Omega_{N,m}$ be a multilevel sampling scheme such that the following holds:
	\begin{enumerate}
		\item Let $N=N_r$, $K= \max_{k=1,\ldots,r} \left\{ \frac{N_{k}-N_{k-1}}{m_k} \right\}$, $M=M_r$, $s= s_1 + \ldots +s_r$ such that
		\begin{equation}\label{Eq:MainTheoAssumptionNM}
		N \gtrsim M^{2} \cdot \log(4MK\sqrt{s}).
		\end{equation}
		\item For each $k =1, \ldots, r$,
		\begin{align}\label{Eq:MainTheoAssumption}
		m_k \gtrsim \log(\epsilon^{-1}) \log \left( K^3s^{3/2} N \right) \cdot \frac {N_k - N_{k-1}} {N_{k-1}} \cdot \left( \sum_{l=1}^r 2^{-|k-l|/2}s_l \right)
		\end{align}
	\end{enumerate}
	Then with probability exceeding $1-s\epsilon$, any minimizer $\xi \in \ell^2(\N)$ of \eqref{Eq:ReconstructionProblem} satisfies
	\begin{equation}
	\|\xi - x \|_2 \leq c \cdot \left( \delta \sqrt{K}(1+L \sqrt{s}) + \sigma_{s,M}(f) \right),
	\end{equation}
	for some constant $c$, where $L = c \cdot \left( 1 + \frac{\sqrt{\log(6 \epsilon^{-1})}}{\log(4KM \sqrt{s})} \right)$. If $m_k = N_{k} - N_{k-1}$ for $1 \leq k \leq r$ then this holds with probability $1$.
\end{theorem}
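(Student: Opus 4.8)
The plan is to derive Theorem~\ref{Theo:Main} from the abstract non-uniform recovery theorem for infinite-dimensional compressed sensing of \cite{breaking} (in the infinite-dimensional form relevant to \eqref{Eq:ReconstructionProblem}; see also \cite{GSinfCS}), applied to the Walsh–wavelet operator $U$ of \eqref{eq:the_U}. That result produces exactly the error bound $\|\xi-x\|_2\le c(\delta\sqrt{K}(1+L\sqrt s)+\sigma_{s,M}(f))$, with $L$ as stated and success probability $1-s\epsilon$, provided two structural inputs hold: (i) the strong balancing property relating the top sampling level $N=N_r$ to the top reconstruction level $M=M_r$; and (ii) for each $k$, a lower bound on $m_k$ built from the local coherences $\mu(\PNk U\PMl)$ and the relative sparsities $\tilde s_k=\tilde s_k(\mathbf N,\mathbf M,\mathbf s)$. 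So the whole argument reduces to establishing (i) and (ii) for this particular pair of bases, everything probabilistic and noise-dependent being inherited unchanged. The deterministic case $m_k=N_k-N_{k-1}$ is then immediate: $P_\Omega U=P_N U$, no randomness enters, and the estimate follows from the balancing property alone.

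For (i), I would show that $\|P_M^\perp U^\ast P_N U P_M\|$ is below the required absolute threshold, together with the companion bound preventing $P_N U P_M$ from collapsing $\ell^2$-norms on $(\mathbf s,\mathbf M)$-sparse coefficient vectors. The mechanism is that Walsh functions are constant on dyadic intervals, so $U$ is — up to the $O(1)$ boundary-corrected wavelets at each scale and a regularity-controlled off-diagonal tail — block lower-triangular with respect to dyadic levels; this yields a norm-decay estimate of the shape $\|P_N^\perp U P_M\|\lesssim M/\sqrt N$, up to logarithmic factors, of the type established for Walsh–wavelet matrices in \cite{WalshHaar,Vegard,WalshSSR}. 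Inserting this into \eqref{Eq:MainTheoAssumptionNM}, i.e.\ $N\gtrsim M^2\log(4MK\sqrt s)$, produces the balancing property.

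For (ii), the heart of the matter is the local-coherence estimate
\[
\mu\bigl(\PNk U\PMl\bigr)\;\lesssim\;\frac{2^{-|k-l|/2}}{N_{k-1}},
\]
in which — and this is the phenomenon advertised in the introduction — the cross-level decay exponent $-|k-l|/2$ does \emph{not} sharpen as the wavelet order $p$ increases, unlike in the Fourier–wavelet case. It is obtained by combining the exact Hadamard-type block structure on the diagonal ($k=l$) with the quantitative cross-scale decay of $\langle\omega_i,\varphi_j\rangle$. The associated relative sparsity then satisfies $\tilde s_k\lesssim\sum_{l=1}^r 2^{-|k-l|/2}s_l$, the square root in the exponent surviving because $\tilde s_k$ measures an $\ell^2$-energy over the $k$-th sampling block of a superposition of at most $s_l$ wavelet columns from each level $l$. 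Substituting the coherence and relative-sparsity bounds into the abstract lower bound on $m_k$ — the prefactor $N_k-N_{k-1}$ coming from the weight $\hat N_k$ in that bound and the denominator $N_{k-1}$ from the coherence estimate, while $\log(K^3s^{3/2}N)$ absorbs the $\log(\tilde N\tilde s)$-type logarithmic factor and $\log(\epsilon^{-1})$ is carried along as in \cite{breaking} — reproduces \eqref{Eq:MainTheoAssumption}, and the theorem follows.

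The main obstacle is the coherence estimate in (ii): one must prove the cross-level bound $2^{-|k-l|/2}$ \emph{uniformly} over all blocks while (a) separating the $O(1)$ boundary wavelets at each scale from the interior Daubechies wavelets, and (b) coping with non-dyadic endpoints $N_k,M_k$, where the block-triangular picture is only approximate; and then upgrading the amplitude bound to the relative-sparsity bound at the correct $2^{-|k-l|/2}$ rate without incurring spurious powers of $s$. By comparison (i) is routine once the decay of $\|P_N^\perp U P_M\|$ is available, and the stochastic part of the proof is borrowed verbatim from \cite{breaking}.
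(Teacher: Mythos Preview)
Your plan is the paper's plan: apply Theorem~5.3 of \cite{breaking}, verify the strong balancing property from the decay of $\|P_N^\perp U P_M\|$, and verify the two $m_k$-conditions from local-coherence and relative-sparsity estimates (plus a bound on $\tilde M$, which you omit but which is a one-line consequence of the coherence bound). Two quantitative slips to correct, though. First, the operator-norm decay is $\|P_N^\perp U P_M\|_2\lesssim\sqrt{M/N}$, not $M/\sqrt N$; the squared relation $N\gtrsim M^2\log(\cdot)$ arises because the balancing property is an $\ell^\infty\!\to\!\ell^\infty$ bound, and passing from $\|\cdot\|_2$ to $\|\cdot\|_{\ell^\infty\to\ell^\infty}$ costs a factor $\sqrt M$. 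Second, the block coherence itself satisfies $\mu(\PNk U\PMl)\lesssim 2^{-|k-l|}/N_{k-1}$ with the \emph{full} exponent; the $2^{-|k-l|/2}$ you wrote is the decay of the composite local coherence $\mu_{\mathbf N,\mathbf M}(k,l)=\sqrt{\mu(\PNk U\PMl)\,\mu(\PNk U)}$, which is the quantity that actually enters Theorem~5.3. Finally, your worry about non-dyadic endpoints is moot here, since by \eqref{Eq:M}--\eqref{Eq:N} all $M_l,N_k$ are powers of two.
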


This result allows one to exploit the asymptotic sparsity structure of most natural images under the wavelet transform. It was observed in Figure 3 in \cite{breaking} that the ratio of non-zero coefficients per level decreases very fast with increasing level and at the same time the level size increases. Hence, most images are not that sparse in the first levels and sampling patterns should adapt to that. However, they are very sparse in the higher levels. This difference in the sparsity is used in the theorem. The number of samples per level $m_k$ depends mainly on the sparsity in the related level $s_k$. The other sparsity terms $s_l$ with $l \neq k$ come in with a scaling of $2^{-|k-l|}$. This means that the impact of levels which are far away decays exponentially. 

The number of samples $m_k$ in relation to the level size $N_k - N_{k-1}$ also impacts the probability of an accurate reconstruction. Hence, in case the number of samples is very small the theorem still holds true but the probability that the algorithm succeeds becomes very small and the error large. Therefore, it is of high importance to choose the number of samples according to the desired accuracy and success probability. This is always a balancing act. The relationship also comes into play for the size of $K$. For large levels with very few measurements this value can become larger. Nevertheless, even if the largest levels are subsampled with only $1\%$ we get that $K = 100$. Additionally, the value $K$ only comes into play in a logarithmic term. Therefore, the impact of $K$ reduces to a reasonable size and the number of necessary samples or the relationship between $N$ and $M$ stays small.

\begin{remark}\label{Rem:ReductionNk}
	For awareness of potential extensions of this work to higher dimensions or other reconstruction and sampling spaces we kept the factor $(N_k - N_{k-1})/N_{k-1}$ in \eqref{Eq:MainTheoAssumption}. However, for the Walsh-wavelet case in one dimension this factor reduces to $1$.
	Hence, the Equation \eqref{Eq:MainTheoAssumption} can be further simplified to
	\begin{equation}
		m_k \gtrsim \log(\epsilon^{-1}) \log \left( K^3s^{3/2} N \right) \cdot \left( \sum_{l=1}^r 2^{-|k-l|/2}s_l \right),
	\end{equation}
	however, in general one needs the factor $(N_k - N_{k-1})/N_{k-1}$. 
\end{remark}

\begin{remark}\label{Rem:RelationFourier}
We would like to highlight the differences to the Fourier-wavelet case, i.e. to Theorem 6.2. in \cite{breaking}. The most striking difference is the squared factor in \eqref{Eq:MainTheoAssumptionNM}. In the Fourier-wavelet case this is dependent on the smoothness of the wavelet and shown to be 
$
	N \gtrsim M^{1 + 1/(2\alpha -1)} \cdot (\log(4MK\sqrt s ))^{1/(2\alpha -1)},$
where $\alpha$ denotes the decay rate under the Fourier transform, i.e. the smoothness of the wavelet. For very smooth wavelets this can be improved to 
\begin{equation}
	N \gtrsim M \cdot (\log(4MK\sqrt s ))^{1/(4\alpha -2)}.
\end{equation}
Hence, for very smooth wavelets we get the optimal linear relation, beside log terms. However, for non-smooth wavelets like the Haar wavelet, we get a squared relation instead of linear. The reason why we do not observe a similar dependence on the smoothness in terms of the sampling relation is that smoothness of a wavelet does not relate to a faster decay under the Walsh transform. 
This is also related to the fact that for Fourier measurements \eqref{Eq:MainTheoAssumption} become
\begin{equation}\label{eq:cond_wav1}
\begin{split}
m_k &\gtrsim \log(\epsilon^{-1})  \cdot  \log(\tilde N) \cdot \frac{N_k - N_{k-1}}{N_{k-1}}  \cdot
\Bigg(\hat s_k +\sum_{l=1}^{k-2}  s_l \cdot   2^{-(\alpha-1/2)A_{k,l}} +   \sum_{l=k+2}^r s_l \cdot 2^{-v B_{k,l}}\Bigg),
\end{split}
\end{equation}
where $A_{k,l}$ and $B_{k,l}$ are positive numbers, $\tilde N = (K\sqrt{s})^{1+ 1/v} 
N$, where $v$ denotes the number of vanishing moments, and $\hat s_k = \max\{s_{k-1},s_k,s_{k+1}\}.$ In particular, smoothness and vanishing moments of the wavelet does have an impact in the Fourier case, but not in the Walsh case. 
This is also confirmed in Figure \ref{fig:RecMatrix}, where we have plotted the absolute values of sections of $U$, where $U$ is the infinite matrix from \eqref{eq:the_U}. 
As can be seen in Figure \ref{fig:RecMatrix}, the matrix $U$ gets more block diagonal in the Fourier case with more vanishing moments confirming the dependence of $\alpha$ and $v$ in \eqref{eq:cond_wav1}. Note that for a completely block diagonal matrix $U$ the $m_k$ in \eqref{eq:cond_wav1} will only depend on $s_k$ and not any of the $s_l$ when $l \neq k$. In contrast this effect is not visible in the Walsh situation suggesting that the estimate in \eqref{Eq:MainTheoAssumption} captures the correct behaviour by not depending on $\alpha$ and $v$. The reason why is that a function needs to be smooth in the dyadic sense to have a faster decay rate under the Walsh transform. However, this is not related to classical smoothness but dyadic smoothness. So far it is not known which wavelets behave better under the Walsh transform. Therefore, it is possible that the estimate is not sharp and that we can get faster decay rates for specific wavelet orders. 

Finally, we want to highlight that this unknown relationship also leads to the squared factor in the relationship of $N$ and $M$ in \eqref{Eq:MainTheoAssumptionNM}. However, numerical examples in \S \ref{Ch:Numerics} suggest that this relation is not sharp. The experiments show that coefficients up to $N$ can be reconstructed, hence it is possible to reconstruct images with a reduced relation between the maximal sample and the maximal reconstructed coefficient. 
\end{remark}

\begin{figure}
	\centering
	\subfloat[Haar wavelets (Walsh)]{
		\includegraphics[width=0.32\textwidth]{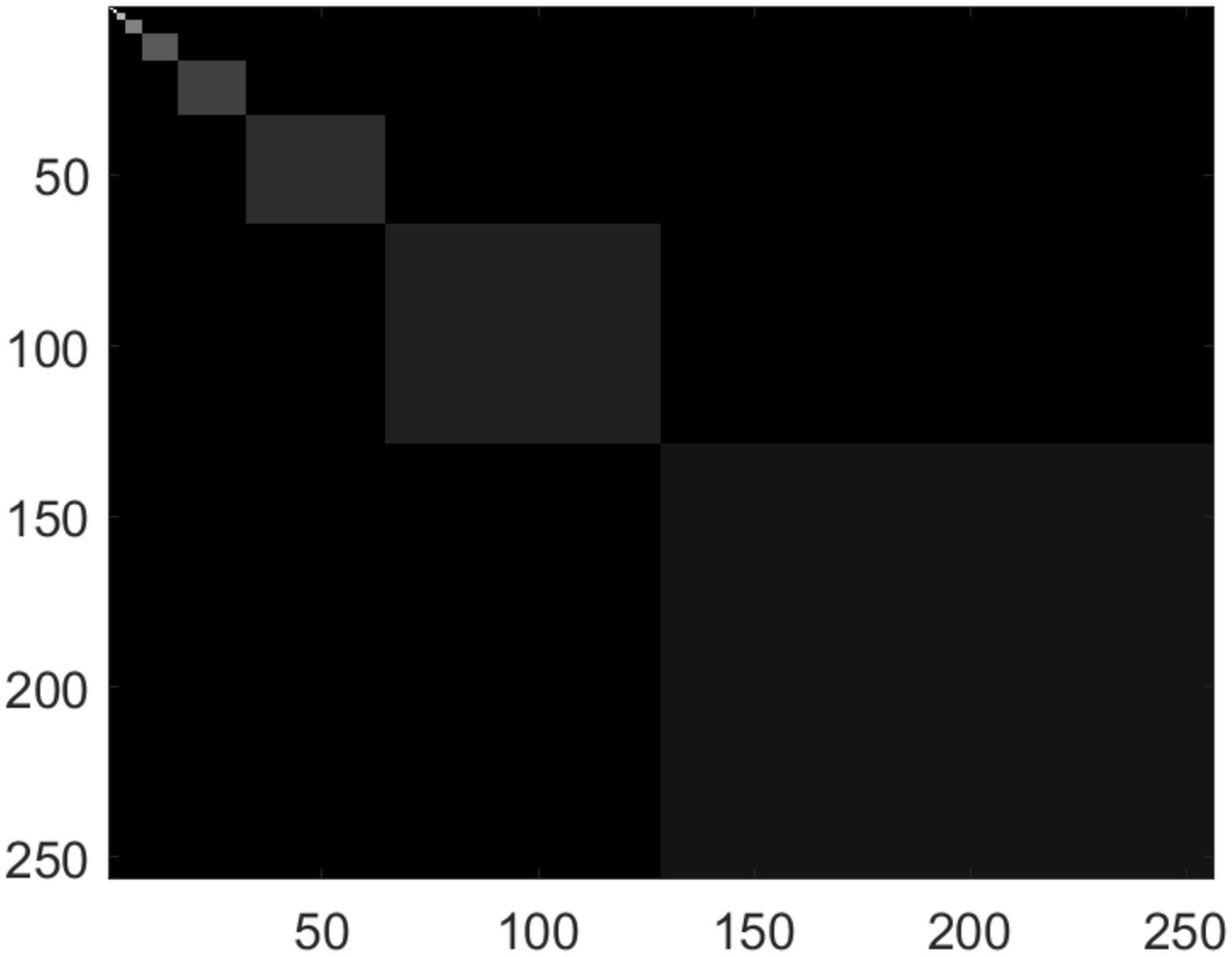}
		\label{fig:RecMatrixHaar}}
	\subfloat[$2$ vanish. mom. (Walsh)]{
		\includegraphics[width=0.32\textwidth]{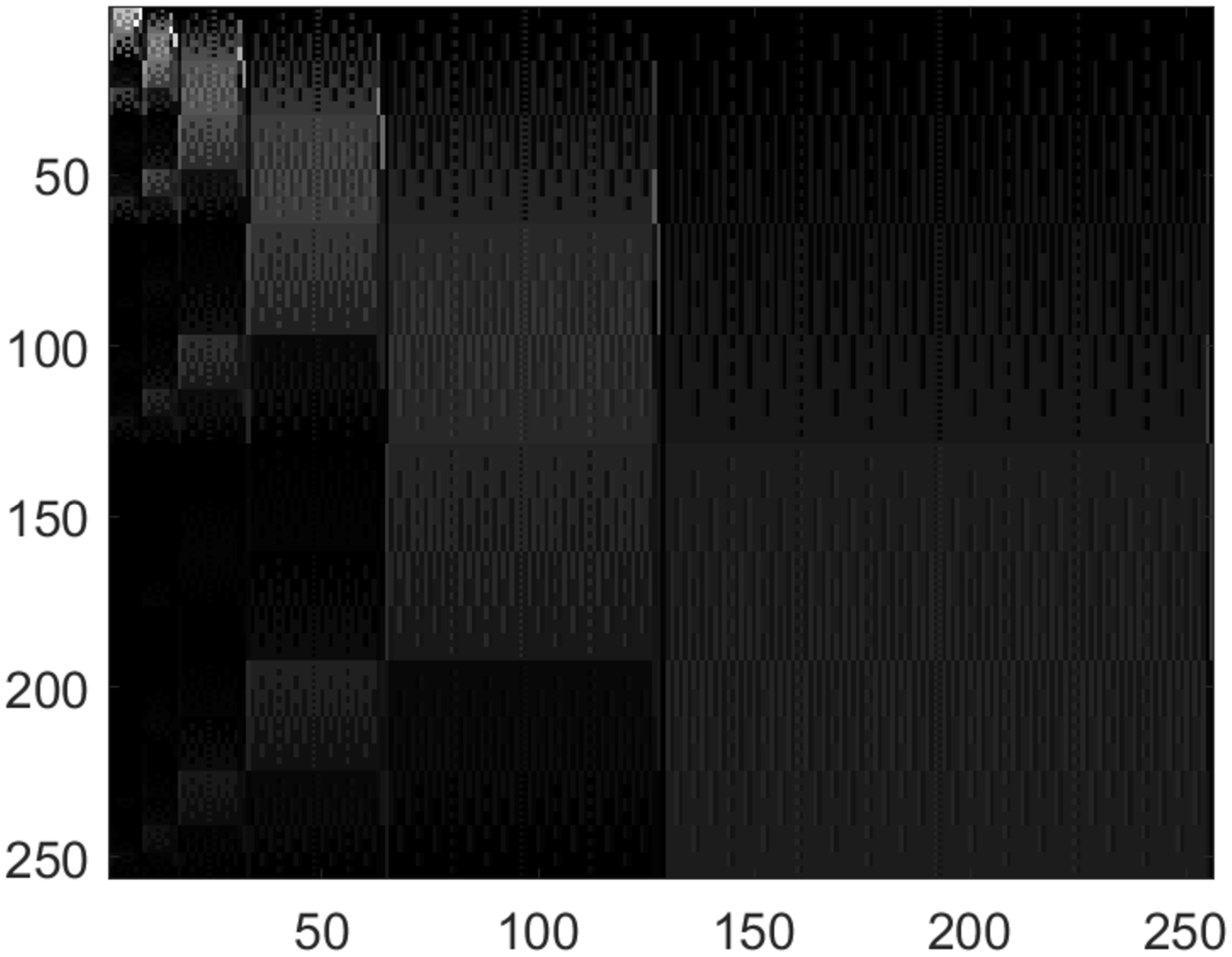}
		\label{fig:RecMatrix2}}
	\subfloat[$8$ vanish. mom. (Walsh)]{
		\includegraphics[width=0.32\textwidth]{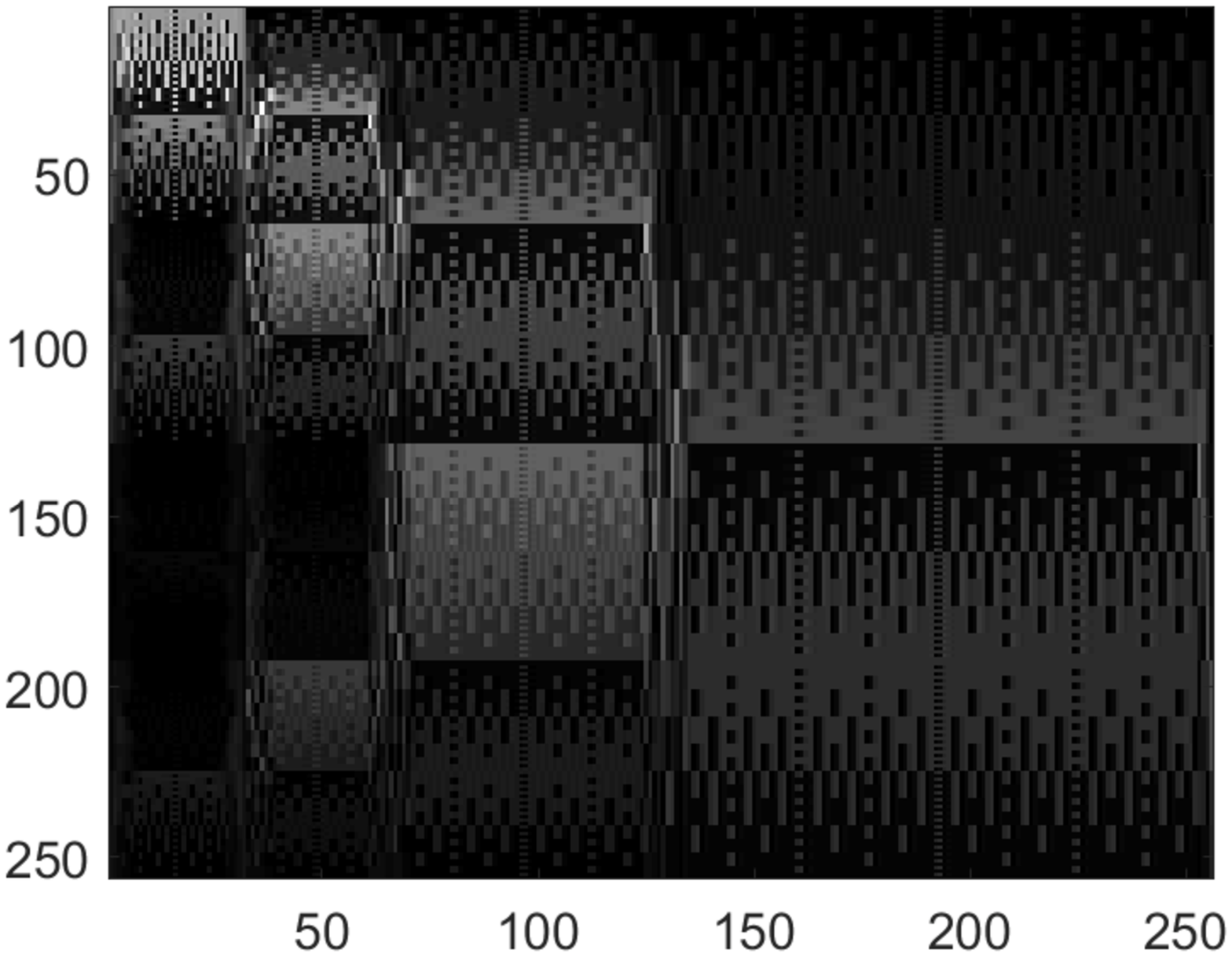}
		\label{fig:RecMatrix3}}
	
	\subfloat[Haar wavelets (Fourier)]{
		\includegraphics[width=0.32\textwidth]{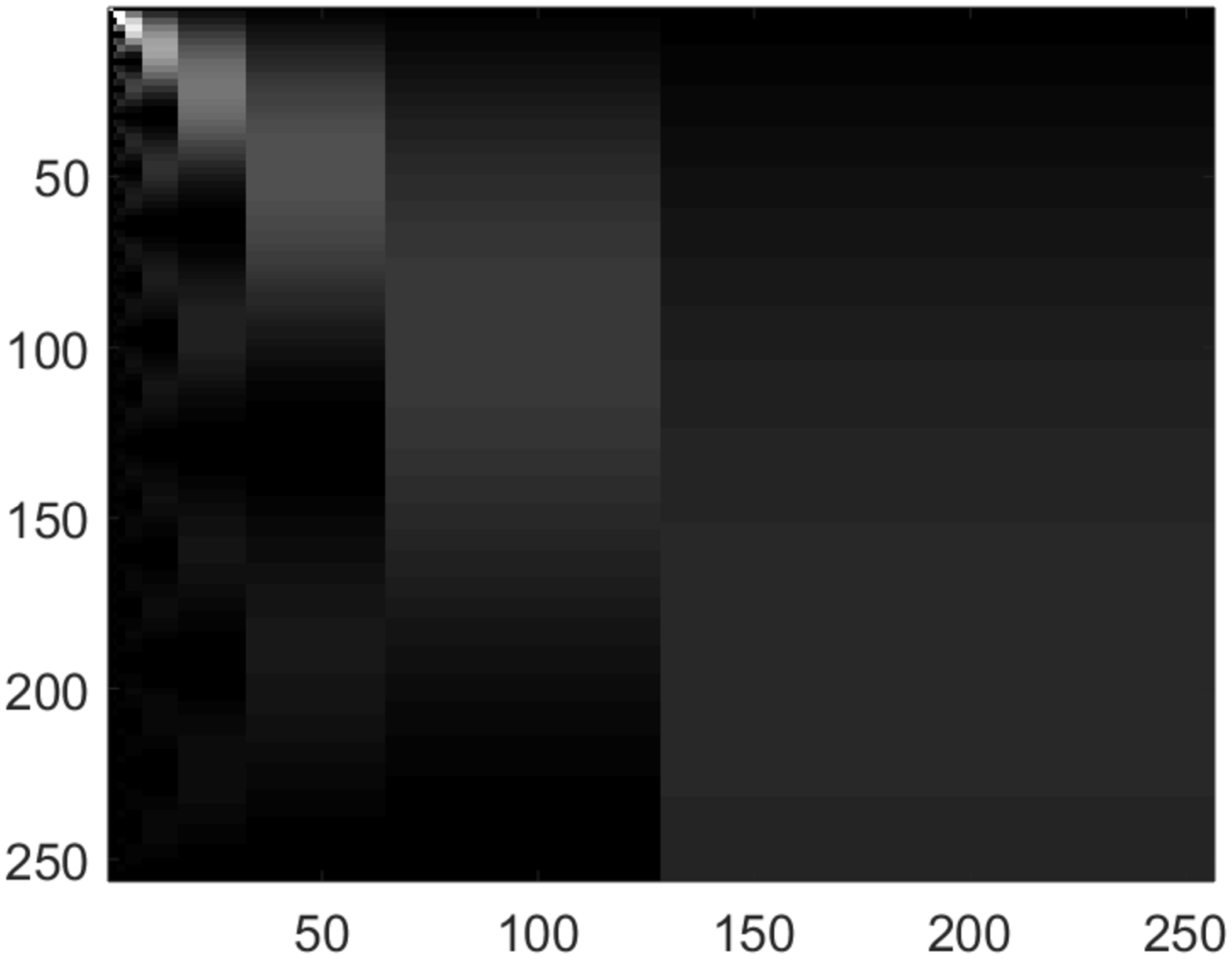}
		\label{fig:RecMatrixFourHaar}}
	\subfloat[$2$ vanish. mom. (Fourier)]{
		\includegraphics[width=0.32\textwidth]{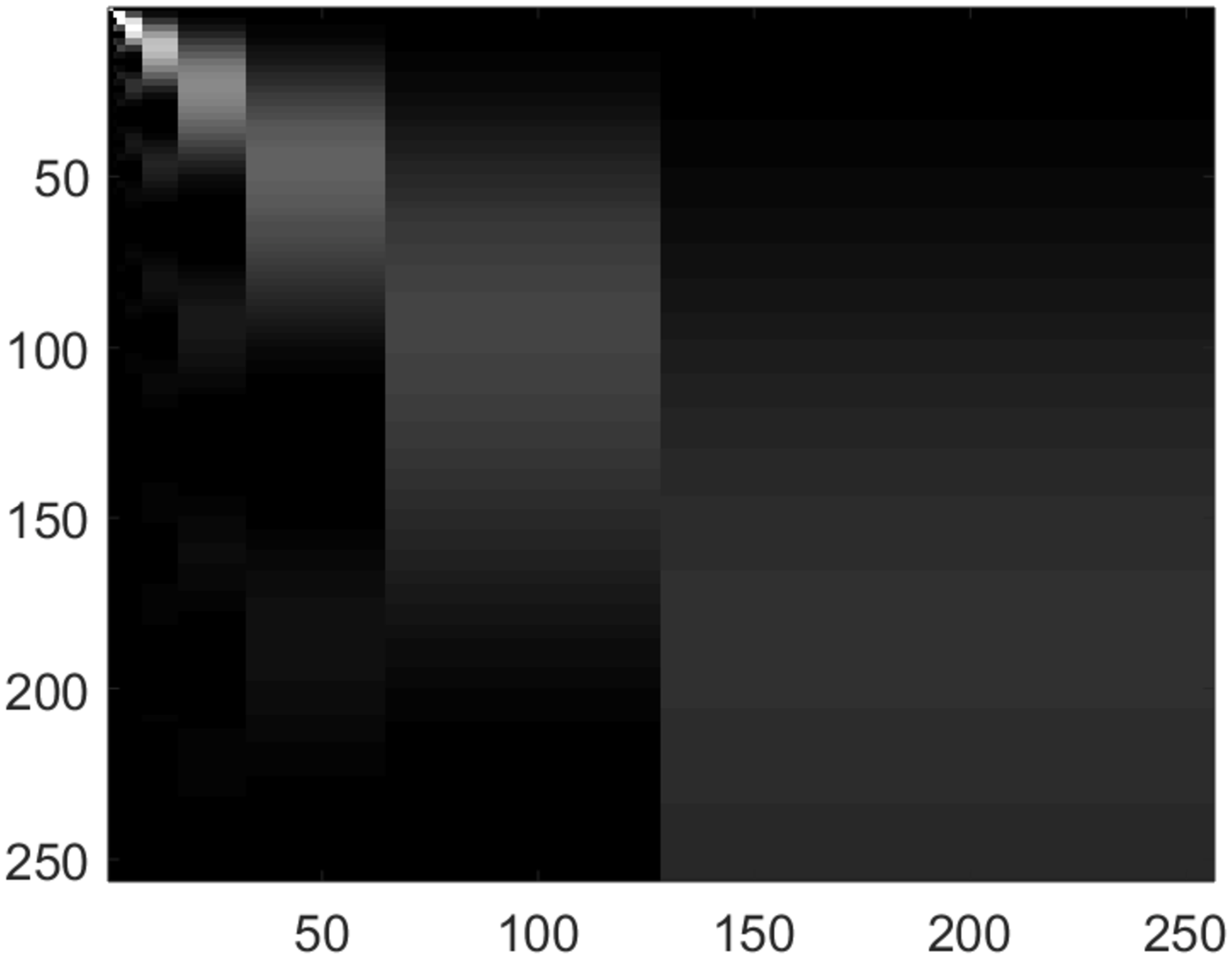}
		\label{fig:RecMatrixFour2}}
	\subfloat[$8$ vanish. mom. (Fourier)]{
		\includegraphics[width=0.32\textwidth]{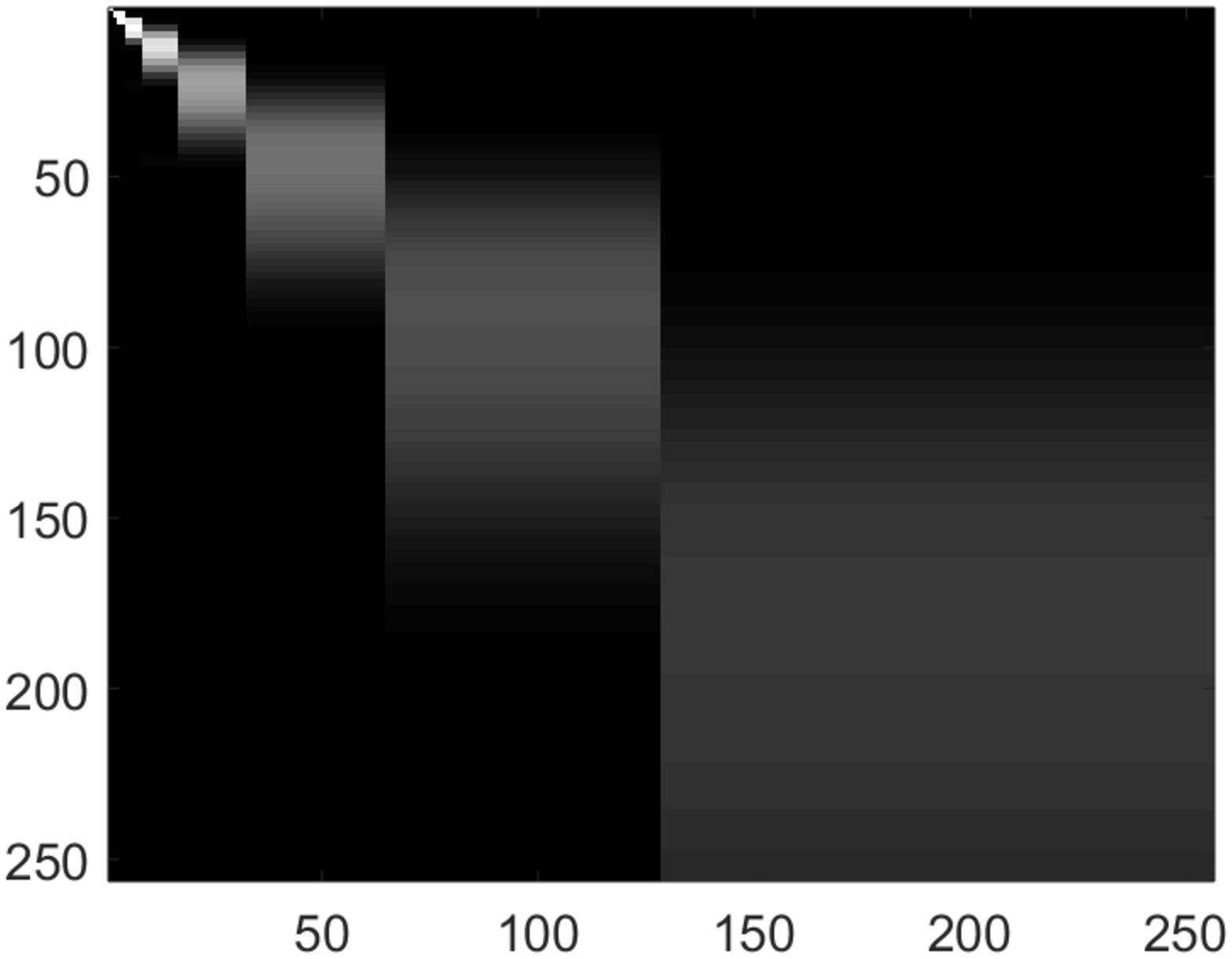}
		\label{fig:RecMatrixFour4}}
	\caption{Absolute values of $P_NUP_N$ with $N = 256$, where $U$ is the infinite matrix from \eqref{eq:the_U}, with boundary corrected Daubechies wavelets with different numbers of vanishing moments, and Walsh (upper row) and Fourier measurements (lower row). In the Fourier case, $U$ becomes more block diagonal as smoothness and the number of vanishing moments increase. This is not the case in the Walsh setting, suggesting that the non-dependence of smoothness and order (if higher than $p\geq 3$) in the estimate \eqref{Eq:MainTheoAssumption} is correct.}
	\label{fig:RecMatrix}
\end{figure}

\subsection{Connection to related works}

Reconstruction methods are mainly divided in two major classes of linear and non-linear methods. For the linear methods \emph{generalized sampling} \cite{GS} and the \emph{PBDW-method} \cite{PBDW} are prominent examples. 

Generalized sampling has evolved over time. The first closely related method is the \emph{finite section methods} introduced and analysed in \cite{FiniteSection4,FiniteSection1,FiniteSection3,FiniteSectionGroechnig}. This was further extended to \emph{consistent sampling} investigated by Aldroubi, Eldar, Unser and others \cite{SamplingTranslates, ConsistentSampling, RobustConsistentSampling, eldar2003FAA, RobustConsistentSampling3, RobustConsistentSampling4}. Finally generalized sampling has been studied by Adcock, Hansen, Hrycak, Gr\"ochenig, Kutyniok, Ma, Poon, Shadrin in  \cite{2DCase, GS, sharpBounds, PolySSR, Hansen_JAMS,AHPWavelet, Ma}. This works includes estimates for the general case of arbitrary sampling and reconstruction basis as well as more application focussed analysis. 

The PBDW-method evolved from the work of Maday, Patera, Penn and Yano in \cite{maday2} first under the name \textit{generalized empirical interpolation method}. This was then further analysed and extended to the PBDW-method by Binev, Cohen, Dahmen, DeVore, Petrova, and Wojtaszczyk \cite{deVore1, deVore2, deVore3,PBDW, maday3}. 

Both methods have in common that the relationship between the number of samples and reconstructed coefficients, the so called \emph{stable sampling rate} (SSR), controls the numerical stability and accuracy. The SSR has to be analysed for different application with their related sampling and reconstruction bases. It was shown that the SSR is linear for the Fourier-wavelet \cite{linearity}, Fourier-shearlet \cite{Ma} and Walsh-wavelet case \cite{WalshSSR}. However, this is not always the case as for the Fourier-polynomial situation \cite{PolySSR}.

In the non-linear setting the most prominent reconstruction technique is infinite-dimensional compressed sensing \cite{CS1} with its extension to structured CS as introduced in \cite{breaking}. Here, the analysis relies on the properties of the change of basis matrix and the sparsity of the signal. Early results have promoted random samples which have later been shown to be not as efficient for signals with a structured sparsity.

Similar to the linear methods the analysis for general sampling and reconstruction bases has been extended to the special applications. The Fourier case for different reconstruction systems has been analysed by Adcock, Hansen, Kutyniok, Lim, Poon and Roman \cite{breaking, KutyniokLimShearletFourier, GSinfCS, PoonFrames, Clarice}. Closely related to the recovery guarantees in this paper the following guarantees have been provided. For the Fourier wavelet case we know uniform recovery guarantees \cite{li2017uniformRecoveryWavFourier} and non-uniform recovery guarantees \cite{breaking,NoteOnHaarFourier}. For Walsh measurements we have coherence estimates by Antun \cite{Vegard}, uniform recovery guarantees from Adcock, Antun and Hansen \cite{VegardUniform} and an analysis for variable and multilevel density sampling strategies for the Walsh-Haar case and finite-dimensional signals by Moshtaghpour, Dias and Jacques in \cite{moshtaghpour2020close}. In this paper we present the non-uniform results for the Walsh-wavelet case as has been studied for the Fourier case in \cite{breaking,NoteOnHaarFourier}.

\subsection{Sampling and Reconstruction space}

\subsubsection{Sampling Space}\label{Ch:SamplingSpace}
We start with the sampling space. To model binary measurements Walsh functions have proven to be a good choice. They behave similar to Fourier measurements with the difference that they work in the dyadic rather than the decimal analysis. They also have an increasing number of zero crossing. This leads to the fact that the change of basis matrix $U$ gets a block diagonal structure, as can be seen in Figure \ref{fig:RecMatrix}. Moreover, it can be observed that $U$ is asymptotic incoherent. The incoherence of the matrix together with the asymptotic sparsity can be exploited in the reconstruction part. However, the fact that sampling functions are defined in the dyadic analysis leads to some difficulties and specialities in the proof.

Let us now define the Walsh functions, which form the kernel of the Hadamard matrix. Then we proceed with their properties and the definition of the Walsh transform.

\begin{definition}[\S 9.2 \cite{deutschWalsh}]
	Let $z =\sum_{i\in \mathbb{Z}} z_i 2^{i-1}$ with $n_i \in \left\{0,1 \right\}$ be the dyadic expansion of $z \in \mathbb{R}_+$. Analogously, let $x = \sum_{i \in \mathbb{Z}} x_i 2^{i-1}$ with the dyadic expansion $x_i \in \left\{ 0,1 \right\}$. The \emph{generalized Walsh functions} in $L^2([0,1])$ are given by 
	\begin{equation}
	\operatorname{Wal}(z,x) = (-1)^{\sum_{i \in \mathbb{Z}} (z_i + z_{i+1})x_{-i-1}}.
	\end{equation}
\end{definition}
This definition can also be extended to negative inputs by $\Wal(-z,x) = \Wal(z,-x) = -\Wal(z,x)$. Walsh functions are one-periodic in the second input if the first one is an integer. The first input $z$ is commonly denoted as \emph{parameter} or \emph{sequency} because of its relation to the number of zero crossings. For a fixed $z$ the function is then treated like a one-dimensional function with its input $x$ which could be interpreted as time as in the Fourier case.

The definition is extended to arbitrary inputs $z \in \R$ instead of the more classical definition for $z \in \N$. We would like to make the reader aware of different orderings of the Walsh functions. The one presented here is the \emph{Walsh-Kaczmarz} ordering in Figure \ref{fig:32Walsh}. It is ordered in terms of increasing number of zero crossings. This has the advantage that it relates nicely to the scaling of wavelets. Two other possible orderings are presented in \cite{deutschWalsh}. We have \emph{Walsh-Paley} in Figure \ref{fig:Walshdyad} with 
\begin{equation}
	\Wal_{\text{Pal}}(z,x) = (-1)^{\sum_{i \in \mathbb Z}z_ix_{-i}}
\end{equation}
and \emph{Walsh-Kronecker} in Figure \ref{fig:WalshHad} with
\begin{equation}
	\Wal_{\text{Kron}}(z,d,x) = (-1)^{\sum_{i=1}^{d}z_{d-i}x_{-i}}.
\end{equation}
Both have the drawback that the number of zero crossings is not increasing. This is the reason why we are not able to get the block diagonal structure in the change of basis matrix and hence do not get as much structure to exploit. The Walsh-Kronecker ordering has another disadvantage and is not often used in practice. The definition of the function includes knowledge about the length $d$ of the maximum sequence $z_{\max} \leq 2^d$ we are interested in. Depending on this value the function change also for smaller inputs $z \leq z_{\max}$, i.e. there is a third input $d$ which also leads to changes. Hence, in case one wants to change the number of samples and also increase the largest value for the samples all functions and measurements have to be recomputed, which is undesirable in practice, where measurements are expensive and time consuming. 

\begin{figure}
	\subfloat[Walsh-Kaczmarz 
	]{\includegraphics[width=0.31\textwidth]{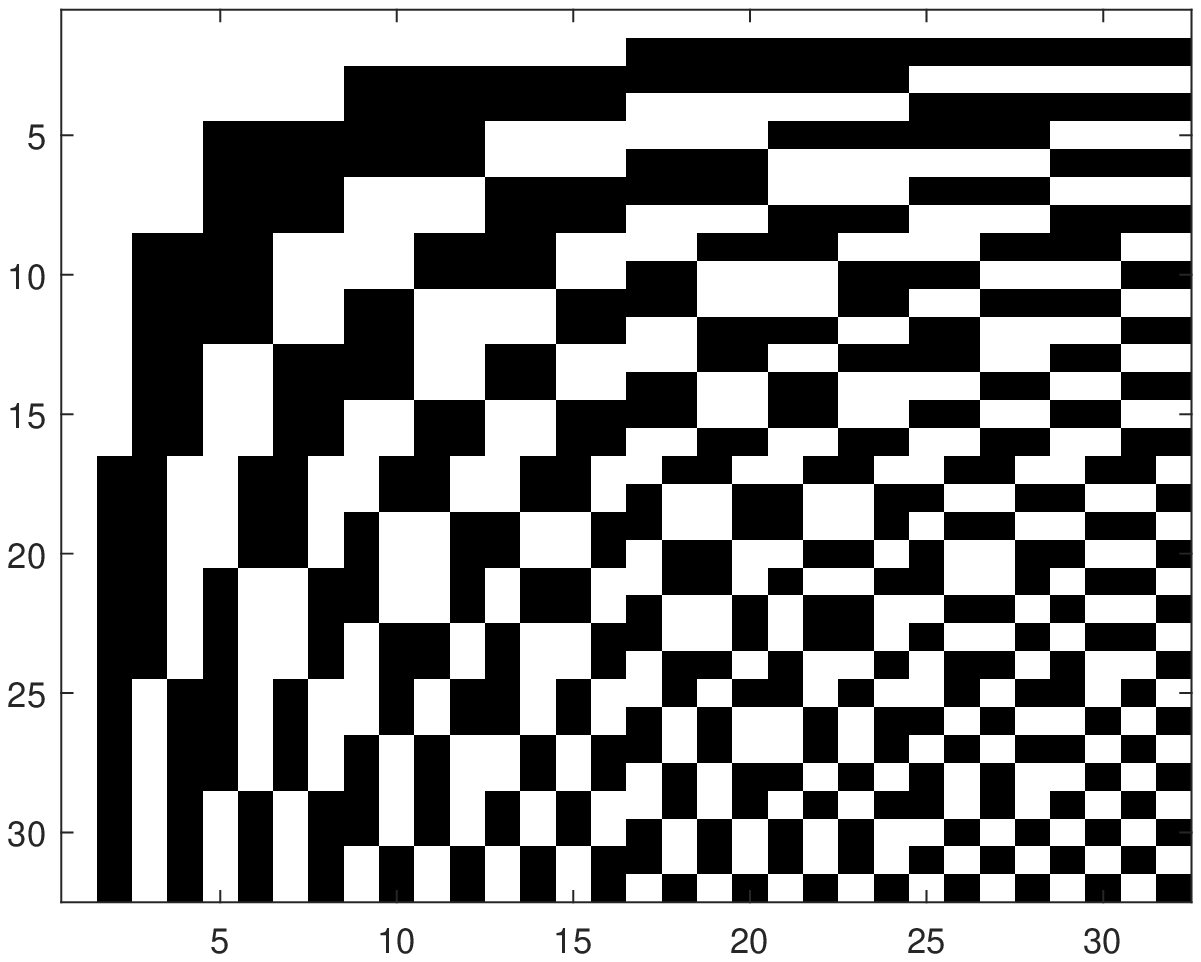}\label{fig:32Walsh}}
	\subfloat[Walsh-Paley 
	]{\includegraphics[width=0.31\textwidth]{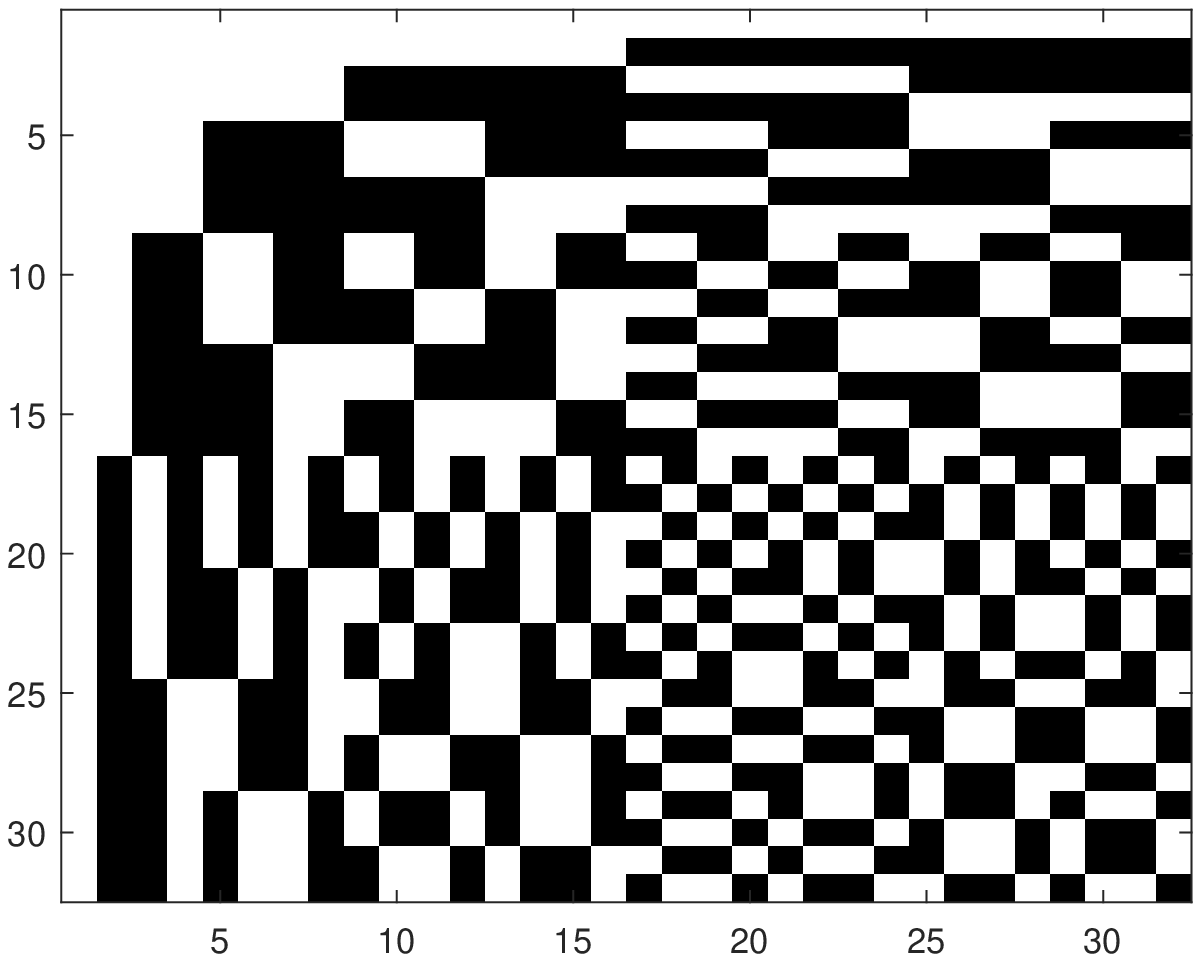}\label{fig:Walshdyad}}
	\subfloat[Walsh Kronecker
	]{\includegraphics[width=0.31\textwidth]{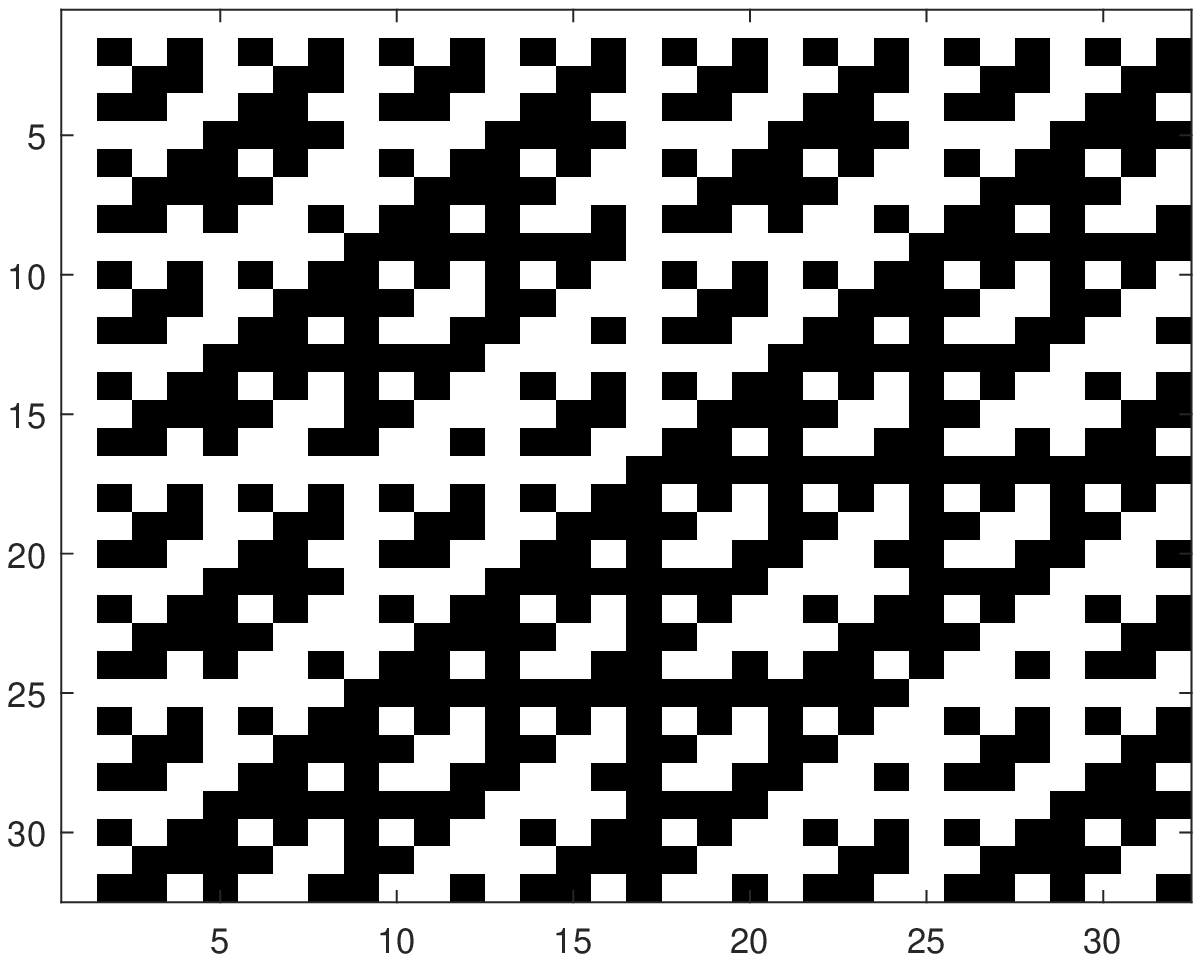}\label{fig:WalshHad}} 
	\caption{Different orderings of first $32$ Walsh functions}
	\label{fig:WalshOrderings}
\end{figure}

For the sampling pattern we divide the sequency parameter $z$ into blocks of doubling size. This is a natural division for two reasons. First, the size of the smallest constant interval decreases after every block. Second and more importantly, these blocks relate nicely into the level structure of the wavelets, discussed in the following chapter. We can see in Figure \ref{fig:RecMatrix} that the blocks relate nicely to the visible block structure of the change of basis matrix $U$. 

After the small excursion on orderings we now define the sampling space in one dimension by
\begin{equation}
	\mathcal{S} = \overline{\operatorname{span}} \left\{ \Wal(n,\cdot), n \in \mathbb N \right\},
\end{equation}
where $\overline{\operatorname{span}}$ denotes the closure of the set of linear combinations of the elements.
In general, it is not possible to acquire or save an infinite number of samples. Therefore, we restrict ourselves to the sampling space according to $\Omega_{N,m}$ or $\left\{1,\ldots,N\right\}$, i.e.
\begin{equation}
\mathcal{S}_{\Omega_{N,m}} = \operatorname{span} \left\{ \Wal(n,\cdot), n \in \Omega_{N,m} \right\} \quad \text{ or } \quad \mathcal{S}_{N} = \operatorname{span} \left\{ \Wal(n,\cdot), n \leq N \right\}.
\end{equation}

The Walsh functions obey some interesting properties which have been shown in \S 2.2. in \cite{WalshSSR}: the \emph{scaling property}, i.e.  $\Wal(2^jz,x) = \Wal(z,2^jx)$ for all $j \in \mathbb{N}$ and $n, x \in \mathbb{R}$ and the \emph{multiplicative identity}, i.e. $\Wal(z,x)\Wal(z,y) = \Wal(z,x \oplus y)$, where $\oplus$ is the dyadic addition.
With the Walsh functions we are able to define the continuous Walsh transform as a mapping from $L^1([0,1]) \mapsto L^1([0,1])$ by
\begin{equation}
\reallywidehat[W]{f}(z) = \langle f(\cdot), \Wal(z,\cdot) \rangle = \int_{[0,1]} f(x) \Wal(z,x)dx, ~ n \in \mathbb{R}.
\end{equation}
We will also use the notation $\mathcal W$ for the Walsh transform, similar to the Fourier operator $\mathcal F$, with 
\begin{equation}
	\mathcal W \left\{ f(x) \right\} (z) = \int_{[0,1]} f(x) \Wal(z,x)dx.
\end{equation} 
The properties from the Walsh functions are easily transferred to the Walsh transform.
We state now some more statements about the Walsh functions and the Walsh transform, which are necessary for the main proof.

\begin{lemma}[Cor. 4.3 \cite{WalshSSR}]\label{Cor:scalingProp}
	Let $t \in \mathbb{N}$ and $x \in [0,1)$, then the following holds:
	\begin{equation}
	\mathcal{W}\left\{f(x+ t) \right\} (s) = \mathcal{W}\left\{ f(x) \right\} (s) \Wal(t,s).
	\end{equation}
\end{lemma}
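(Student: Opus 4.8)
The plan is to run the standard proof of the translation--modulation duality, with the exponential character replaced by the Walsh character, exploiting the fact that for an \emph{integer} shift dyadic and ordinary addition agree on the relevant range.

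First I would write out the left-hand side from the definition of $\mathcal W$,
\[
\mathcal W\{f(x+t)\}(s)=\int f(x+t)\,\Wal(s,x)\,dx ,
\]
and substitute $u=x+t$. Since $t\in\N$ and the integration variable runs over $x\in[0,1)$, the new variable satisfies $u\in[t,t+1)$; thus the integer part of $u$ is exactly $t$ and its fractional part is $u-t\in[0,1)$. In the dyadic expansions $t$ carries only bits of weight $\ge 1$ whereas $u-t$ carries only bits of weight $<1$, so the two have disjoint binary supports, no carry occurs, and $u=t+(u-t)=t\oplus(u-t)$, where $\oplus$ is dyadic addition. Dyadically adding $t$ to both sides (and using $t\oplus t=0$) gives $u-t=u\oplus t$.

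Next I would apply the multiplicative identity $\Wal(s,a)\Wal(s,b)=\Wal(s,a\oplus b)$ (see \S2.2 of \cite{WalshSSR}) with $a=u$, $b=t$, so that $\Wal(s,u-t)=\Wal(s,u\oplus t)=\Wal(s,u)\Wal(s,t)$. Feeding this back in,
\[
\mathcal W\{f(x+t)\}(s)=\Wal(s,t)\int f(u)\,\Wal(s,u)\,du=\Wal(s,t)\,\mathcal W\{f(x)\}(s),
\]
and I would conclude by using the symmetry $\Wal(s,t)=\Wal(t,s)$ of the generalized Walsh functions, which follows from the defining formula after reindexing the exponent.

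The one point that needs care is the identity $u-t=u\oplus t$: it is exactly the hypotheses $t\in\N$ and $x\in[0,1)$ that keep us in the no-carry regime where dyadic and ordinary addition coincide. For a non-integer translation, or if the support of $f$ were such that the shift crossed a dyadic block boundary, this step would fail and the clean prefactor $\Wal(t,s)$ would have to be replaced by a more complicated, sign-dependent expression. A minor bookkeeping check is that the substitution $u=x+t$ leaves the value of the transform integral unchanged, which is immediate once one stays consistent with the convention for $\mathcal W$ fixed in \S\ref{Ch:SamplingSpace}.
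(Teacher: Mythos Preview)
The paper does not supply its own proof of this lemma; it is quoted verbatim as Corollary~4.3 from \cite{WalshSSR} and used as a black box in the proof of Lemma~\ref{Lem:EstimateSSR}. Your argument is correct and is the standard one: substitute, observe that the hypotheses $t\in\N$, $x\in[0,1)$ force the dyadic digits of $t$ and $u-t$ to be disjoint so that $u-t=u\oplus t$, invoke the multiplicative identity, and finish with the symmetry $\Wal(s,t)=\Wal(t,s)$, which indeed drops out of the defining exponent after the reindexing $j=-i-1$. Your closing caveat about the no-carry regime is exactly the point the paper makes in the remark immediately following the lemma statement.
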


Remark that this only holds because $x$ and $t$ do not have non-zero elements in their dyadic representation at the same spot and therefore the dyadic addition equals the decimal addition. Next, we consider Walsh polynomials and see how we can relate the sum of squares of the polynomial to the sum of squares of its coefficients.

\begin{lemma}[Lem. 4.6 \cite{WalshSSR}]\label{Lemma2DphiSum}
	Let $A,B \in \mathbb{Z}$ such that $A \leq B$ and consider the Walsh polynomial $\Phi(z) = \sum_{j=A}^{B} \alpha_{j} \Wal(j,z)$ for $z \in \mathbb{R}_+$. If $L = 2^{n}$, $n \in \mathbb{N}$ such that $2L \geq B - A +1$, then
	\begin{equation}
	\sum\limits_{j=0}^{2L-1} \frac 1 { 2 L} \left| \Phi(\frac j {2 L})\right|^2 = \sum\limits_{j=A}^{B} |\alpha_{j}|^2 .
	\end{equation}
\end{lemma}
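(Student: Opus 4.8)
The plan is to read the left-hand side as a squared norm on the finite dyadic grid $G_n := \{\, m/(2L) : 0 \le m \le 2L-1 \,\}$, $2L = 2^{n+1}$, equipped with the normalised counting measure $\langle u,v\rangle_n = \frac{1}{2L}\sum_{m=0}^{2L-1}u(m/(2L))\overline{v(m/(2L))}$, and then to show that $\{\,\Wal(j,\cdot) : A\le j\le B\,\}$ restricted to $G_n$ is an orthonormal system for $\langle\cdot,\cdot\rangle_n$; the claimed identity is then Parseval's relation. Using $\Wal(-z,x)=-\Wal(z,x)$ one may assume $A\ge 0$: if $B<0$ replace each $\Wal(j,\cdot)$ by $-\Wal(-j,\cdot)$, changing neither side, and the remaining case $A<0\le B$ does not arise in the applications, where all sequencies are nonnegative.

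For the diagonal terms this is immediate: the multiplicative identity $\Wal(z,x)\Wal(z,y) = \Wal(z,x\oplus y)$ with $x=y$ gives $\Wal(j,x)^2 = \Wal(j,0) = 1$ for all $x$, hence $\langle\Wal(j,\cdot),\Wal(j,\cdot)\rangle_n = 1$; here one also uses that Walsh functions are real-valued, so that $\Phi$ may have complex coefficients while $|\Phi|^2 = \Phi\,\overline{\Phi}$ expands into $\sum_{j,k}\alpha_j\overline{\alpha_k}\,\Wal(j,\cdot)\Wal(k,\cdot)$.

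The substance is the off-diagonal vanishing $\langle\Wal(j,\cdot),\Wal(k,\cdot)\rangle_n = 0$ for $j\neq k$ in $\{A,\ldots,B\}$. The mechanism is that on a dyadic grid of mesh $2^{-(n+1)}$ the sampled value $\Wal(j,m/(2L))$ depends on the sequency $j$ only through its reduction modulo $2L$: writing $j = 2^{n+1}q + i$ with $0\le i < 2^{n+1}$, the scaling property $\Wal(2^{n+1}q,x) = \Wal(q,2^{n+1}x)$ (see \S\ref{Ch:SamplingSpace}) gives $\Wal(2^{n+1}q,m/(2L)) = \Wal(q,m) = 1$ by one-periodicity in the second variable, and the behaviour of Walsh functions under dyadic addition of sequencies then reduces $\Wal(j,\cdot)\big|_{G_n}$ to $\Wal(i,\cdot)\big|_{G_n}$ (exactly so in the Walsh--Paley ordering). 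Since the $2L$ functions $\Wal(i,\cdot)\big|_{G_n}$, $0\le i<2L$, are constant on the grid intervals and constitute, up to a permutation of rows and columns, the rows of the $2L\times 2L$ Walsh--Hadamard matrix, they are pairwise $\langle\cdot,\cdot\rangle_n$-orthogonal; and because $B - A + 1 \le 2L$, the residues of $A,\ldots,B$ modulo $2L$ are pairwise distinct, so $\{\Wal(j,\cdot)\big|_{G_n}:A\le j\le B\}$ is orthonormal. Expanding $|\Phi(m/(2L))|^2$ as above, summing over $m$, dividing by $2L$ and inserting the orthonormality relations yields $\sum_{j=A}^{B}|\alpha_j|^2$.

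I expect the delicate point to be exactly this reduction modulo $2L$ when $B \ge 2L$, that is, when some $\Wal(j,\cdot)$ genuinely oscillates on a scale finer than the grid and the sampled functions are subject to aliasing; one must then use $B - A + 1 \le 2L$ carefully to exclude two sequencies in the window being sampled to the same vector. In the Walsh--Kaczmarz ordering employed here this also interacts with the bit-permutation relating it to the Walsh--Paley ordering inside each dyadic block $[2^p,2^{p+1})$, so the cleanest route is to prove ``$\Wal(j,\cdot)\big|_{G_n}$ depends on $j$ only through its reduction modulo $2L$'' as a standalone lemma, using only the scaling property, the multiplicative identity and one-periodicity, after which the whole statement follows by the one-line Parseval computation above.
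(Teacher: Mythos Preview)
The paper does not prove this lemma; it is quoted from \cite{WalshSSR} (Lemma~4.6 there) and used as a black box in the proof of Lemma~\ref{Lem:EstimateSSR}. There is therefore no proof in the present paper to compare your argument against.

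That said, your approach---reading the left-hand side as a squared $\langle\cdot,\cdot\rangle_n$-norm and reducing to orthonormality of $\{\Wal(j,\cdot)|_{G_n}:A\le j\le B\}$---is the natural one, and the diagonal part is indeed immediate. Your instinct that the real content lives in the aliasing behaviour when $B\ge 2L$ is correct. One word of caution on the ``standalone lemma'' you propose: the statement ``$\Wal(j,\cdot)|_{G_n}$ depends on $j$ only through $j\bmod 2L$'' is literally true in the Walsh--Paley ordering but does \emph{not} carry over verbatim to the Walsh--Kaczmarz ordering used here, because the Gray-code bijection relating the two orderings acts block-by-block on $[2^p,2^{p+1})$ and the block index $p$ itself changes with $j$. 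What \emph{is} true and suffices is that the $2L$ sampled vectors $\{\Wal(i,\cdot)|_{G_n}:0\le i<2L\}$ form, up to row permutation, the $2L\times 2L$ Hadamard matrix, and that any $\Wal(j,\cdot)|_{G_n}$ with $j\ge 2L$ coincides with exactly one of them. Combined with $B-A+1\le 2L$, this gives the orthonormality you need---but phrase the auxiliary lemma as ``the sampled vectors for $j\in\{A,\ldots,B\}$ are pairwise distinct Hadamard rows'' rather than via the modular reduction, or else pass through the Paley ordering explicitly.
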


In the proof of Lemma \ref{Lem:EstimateSSR} we analyse the inner product of the wavelets with the Walsh function. For this we will combine the shifts in the wavelet into a Walsh polynomial. With this lemma at hand this is then easily bounded.

\subsubsection{Reconstruction Space}\label{sec:wavelets}

Next, we define the reconstruction space. As we are mainly interested in the reconstruction of natural signals in one dimension, we use Daubechies wavelets \cite{WaveletDef} and their boundary corrected versions \cite{boundaryWavelets}. They provide good smoothness and support properties. Moreover, they obey the Multi-resolution analysis (MRA). This results in the fact that the coefficients of natural signals obey a special sparsity structure with a lot of coefficients in the first part and fewer non-zero elements in the later coefficients. 

We start with the definition of classical Daubechies wavelet and then discuss the restriction to boundary corrected ones. The wavelet space is described by the wavelet $\psi$ at different levels and shifts $\psi_{j,m}(x) = 2^{j/2} \psi(2^j x -m )$ for $j, m \in \mathbb{N}$, i.e. we have the wavelet space at level $j$
\begin{equation}
	W_j := \operatorname{span}\left\{ \psi_{j,m} ,m \in \mathbb{N} \right\}.
\end{equation}
They build a representation system for $L^2(\mathbb{R})$, i.e. $\overline{\bigcup_{j\in \mathbb N}} W_j = L^2(\mathbb R)$. For the MRA we also define the scaling function $\phi$ and the according scaling space
\begin{equation}
	V_j = \operatorname{span}\left\{ \phi_{j,m}, m \in \mathbb N \right\},
\end{equation}
where $\phi_{j,m} (x) = 2^{j/2} \phi (2^j x - m)$. We have that $V_j = V_{j-1} \oplus W_{j-1}$ and $L^2(\mathbb R) = \operatorname{closure} \left\{ V_{J} \oplus \bigcup_{j \geq J} W_{j} \right\}$. The Daubechies scaling function and wavelet have the same smoothness properties. Therefore, they also have the same decay rate under the Walsh transform. The decay rate is of high importance for the analysis of the properties of the change of basis matrix.

The classical definition of Daubechies wavelets has a large drawback for our setting. Normally, they are defined on the whole line $\R$. Due to the fact that Walsh functions are defined on $[0,1]$ it is necessary to restrict the wavelets also to $[0,1]$. Otherwise there will be elements in the reconstruction space which are not in the sampling space and therefore the solution could not be unique. Hence, we are using boundary corrected wavelets (\S 4 \cite{boundaryWavelets}).  

For the definition of boundary wavelets, we have to correct those that intersect with the boundary. We start with the definition of the scaling space and continue with the wavelet space. For the discussion we consider for now the adaptation for the left boundary zero. If we remove all scaling functions which intersect with zero, the new function set does not even represent polynomial functions. Therefore, we have added the following functions.
\begin{equation}
\widetilde{\phi}^{\text{left}}_n(x) = \sum\limits_{l = 0}^{2p-2}  \binom{l}{n} \phi ( x + l - p +1) .
\end{equation}
These functions together with the translates of the scaling function with support on the positive line span all polynomials with degree smaller or equal to $p-1$ on $[0,\infty)$, where $p$ is the order of the scaling function. Next, we do the analogue for the right boundary. For this sake we first construct the functions for $(-\infty,0]$ simply by mirroring the $\widetilde{\phi}^{\text{left}}_n(x)$. To have the discussion on $[0,1]$ we shift the function to the right, such that we get
\begin{equation}
	\widetilde{\phi}_n^{\text{right}}(x) = \widetilde{\phi}_{-1-n}^{\text{left}}(-x) .
\end{equation}

Now, consider the lowest level $J_0$ such that the scaling functions do only intersect with one boundary $0$ or $1$, i.e. $2^{J_0} \geq 2p-1$. Then the interior scaling functions together with the the newly defined functions span $L^2([0,1])$. However, they are not orthogonal. Therefore, we apply the Gram-Schmidt procedure and obtain the function $\phi^{\text{right}}$ and $\phi^{\text{left}}$. The new functions have staggered support and a maximal support size of $2p-1$ and hence still have the desired property of a small support. The new function system contains at every level $2^j+2$ functions. However, in most applications it is desirable to only have $2^j$ many. Therefore, we remove the two outermost inner scaling functions. The new scaling space at level $j$ is given by:

\begin{equation}
V_j^b = \operatorname{span} \left\{ \phi_{j,n}^b : n=0, \ldots 2^j-1 \right\},
\end{equation}
where
\begin{align} \label{EqBoundScaling}
\phi_{j,n}^b(x) = \begin{cases}
2^{j/2} \phi^{\text{left}}_n (2^j x)  &  n=0,\ldots p-1 \\
2^{j/2} \phi_n (2^j x) & n = p,\ldots 2^j - p -1 \\
2^{j/2} \phi_{2^j -n-1} ^{\text{right}} (2^j (x-1)) & n = 2^j -p, \ldots 2^j -1 .
\end{cases}
\end{align}

The new system still obeys the MRA. Hence, the boundary wavelets can be deduced from the boundary corrected scaling functions as 
\begin{equation}
W_j^b = V_{j+1}^b \cap (V_j^b)^\perp .
\end{equation}
Fortunately, we only need the smoothness properties of the wavelet, especially only the knowledge if the function is Lipschitz continuous, for the decay rate under the Walsh transform in Lemma \ref{Lemma:Decayrate}. This is preserved also after the modification to the boundary corrected wavelets. The boundary wavelet will be denoted by $\psi^b$ and $\psi_{j,m}^b(x) = 2^{j/2} \psi^b (2^j x -m)$. Because we will use in the analysis the MRA property and hence replace the elements from the reconstruction space by the scaling function, as presented in the next paragraph, we do not need the details about the construction of the wavelet. The interested reader should seek out for \cite{boundaryWavelets} for a detailed analysis.

We now want to consider the reconstruction space. Let $R \in \N$ and $M=2^R$ than we have that the reconstruction space of size $M$ is given by
\begin{equation}\label{Eq:RecSpace1}
	\RM = V_{J_0}^b \oplus W_{J_0}^b \oplus \ldots \oplus W_{R-1}^b = V_R^b
\end{equation} 
This representation with the scaling function and wavelet suggests the ordering in different level according to the index $j$ in the next section \ref{Ch:Ordering}.

It was proved in \cite{boundaryWavelets} that $V_R$ can be spanned by the scaling function, its translates and the reflected version $\phi^\#(x) = \phi(-x+1)$, i.e.
\begin{equation}\label{Eq:RecSpace2}
V_R = \operatorname{span} \left\{\phi_{R,m}, m = 0, \ldots , 2^j - p -1, \phi_{R,m}^\#, m = 2^j-p, \ldots, 2^j -1 \right\}.
\end{equation}
With this representation of the reconstruction space we are able to present $\varphi \in \RM$ with $||\varphi||_2 =1$ as
\begin{equation}\label{Eq:repvarphi1d}
\varphi = \sum\limits_{n=0}^{2^R-p-1} \alpha_k \phi_{R,n} + \sum\limits_{n = 2^R -p}^{2^R-1} \beta_k \phi_{R,n}^\# \text{ with } \sum\limits_{n=0}^{2^R-p-1} |\alpha_n|^2 + \sum\limits_{n = 2^R -p}^{2^R-1} |\beta_n|^2= 1 .
\end{equation}

\begin{remark}
	We consider here only the case of Daubechies wavelets of order $p \geq 3$ and $p=1$, i.e. the Haar wavelet. The theory also holds for the case for order $p=2$. Nevertheless, we get unpleasant exponents $\alpha$ depending on the wavelet and different cases to consider. The results do not improve with more smoothness for the higher order wavelets. In contrast for the Haar wavelet, we can get even better estimates due to the perfect block structure of the change of basis matrix in that case. A detailed analysis of the relation between Haar wavelets and Walsh functions can be found in \cite{WalshHaar} and we discuss the recovery guarantees for this special case in \S \ref{Ch:WalshHaar}.
\end{remark}

For the evaluation of the change of basis matrix we investigate its elements, i.e. the inner product between the Walsh function and wavelet or scaling function, respectively. To ease this analysis we use the reconstruction space representation as in \eqref{Eq:RecSpace2}. This reduces the analysis to the scaling function. However, to avoid a lot of different cases we aim to take the shifts out of the inner product. To do this we will introduce Corollary \ref{Cor:scalingProp}.
However, in the assumptions we have that $t \in \N$ and $x \in (0,1]$. And because we also transfer the scaling factor $2^R$ out of the scaling function. The scaling function in level $0$ has that its support is larger than $[0,1]$. Therefore, Lemma \ref{Cor:scalingProp} could not be used, i.e.

\begin{align}
\int\limits_{2^{-R}(n-p+1)}^{2^{-R}(n+p)} 2^{R/2} \phi (2^R x - n)\Wal( k, x)dx 
& = 2^{-R/2} \int\limits_{-p+1}^{p} \phi(x)\Wal( k, 2^{-R}(x +n))dx \\
& \neq 2^{-R/2} \int\limits_{-p+1}^{p} \phi(x)\Wal( k, 2^{-R}(x \oplus n))dx.
\end{align}

Therefore, we have to separate the scaling function into parts which have support in $[0,1]$. Remark that this is not a contradiction to the construction of the boundary wavelets. They are indeed supported in $[0,1]$. However, only from the beginning of the scaling $J_0$ and not the scaling function at level $0$. Therefore, we use the representation of the scaling function at level $0$ from \cite{WalshSSR} as
\begin{equation}\label{phii}
\phi(x) = \sum\limits_{i=-p+2}^p \phi_i (x-i+1) \text{ with } \phi_i(x) = \phi(x+i-1) \mathcal{X}_{[0,1]}(x)
\end{equation}
and
\begin{equation}
\phi_{R,n} = 2^{R/2}\sum\limits_{i=-p+2}^p \phi_i(2^Rx-i+1-n) .
\end{equation}
This can also be done accordingly for the reflected function $\phi^\#$. 
More detailed information about this problem can be found in \cite{WalshSSR}.

\subsubsection{Ordering}\label{Ch:Ordering}

\begin{figure}
	\includegraphics[width=0.5\textwidth]{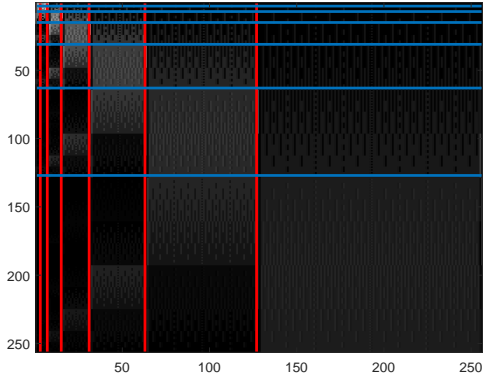}
	\caption{Blocks for the ordering of Walsh functions in blue and wavelets in red}
	\label{fig:OrderingBlocks}
\end{figure}

We are now discussing the ordering of the sampling and reconstruction basis functions. We order the reconstruction functions according to the levels, as in \eqref{Eq:RecSpace1}. With this we get the multilevel subsampling scheme with the level structure. For this sake, we bring the scaling function at level $J_0$ and the wavelet at level $J_0$ together into one block of size $2^{J_0+1}$. The next level constitutes of the wavelets of order $J_0+1$ of size $2^{J_0+1}$ and so forth. Therefore, we define 
\begin{equation}\label{Eq:M}
\mathbf{M} = (M_0, M_1, \ldots, M_r) = (0,2^{J_0+1}, 2^{J_0+2}, \ldots, 2^{J_0+r})
\end{equation}
to represent the level structure of the reconstruction space.
For the sampling space we use the same partition. We only allow by the choice of $q \geq 0$ oversampling. Let
\begin{equation}\label{Eq:N}
\mathbf{N} = (N_0, N_1, \ldots, N_{r-1}, N_r) = (0, 2^{J_0+1}, 2^{J_0 +2}, \ldots, 2^{J_0+r-1}, 2^{J_0+r+q}).
\end{equation}
We then get for the reconstruction matrix $U$ in \eqref{eq:the_U} with $u_{i,j} = \langle \omega_i, \varphi_j \rangle$ that $\omega_j(x) = \Wal(j,x)$ and for the first block we have 
\begin{align}
	\varphi_i &= \phi_{J_0,i} \text{ for } i=0, \ldots, 2^{J_0}-p-1 \text{ and } \\
	\varphi_i &= \phi_{J_0,i}^\# \text{ for } i = 2^{J_0}-p, \ldots, 2^{J_0}-1.
\end{align}
For the next levels, i.e. for $i \geq 2^{J_0}$ we get for $l$ with $2^l \leq i <2^{l+1}$ and $m = i -2^l$ that $\varphi_i = \psi_{l,m}^b$.

The proof of the main theorem relies mainly on the analysis of the change of basis matrix. Numerical examples and rigour mathematics \cite{WalshHaar} show that it is perfectly block diagonal for the Walsh-Haar case. And it is also close to block diagonality for other Daubechies wavelets, which can be seen in Figure \ref{fig:RecMatrix}. We highlight the different parts of the change of basis matrix with respect to the ordering in Figure \ref{fig:OrderingBlocks}. 

An intuition about this phenomena is given in Figure \ref{fig:cancelation}. We plotted Haar wavelets at different scales with Walsh functions at different sequencies. In \ref{fig:cancelation1} the scaling of the Haar wavelet is higher than the sequency of the Walsh function. Therefore, the Walsh function does not change the wavelet on its support and hence it integrates to zero. The next one \ref{fig:cancelation2} shows a wavelet and Walsh function at similar scale and sequency which relates to parts of the change of basis function in the inner block. Here, the two functions multiply nicely to get a non-zero inner product. Last, we have in \ref{fig:cancelation3} that the Walsh functions oscillate faster than the wavelet and hence the Walsh function is not disturbed by the wavelet and can integrate to zero.

\begin{remark}
	The main theorem only holds for the case of one dimensional signals. One reason why it is difficult to extend the results to higher dimensions is the choice of the ordering. It is possible to use tensor products of the one dimensional basis functions to get a basis in higher dimensions. However, this includes that we have to tensor faster oscillating functions with slower oscillating ones. As a consequence one has an exponentially increasing number of non-zero or slow decaying coefficients to consider. This makes the analysis very cumbersome and probably also results in the curse of dimensionality in terms of the relationship of samples and reconstructed coefficients.
\end{remark}

\begin{figure}
	\centering
	\subfloat[Faster oscillating wavelet with $n=5, j=3, m=1$]{\includegraphics[width=0.32\textwidth]{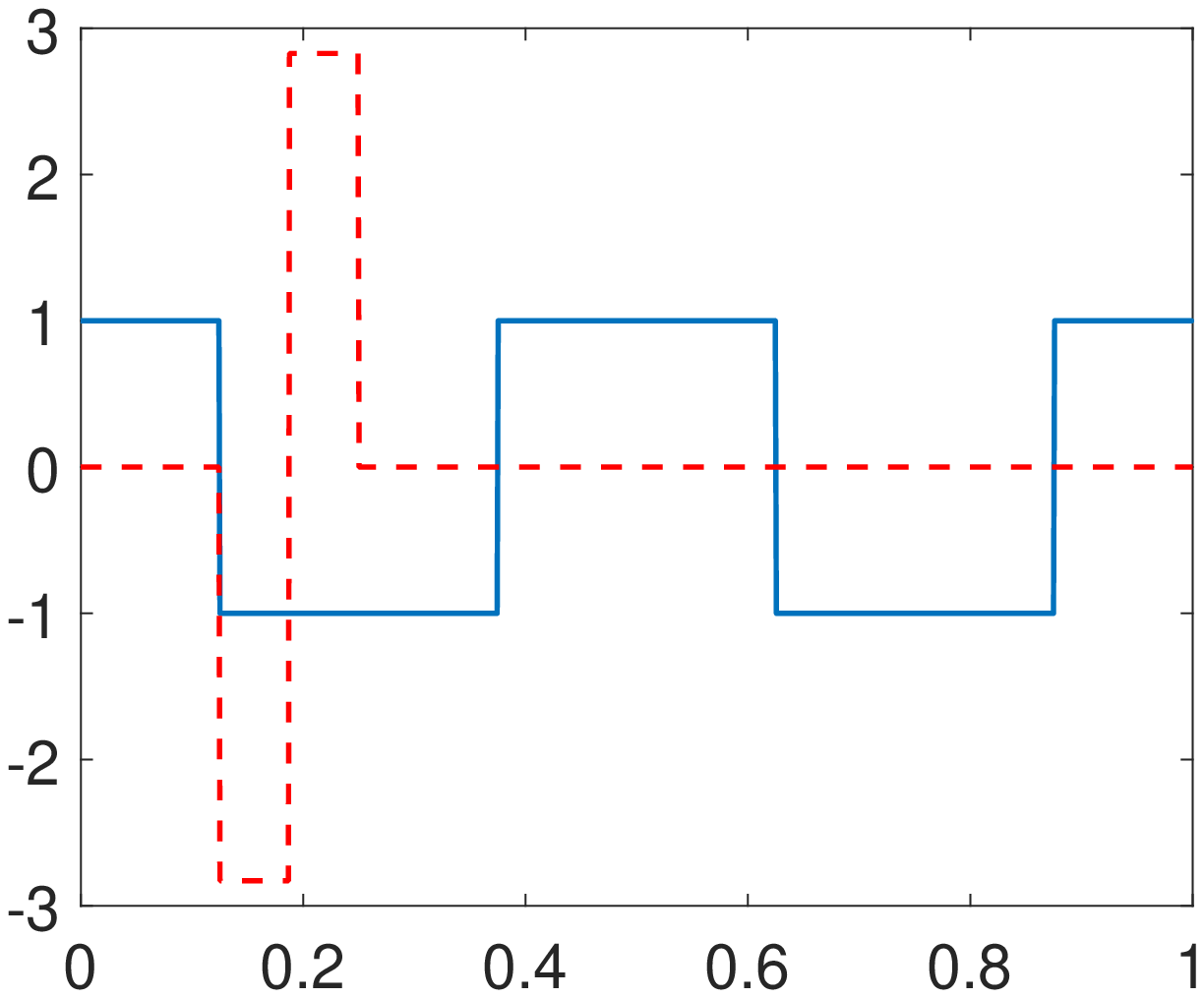}\label{fig:cancelation1}}
	\subfloat[Same oscillation with $n=9, j=3, m=1$]{\includegraphics[width=0.32\textwidth]{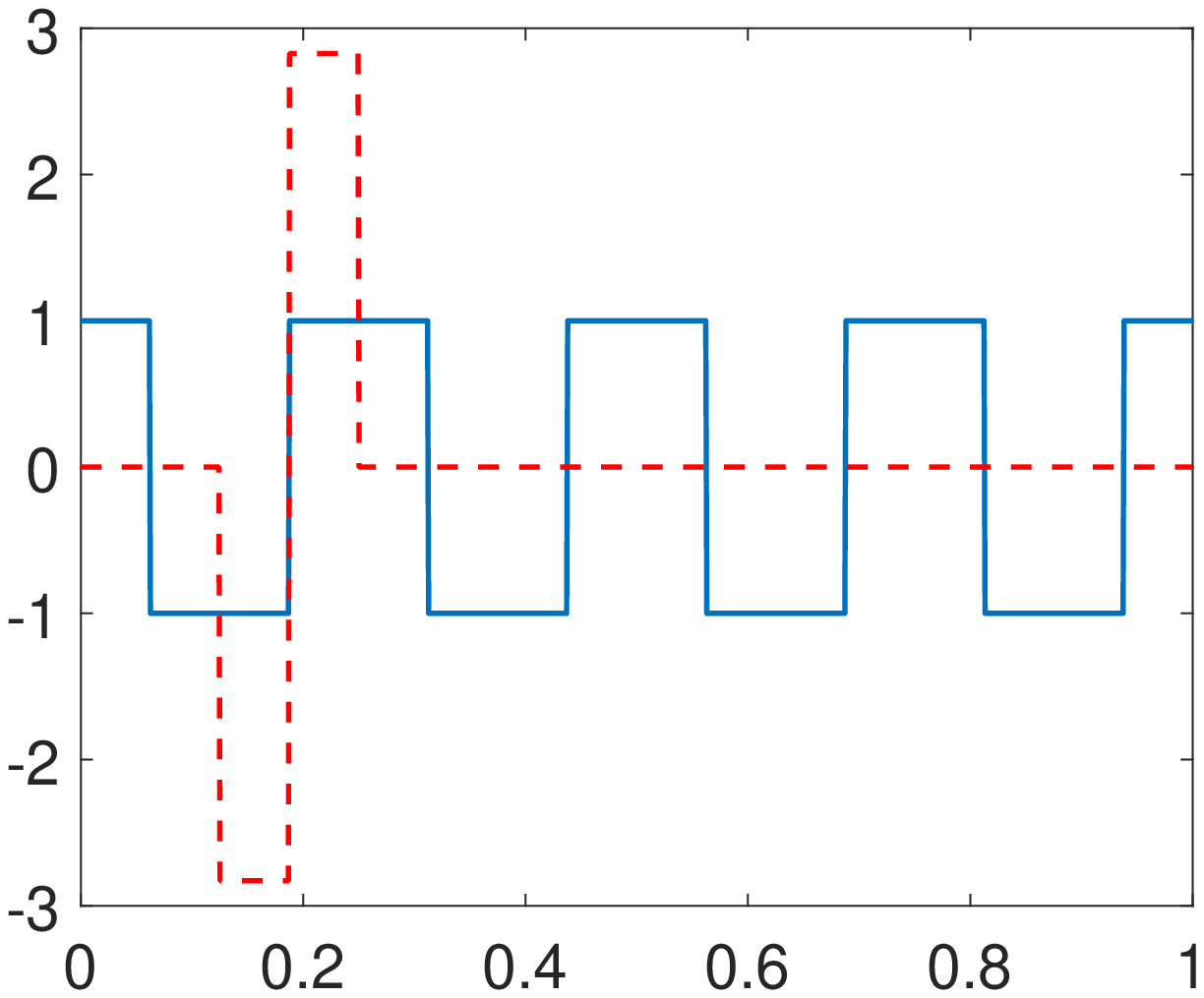}\label{fig:cancelation2}}
	\subfloat[Faster oscillating Walsh function with $n=9, j=2, m=1$ ]{\includegraphics[width=0.32\textwidth]{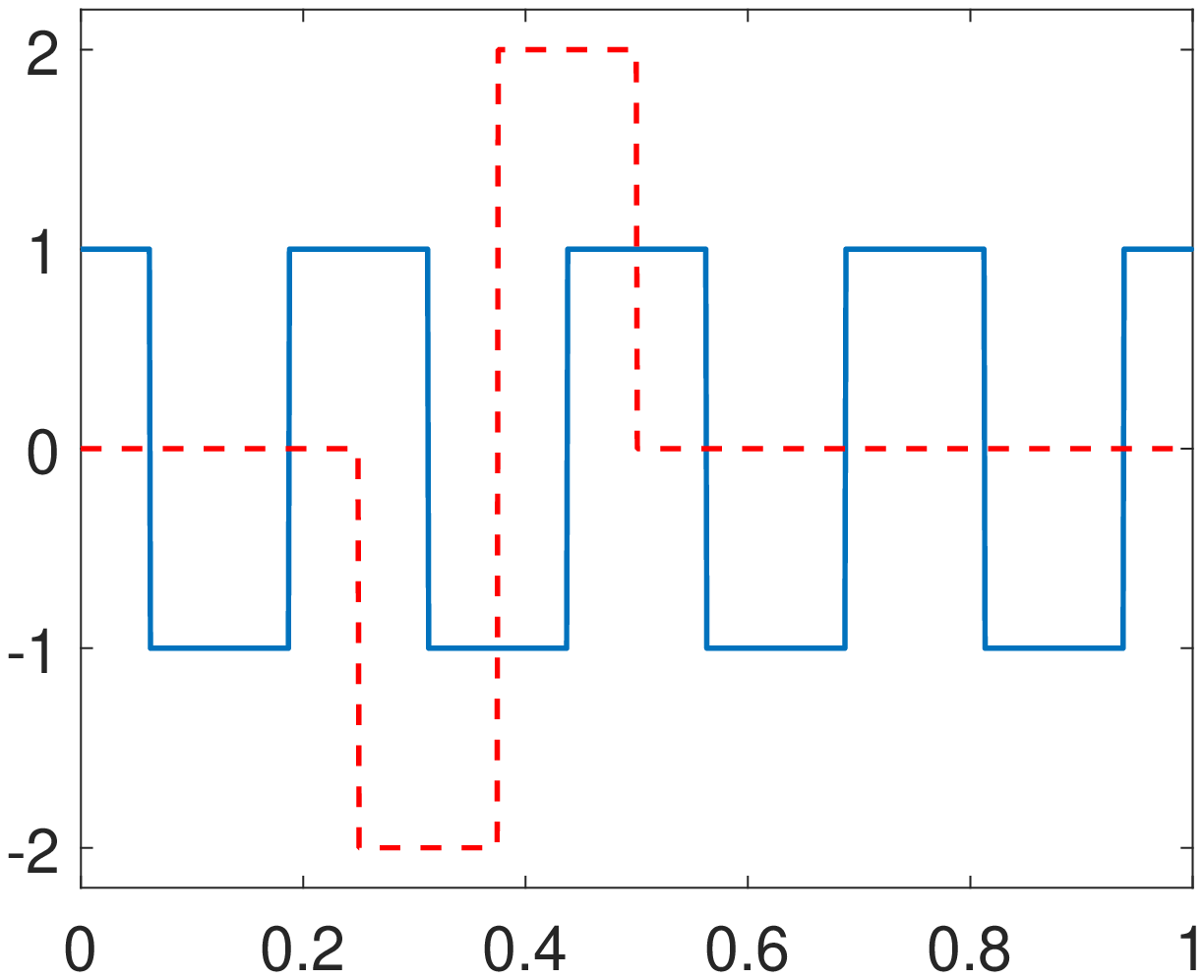}\label{fig:cancelation3}} 
	\caption{Intuition for block diagonal structure of the change of basis matrix with Haar wavelet $\psi_{j,m}$ (red, dashed line) and Walsh functions of order $n$ (blue).}
	\label{fig:cancelation}
\end{figure}

\section{Proof of the Main Result}

The proof of the main theorem relies on the investigation of the change of basis matrix as well as the relative sparsity of signals. With this analysis it is possible to use the results from \cite{breaking} to prove the non-uniform recovery guarantees.

The section is structured as follows: We start with the definition of the analysis tools for the change of basis matrix and the signal. Then, we are able to present Theorem 5.3 from \cite{breaking}. In section "key estimates" we evaluate the necessary analysis values for the Walsh-wavelet case, i.e. the local coherence, relative sparsity and strong balancing property. For the local coherence we use estimates from \cite{VegardUniform}. The analysis of the relative sparsity is related to the Fourier-Haar case in \cite{NoteOnHaarFourier} and the proof techniques of Lemma \ref{Lem:EstimateSSR} are similar to the ones in the main proof of \cite{WalshSSR}. For the final analysis of the relative sparsity also the previous estimate on the local coherence come into play. This is also the case for the estimate of $\tilde M$. Finally, the proof of the strong balancing property follows fast with the results from \S \ref{Ch:RelSparsity}. In \S \ref{Ch:Mainproof} these results are combined to get the main result.

Haar wavelets play a special role in the setting of Walsh functions, as they are structurally very similar. This is the reason why the main theorem can be shortened in this application. This is presented in the final subsection of this section.

\subsection{Preliminaries}
In this section we introduce Theorem 5.3 from \cite{breaking}. To do so we first introduce the definitions of the different elements therein.

We start with the \emph{balancing property}. To be able to solve \eqref{Eq:ReconstructionProblem} it is important that the  uneven finite section of the change of basis matrix is close to an isometry. The balancing property controls the relation between the number of samples and reconstructed coefficients, such that the matrix $P_N U P_M$ is close to an isometry.

\begin{definition}[Def. 5.1\cite{GSinfCS}]
	Let $U \in \mathcal B ( \ell^2(\N))$ be an isometry. Then $N \in \N$ and $K \geq 1$ satisfy the \emph{strong balancing property} with respect to $U,M \in \N$ and $s \in \N$ if 
	\begin{align}\label{Eq:balancingProperty}
	||P_M U^* P_N U P_M - P_M ||_{\ell^\infty \rightarrow \ell^\infty} &\leq \frac 1 8 \left( \log^{1/2}(4\sqrt s KM) \right)^{-1}, \\ ||P_M^\perp U^* P_N U P_M|| _{\ell^\infty \rightarrow \ell^\infty} &\leq \frac 1 8 ,
	\end{align}
	where $|| \cdot ||_{\ell^\infty \rightarrow \ell^\infty}$ is the norm on $\mathcal B (\ell^\infty (\N))$.
\end{definition}

This topic not only arises for the non-linear reconstruction but also for the linear reconstruction.  In the finite-dimensional setting this is assured by the SSR. The SSR has been analysed for different applications, like Walsh-wavelet \cite{WalshSSR}, Fourier-wavelet \cite{linearity, MilanaClarice} and Fourier-shearlet \cite{Ma}.

Next, we use the notation as in \cite{breaking}. Let
\begin{equation}
\tilde{M} = \min \left\{ i \in \N: \max_{k \geq i} ||P_N U e_k ||_2 \leq \frac 1 {32K\sqrt{s}} \right\}. 
\end{equation}

In the rest of the analysis we are interested in the number of samples needed for stable and accurate recovery. This value depends besides known values on the local coherence and the relative sparsity which are defined next. We start with the (global) coherence.

\begin{definition}[Def. 2.1 \cite{breaking}]\label{Def:Coherence}
	Let $U = (u_{i,j})_{i,j=1}^N \in \mathbb C ^{N \times N}$ be an isometry. The \emph{coherence} of $U$ is
	\begin{equation}\label{Eq:DefCoherence}
		\mu(U) = \max_{i,j =1 , \ldots, N} |u_{i,j}|^2
	\end{equation} 
\end{definition}

With this it is possible to define the local coherence for every level band.

\begin{definition}[Def. 4.2 \cite{breaking}]\label{Def:localCoherence}
		Let $U \in \mathcal B ( \ell^2(\N))$ be an isometry. The $(k,l)^{th}$ \emph{local coherence} of $U$ with respect to $\mathbf{M}, \mathbf{N}$ is given by
	\begin{equation}\label{Eq:localCoherencefin}
	\mu_{\mathbf{N},\mathbf{M}}(k,l) = \sqrt{\mu(P_{N_k}^{N_{k-1}}U P_{M_l}^{M_{l-1}}) \cdot \mu(P_{N_k}^{N_{k-1}}U)}, \quad k,l =1, \ldots, r.
	\end{equation}
	We also define
	\begin{equation}\label{Eq:localCoherenceInfin}
	\mu_{\mathbf{N},\mathbf{M}}(k, \infty) = \sqrt{\mu(P_{N_k}^{N_{k-1}}U P_{M_{r-1}}^\perp) \cdot \mu(P_{N_k}^{N_{k-1}}U)}.
	\end{equation}
\end{definition}

The local coherence will be evaluated for the Walsh-wavelet case in Corollary \ref{Cor:localCoherencefin} and Corollary \ref{Cor:LocalCoherenceInf}.

\begin{definition}[Def. 4.3 \cite{breaking}]\label{Def:relativeSparsity}
		Let $U \in \mathcal B ( \ell^2(\N))$ be an isometry and $\mathbf{s} = (s_1, \ldots, s_r) \in \N^r$ and $1 \leq k \leq r$ the $k^{th}$ \emph{relative sparsity} is given by
	\begin{equation}
	S_k = S_k(\mathbf{N},\mathbf{M}, \mathbf{s}) = \max_{\eta \in \Theta} ||P_{N_k}^{N_{k-1}}U \eta ||^2,
	\end{equation}
	with
	\begin{equation}
	\Theta = \left\{ \eta : ||\eta||_\infty\leq 1, | \operatorname{supp}(P_{M_l}^{M_{l-1}}\eta) |= s_l, l =1, \ldots r \right\}.
	\end{equation}
\end{definition}

We devote \S \ref{Ch:RelSparsity} to the analysis of the relative sparsity for our application type. The estimate can be found in Corollary \ref{Cor:relativeSparsity}.

After clarifying the notation and settings we are now able to state the main theorem from \cite{breaking}.

\begin{theorem}[Theo. 5.3 \cite{breaking}]\label{Theo:MainRef}
	Let $U \in \mathcal{B}(\ell^2(\N))$ be an isometry and $x \in \ell^1(\N)$. Suppose that $\Omega = \Omega_{N,m}$ is a multilevel sampling scheme, where $\mathbf{N} = (N_1, \ldots, N_r) \in \N^r$ and $\mathbf m = (m_1, \ldots, m_r) \in \N^r$. Let $(\mathbf s, \mathbf M)$, where $\mathbf{M} = (M_1, \ldots, M_r) \in \N^r$, $M_1 < \ldots < M_r$ and $\mathbf s = (s_1, \ldots, s_r)\in \N^r$, be any pair such that the following holds:
	\begin{enumerate}
		\item \label{Item1:TheoMainRef} The parameters
		\begin{equation}
			N = N_r , \quad K = \max_{k=1, \ldots, r} \left\{ \frac {N_{k}- N_{k-1}} {m_k} \right\},
		\end{equation}
		satisfy the strong balancing property with respect to $U, M :=M_r$ and $s:= s_1 + \ldots + s_r$;
		\item For $\epsilon \in (0, e^{-1}]$ and $1 \leq k \leq r$,
		\begin{equation}\label{Eq:TheoRef1}
			1 \gtrsim \frac{N_k - N_{k-1}}{m_k} \cdot \log (\epsilon^{-1}) \left( \sum_{l=1}^r \mu_{\mathbf{N},\mathbf{M}}(k,l) s_l \right) \cdot \log(K \tilde{M} \sqrt s),
		\end{equation}
		(with $\mu_{\mathbf{N},\mathbf{M}}(k,r)$ replaced by $\mu_{\mathbf{N},\mathbf{M}}(k, \infty)$) and $m_k \gtrsim \hat m _k\log(\epsilon^{-1})\log(K\tilde{M}\sqrt s)$, where $\hat m_k $ is such that
		\begin{equation}\label{Eq:TheoRef2}
			1 \gtrsim \sum_{k=1}^r \left( \frac{N_k - N_{k-1}}{\hat m _k} -1 \right) \cdot \mu_{\mathbf{N},\mathbf{M}} (k,l) \tilde s _k,
		\end{equation}
		for all $l=1, \ldots, r$ and all $\tilde s _1, \ldots, \tilde s _r \in (0, \infty)$ satisfying
		\begin{equation}
			\tilde s _1 + \ldots + \tilde s _r = s_1 + \ldots + s_r, \quad \tilde s _k \leq S_k(\mathbf{N}, \mathbf N, \mathbf s).
		\end{equation}
	\end{enumerate}
	Suppose that $\xi \in \ell^1(\N)$ is a minimizer of \eqref{Eq:ReconstructionProblem}. Then, with probability exceeding $1 - s \epsilon$ ,
	\begin{equation}
		|| \xi - x ||_2 \leq c \cdot \left( \delta \cdot \sqrt K \cdot (1+ L \cdot \sqrt s) + \sigma_{s,M}(f) \right)
	\end{equation}
	for some constant $c$ and $L = c \cdot \left( 1+ \frac{\sqrt{\log(6 \epsilon^{-1})}}{\log (4KM\sqrt s)} \right)$. If $m_k = N_k - N_{k-1}$ for $1 \leq k \leq r$ then this holds with probability $1$.
\end{theorem}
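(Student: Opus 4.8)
The statement is a non-uniform recovery guarantee, so I would follow the inexact dual certificate strategy going back to Gross and to Cand\`es--Plan, adapted to the infinite-dimensional, multilevel, asymptotically incoherent setting. The plan has four ingredients: a deterministic recovery condition phrased through a dual certificate, a finite-dimensional reduction powered by the strong balancing property, matrix concentration estimates driven by the local coherence and relative sparsity, and finally a weighted golfing construction of the certificate.

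First I would prove a deterministic sufficient condition for recovery. Writing $\Delta$ for the support of a best $(\mathbf s, \mathbf M)$-sparse approximation to $x$ within the first $M$ coefficients (so $|P_{M_l}^{M_{l-1}}\Delta| \le s_l$), and letting $A$ be the level-reweighted sampling operator built from $P_\Omega U$ (the weights producing the factor $K$), the claim is: if the local isometry bound $\|P_\Delta A^* A P_\Delta - P_\Delta\| \le 1/2$ holds and there is a vector $\rho \in \operatorname{ran}(U^* P_\Omega)$ with $\|P_\Delta \rho - \operatorname{sgn}(x)\|_2$ small, $\|P_{\Delta^c}\rho\|_\infty \le 1/2$, together with an off-support column bound, then every minimizer $\xi$ of \eqref{Eq:ReconstructionProblem} obeys $\|\xi - x\|_2 \lesssim \sqrt K \delta (1 + L\sqrt s) + \sigma_{s,M}(f)$. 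This stage is pure convex duality and linear algebra: feasibility of $x$ up to the noise level $\delta$ supplies one inequality, and the certificate $\rho$ turns the $\ell^1$-optimality of $\xi$ into the stated $\ell^2$ error, with the compressibility defect $\sigma_{s,M}(f)$ entering through the off-support tail of $x$.

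The remaining, probabilistic, work splits into a finite-dimensional reduction and concentration estimates. The infinite tail is tamed by the strong balancing property \eqref{Eq:balancingProperty}, which makes $P_M U^* P_N U P_M$ close to $P_M$ in the $\ell^\infty \to \ell^\infty$ norm, and by the cut-off $\tilde M$, whose definition forces $\|P_N U e_k\|_2 \le (32 K \sqrt s)^{-1}$ for all $k \ge \tilde M$; together these let me replace $U$ by $P_N U P_{\tilde M}$ with controllable error, which is why $\tilde M$ rather than $M$ sits inside the logarithms of \eqref{Eq:TheoRef1}--\eqref{Eq:TheoRef2}. I would model the uniform-without-replacement scheme of Definition \ref{Def:multilevelrandomsampling} by independent Bernoulli selection within each level at rate $m_k/(N_k - N_{k-1})$, and transfer back at the end. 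Splitting $A^* A - I$ into independent mean-zero self-adjoint summands indexed by the drawn rows, matrix Bernstein applies: the per-summand operator bound is governed by the local coherence $\mu_{\mathbf N, \mathbf M}(k,l)$ (controlling individual entries of the $(k,l)$ block), while the variance proxy is $\sum_l \mu_{\mathbf N, \mathbf M}(k,l) s_l$ weighted against the relative sparsities $S_k$. Demanding these tails be small is exactly what \eqref{Eq:TheoRef1} (local isometry) and \eqref{Eq:TheoRef2} (calibrating $\hat m_k$, hence $m_k$) encode, with $\log(\epsilon^{-1})$ the cost of each failure probability.

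Finally I would construct $\rho$ by a weighted golfing scheme: partition the Bernoulli draws into $O(\log(K\tilde M \sqrt s))$ independent batches $A_1, A_2, \ldots$, set $\rho^{(0)} = 0$ and $\rho^{(i)} = \rho^{(i-1)} + A_i^* A_i P_\Delta(\operatorname{sgn}(x) - \rho^{(i-1)})$, so the on-support residual $\|P_\Delta(\operatorname{sgn}(x) - \rho^{(i)})\|_2$ contracts geometrically (from the local isometry) while the off-support $\ell^\infty$ increment at each swing stays small by a vector Bernstein bound driven by local coherence; after logarithmically many swings the residual drops below tolerance and $\|P_{\Delta^c}\rho\|_\infty < 1/2$, and a union bound over batches and the $r$ levels yields probability $1 - s\epsilon$. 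The main obstacle is precisely this infinite-dimensional, multilevel golfing: one must keep the off-support certificate strictly below $1$ across all levels at once, control the columns beyond $\tilde M$, and stop the variance contributions of distant levels from accumulating --- which forces the simultaneous coupling of the per-level counts $m_k$, the local coherences $\mu_{\mathbf N, \mathbf M}(k,\cdot)$ and the relative sparsities $S_k$ recorded in \eqref{Eq:TheoRef1}--\eqref{Eq:TheoRef2}. Getting this three-way bookkeeping to close, rather than any single concentration inequality, is the crux. The deterministic case $m_k = N_k - N_{k-1}$ then follows at once, since there $A^* A = I$ and an exact certificate is available, giving probability $1$.
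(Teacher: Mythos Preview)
The paper does not prove this statement at all: it is quoted verbatim as Theorem~5.3 from \cite{breaking} and used as a black box. The present paper's contribution is to verify, for the specific Walsh--wavelet change-of-basis matrix, that the hypotheses of this cited theorem (the strong balancing property, the local-coherence condition \eqref{Eq:TheoRef1}, and the relative-sparsity condition \eqref{Eq:TheoRef2}) follow from the simpler assumptions of Theorem~\ref{Theo:Main}. So there is no ``paper's own proof'' to compare against.

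That said, your sketch is a fair outline of how the result is actually established in \cite{breaking}: a deterministic dual-certificate recovery condition, reduction to a finite block via the balancing property and the cutoff $\tilde M$, matrix/vector Bernstein bounds whose variance and uniform-bound parameters are exactly the local coherences and relative sparsities, and a multilevel golfing construction over $O(\log(K\tilde M\sqrt s))$ batches. The identification of the ``crux'' --- keeping the off-support $\ell^\infty$ bound below $1$ uniformly across levels while the variance contributions from distant levels do not pile up --- is accurate, and is precisely what forces the coupled conditions \eqref{Eq:TheoRef1}--\eqref{Eq:TheoRef2}. If you intend to reproduce the proof rather than cite it, you should consult \cite{breaking} directly; the paper under review does not supply those details.
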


It is a mathematical justification to use structured sampling schemes as discussed in \cite{breaking} in contrast to the first compressed sensing results which promoted the use of random sampling masks \cite{CS1, Donoho}. In the next section we will transfer this result to the Walsh-wavelet case. For this sake we investigate the previously defined values for this case.

\subsection{Key estimates}

In this chapter we discuss the important estimates that are needed for the proof of Theorem \ref{Theo:Main}. They are also interesting for themselves and allow a deeper understanding of the relation between Walsh functions and wavelets.

\subsubsection{Local coherence estimate}

For the local coherence we are interested in the largest value of section of $U$. For this investigation we need to gain insight into the value of $|\langle \varphi, \Wal(k,\cdot) \rangle|$ for $\varphi$ being a Daubechies wavelet and $k \in \N$. Therefore, we start with restating the results about the decay rate of functions under the Walsh transform.

\begin{lemma}[Lem. 4.7 \cite{WalshSSR}]\label{Lemma:Decayrate}
	Let $\phi$ be the mother scaling function of order $p \geq 3$ and $\phi^\#$ be its reflected version. Moreover, let $\psi$ be the corresponding mother wavelet. Then we have that $C_\varphi = \sup_{t \in \R} |\varphi'(t)|$ exist and 
	\begin{equation}
	|\reallywidehat[W]{\phi_i}(z)| \leq  \frac {C_{\phi}} {z} \quad ,\quad |\reallywidehat[W]{\phi_i^\#}(z)| \leq  \frac {C_{\phi^\#}} {z} \quad \text{ and } \quad |\reallywidehat[W]{\psi}(z)| \leq  \frac {C_{\psi}} {z}  .
	\end{equation}
	We denote by $C_{\phi,\psi}$ the maximum of $\left\{C_\phi, C_{\phi^\#},C_\psi, \right\}$.
\end{lemma}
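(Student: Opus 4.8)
<br>

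The plan is to separate the claim into its regularity part (existence of the constants $C_\varphi$) and its decay part, the latter following from a general estimate on the Walsh transform of a Lipschitz function on $[0,1]$. For the regularity part: it is classical that the Daubechies mother scaling function $\phi$ and mother wavelet $\psi$ of order $p\ge 3$ are continuously differentiable with compactly supported (hence bounded) derivative — already for $p=3$ the Hölder exponent exceeds $1$ — so $C_\phi=\sup_{t\in\R}|\phi'(t)|$ and $C_\psi=\sup_{t\in\R}|\psi'(t)|$ are finite, and since $\phi^\#(x)=\phi(1-x)$ gives $(\phi^\#)'(x)=-\phi'(1-x)$, also $C_{\phi^\#}=C_\phi<\infty$. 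Consequently each of the elementary functions $\phi_i$, $\phi_i^\#$ from \eqref{phii} and $\psi$ — each the restriction to $[0,1]$ of a $C^1$ function — is Lipschitz on $(0,1)$ with constant at most $C_\phi$, $C_{\phi^\#}$, $C_\psi$ respectively. It therefore suffices to prove: if $g\colon[0,1]\to\mathbb{C}$ is bounded and agrees on $(0,1)$ with a function of Lipschitz constant $L$, then $|\mathcal W\{g\}(z)|\le L/z$ for every integer $z\ge 1$ (which is all that is needed, since $U$ only involves $\Wal(j,\cdot)$ with $j\in\N$).

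To prove this, fix $z\ge 1$ and let $n\ge 0$ be the integer with $2^n\le z<2^{n+1}$. The key structural fact — which I would verify directly from the Walsh--Kaczmarz formula $\Wal(z,x)=(-1)^{\sum_i(z_i+z_{i+1})x_{-i-1}}$, or equivalently by writing $\Wal(z,\cdot)=\Wal(2^n,\cdot)\,\Wal(z-2^n,\cdot)$ (legitimate since dyadic addition is bitwise XOR and the $2^n$-bit of $z$ is $1$) together with the scaling property — is that for every $z$ in the dyadic block $[2^n,2^{n+1})$ the function $\Wal(z,\cdot)$ is constant on dyadic intervals of length $2^{-(n+3)}$ and
\begin{equation*}
\int_J \Wal(z,x)\,dx = 0 \qquad \text{for every dyadic interval } J \text{ with } |J|=2^{-(n+1)};
\end{equation*}
indeed, on the four equal dyadic quarters of such a $J$ the function $\Wal(z,\cdot)$ takes the values $c_J,-c_J,\varepsilon c_J,-\varepsilon c_J$ for suitable $c_J,\varepsilon\in\{-1,1\}$, which sum to zero. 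Now partition $[0,1]$ into the $2^{n+1}$ dyadic intervals $J_k$ of length $2^{-(n+1)}$, choose $c_k\in J_k$, and subtract the constant $g(c_k)$ on each $J_k$; the vanishing integral kills the constant part, so
\begin{equation*}
|\mathcal W\{g\}(z)| = \Bigl|\sum_k \int_{J_k}\bigl(g(x)-g(c_k)\bigr)\Wal(z,x)\,dx\Bigr| \le \sum_k \int_{J_k} L\,|x-c_k|\,dx \le 2^{n+1}\cdot L\,\bigl(2^{-(n+1)}\bigr)^2 = L\,2^{-(n+1)} \le \frac{L}{z},
\end{equation*}
the final inequality because $z<2^{n+1}$. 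Applying this with $(g,L)$ equal to $(\phi_i,C_\phi)$, $(\phi_i^\#,C_{\phi^\#})$ and $(\psi,C_\psi)$ gives the three stated bounds, and $C_{\phi,\psi}:=\max\{C_\phi,C_{\phi^\#},C_\psi\}$ is the desired common constant.

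I expect the only genuine obstacle to be the verification of the cancellation identity $\int_J\Wal(z,\cdot)=0$ for $z$ in a dyadic block. For the Walsh--Paley ordering it is immediate, because there $\Wal_{\mathrm{Pal}}(z,\cdot)$ factors on such a block as a Rademacher function times a function constant on the relevant dyadic intervals; for the Walsh--Kaczmarz ordering used here one must either carry the "$z_i+z_{i+1}$" bit-spreading in the defining formula through the dyadic-digit bookkeeping, or observe that within each block $[2^n,2^{n+1})$ the Walsh--Kaczmarz system is a permutation of the Walsh--Paley system and hence inherits the property. A minor point, dispatched in one line, is that the cut-offs in the definition of $\phi_i$ (and the restriction of $\psi$ to $[0,1]$) can create jumps at $0$ and $1$; this is harmless, since the estimate only compares values of $g$ at interior points of a common dyadic subinterval, so only the Lipschitz bound is used.
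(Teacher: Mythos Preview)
First, a framing remark: the present paper does not actually prove this lemma. It is quoted from \cite{WalshSSR} (Lemma~4.7 there), and the only content added here is the remark following the statement, that the original version was formulated for dyadic-rational inputs but extends verbatim to all $z$. So there is no in-paper proof to compare against; your write-up is a genuine proof attempt where the paper offers none.

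Your overall strategy --- reduce to a decay estimate for the Walsh transform of a Lipschitz function on $[0,1]$, exploiting that $\Wal(z,\cdot)$ has mean zero on sufficiently fine dyadic intervals --- is exactly the right idea and is how such bounds are obtained. The regularity discussion (Daubechies of order $p\ge 3$ are $C^1$, hence $C_\phi,C_\psi<\infty$, and $C_{\phi^\#}=C_\phi$) is correct.

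However, the key cancellation claim is stated at the wrong dyadic scale. For $z\in[2^n,2^{n+1})$ the Walsh--Kaczmarz function $\Wal(z,\cdot)$ is \emph{constant} on every dyadic interval of length $2^{-(n+1)}$; it does not split into four sub-values there, and in particular $\int_J\Wal(z,\cdot)\,dx=\pm\,2^{-(n+1)}\neq 0$ for such $J$. (Check $z=2,3$: both are $\pm 1$ on quarters.) Your own Paley--ordering heuristic already says this: writing $\wal_z^{\mathrm{Pal}}=r_{n+1}\cdot\wal_{z-2^n}^{\mathrm{Pal}}$ with the second factor constant on intervals of length $2^{-n}$, one sees that the integral vanishes on dyadic $J$ with $|J|=2^{-n}$, not $2^{-(n+1)}$; and since Kaczmarz and Paley agree as sets within each block, the same holds for the ordering used here.

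The fix is immediate and preserves the constant. Partition $[0,1]$ into the $2^n$ dyadic intervals $J_k$ of length $2^{-n}$, take $c_k$ the midpoint of $J_k$ so that $|x-c_k|\le 2^{-(n+1)}$ on $J_k$, and use $\int_{J_k}\Wal(z,\cdot)=0$:
\[
|\mathcal W\{g\}(z)|\le \sum_{k}\int_{J_k}L\,|x-c_k|\,dx\le 2^{n}\cdot L\cdot 2^{-(n+1)}\cdot 2^{-n}=L\,2^{-(n+1)}<\frac{L}{z}.
\]
Everything else in your argument (the treatment of the cut-offs in $\phi_i$, the remark that only interior Lipschitz comparisons are needed) is fine.
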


Remark that Lemma 4.7 is stated only for inputs of the type $\frac j L + m$, where $L= 2^R$ for some $R \in \N$ and $j =0, \ldots, L-1$. However, the proof lines follow equivalently for general inputs $z \in \R$. Moreover, the constant $C_\varphi$ can be easily reduced from the proof lines. 

This Lemma is not explicitly used in the analysis of the local coherence. However, the next Theorem also relies on this knowledge. Moreover, we will use Lemma \ref{Lemma:Decayrate} for the relative sparsity.

Now, we recall Proposition 4.5 from \cite{VegardUniform} about the local coherence. Note that the local coherence has a different definition in \cite{breaking} and \cite{VegardUniform}. We will continue to use the one from \cite{breaking} and adapt the results from \cite{VegardUniform} accordingly. 

\begin{theorem}[Prop. 4.5 \cite{VegardUniform}]\label{Th:VegardLocalCoherence}
	Let $U$ be the change of basis matrix for Walsh functions and boundary wavelets of order $p \geq 3$ and minimal wavelet decomposition $J_0$. Moreover, let $\mathbf{M}$ and $\mathbf{N}$ as in \eqref{Eq:M} and \eqref{Eq:N}. Then we have that
	\begin{equation}
		\mu(\PNk U\PMl) \leq C_{\mu}2^{-J_0 -k}2^{-|l-k|}
	\end{equation}
	for the constant $C_{\mu}>0$ independent of $k,l$.
\end{theorem}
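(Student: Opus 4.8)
The plan is to reduce the statement to a single pointwise bound on the matrix entries $u_{i,j}=\langle\Wal(i,\cdot),\varphi_j\rangle$ and then optimise over the index bands. By Definition~\ref{Def:Coherence}, $\mu(\PNk U\PMl)=\max\{|\langle\Wal(i,\cdot),\varphi_j\rangle|^2 : N_{k-1}<i\le N_k,\ M_{l-1}<j\le M_l\}$. By the ordering of \S\ref{Ch:Ordering} together with \eqref{Eq:M} and \eqref{Eq:N}, an index $j$ in the $l$-th reconstruction band corresponds to a boundary wavelet $\psi^b_{j',m}$ at scale $j'=J_0+l-1$ (for $l=1$, to a boundary scaling function $\phi^b_{J_0,n}$ or a reflected one at scale $J_0$), and an index $i$ in the $k$-th sampling band satisfies $2^{J_0+k-1}<i\le 2^{J_0+k}$ for $2\le k\le r-1$, $i\le 2^{J_0+1}$ for $k=1$, and $2^{J_0+r-1}<i\le 2^{J_0+r+q}$ for $k=r$; in particular $2^{-j'}i\ge 2^{k-l}$ in every band, and $2^{-j'}i\le 2^{k-l+1}$ away from the top band. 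Hence it suffices to prove, uniformly in $i$ and $m$, the single-entry estimate
\[
 |\langle\Wal(i,\cdot),\psi^b_{j',m}\rangle|\ \le\ C\,p\,2^{-j'/2}\,\bigl(\max\{1,\,2^{-j'}i\}\bigr)^{-1},
\]
and the same with $\phi,\phi^\#$ at scale $j'=J_0$ in the role of $\psi^b$. Substituting the band ranges gives $|\langle\Wal(i,\cdot),\varphi_j\rangle|\lesssim 2^{-(J_0+l-1)/2}\min\{1,2^{l-k}\}$; squaring, one checks that $2^{-(J_0+l-1)}\cdot 2^{2(l-k)}$ and $2^{-(J_0+l-1)}$ are both $\lesssim 2^{-J_0-k}2^{-|k-l|}$ in the regimes $k\ge l$ and $k\le l$ respectively (the exponents match exactly, up to a factor $2$), so the bound follows with $C_\mu$ absorbing $C$, $p$ and $\|\psi^b\|_1$.

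For $2^{-j'}i\le 1$ (the regime $k\le l$) the single-entry estimate is just the trivial bound $|\langle\Wal(i,\cdot),\psi^b_{j',m}\rangle|\le\|\psi^b_{j',m}\|_1=2^{-j'/2}\|\psi^b\|_1$. The substantive case is $2^{-j'}i>1$, where genuine cancellation is needed. Following the device used for the scaling function in \eqref{phii}, I would decompose the unit-scale wavelet into pieces supported in $[0,1]$, $\psi^b=\sum_{t}\psi^b_t(\cdot-t+1)$, with at most $O(p)$ nonzero terms and each $\psi^b_t$ Lipschitz; this is legitimate because $\psi^b_{j',m}$ is, up to the factor $2^{j'/2}$, a Lipschitz function supported in an interval of length $O(p)$ (for interior $m$ it is a Daubechies wavelet of order $p\ge3$, for the $O(p)$ boundary indices it is a boundary wavelet, and in both cases Lipschitz regularity is inherited from $p\ge3$ and preserved by the boundary correction). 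Substituting $x=2^{-j'}(u+m+t-1)$ with $u\in[0,1)$, the dyadic expansions of $2^{-j'}u$ and of $2^{-j'}(m+t-1)$ occupy disjoint digit positions, so the dyadic and ordinary sums agree; the multiplicative identity $\Wal(i,a\oplus b)=\Wal(i,a)\Wal(i,b)$ and the scaling property $\Wal(i,2^{-j'}u)=\Wal(2^{-j'}i,u)$ then give
\[
 \langle\Wal(i,\cdot),\psi^b_{j',m}\rangle=2^{-j'/2}\sum_{t}\Wal\bigl(i,2^{-j'}(m+t-1)\bigr)\,\langle\psi^b_t,\Wal(2^{-j'}i,\cdot)\rangle .
\]
Since $|\Wal|\equiv1$ and there are $O(p)$ terms, it remains to bound $|\langle\psi^b_t,\Wal(z,\cdot)\rangle|\le C/z$ for $z=2^{-j'}i>1$; this is the content of Lemma~\ref{Lemma:Decayrate} — more precisely of its proof, which only uses that $\psi^b_t$ is Lipschitz with support in $[0,1]$ — and, combined with the trivial bound, yields the claimed factor $(\max\{1,z\})^{-1}$. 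The case $l=1$ is identical, using \eqref{phii} directly for $\phi$ and $\phi^\#$ and Lemma~\ref{Cor:scalingProp} to strip the integer shifts.

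It then remains to treat the two extremal bands. For $k=1$ the sequencies are small, $2^{-j'}i\le 2^{2-l}\le1$ once $l\ge2$ (and $\le2$ when $l=1$), so the trivial bound already gives the assertion. For $k=r$ the oversampling by $q$ lets $i$ run up to $2^{J_0+r+q}$, but this only enlarges $2^{-j'}i$ and hence shrinks the inner product; using merely $i>N_{r-1}=2^{J_0+r-1}$ one still has $2^{-j'}i\ge2^{r-l}$, which is all that the estimate $2^{-j'/2}2^{l-r}$ requires. Collecting the two regimes, $|\langle\Wal(i,\cdot),\varphi_j\rangle|^2\lesssim 2^{-J_0-k}2^{-|k-l|}$ with a constant depending only on $p$ and the Lipschitz norms of the mother functions (equivalently on $C_{\phi,\psi}$ of Lemma~\ref{Lemma:Decayrate}), which is the claim.

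The main obstacle is the change-of-variables bookkeeping: one must rescale the wavelet to unit scale, split its support into unit intervals on which the Walsh argument factorises multiplicatively, and keep the number of pieces and their Lipschitz norms bounded independently of the level — this is exactly why one cannot work with the raw mother function (whose support exceeds $[0,1]$) and must route through \eqref{phii}. A conceptually important point is that vanishing moments play no role here, in contrast with the Fourier-wavelet situation of \cite{breaking}: only Lipschitz continuity is used, which is why the estimate — and with it Theorem~\ref{Theo:Main} — does not improve for wavelets of order $p>3$.
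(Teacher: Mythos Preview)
Your argument is essentially correct and follows the natural route: reduce the block coherence to a single-entry estimate, rescale the wavelet to unit scale, split its support into unit-interval pieces so that the Walsh argument factorises via the multiplicative identity, and then invoke the $C/z$ decay of Lemma~\ref{Lemma:Decayrate} for each Lipschitz piece. The exponent bookkeeping you give (squaring $2^{-j'/2}\min\{1,2^{l-k}\}$ and comparing with $2^{-J_0-k}2^{-|k-l|}$) is accurate.

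Note, however, that the paper does \emph{not} prove this statement: Theorem~\ref{Th:VegardLocalCoherence} is quoted from \cite{VegardUniform} (their Proposition~4.5), and the only indication of method here is the Remark following it, which points to Lemma~6.5 of \cite{VegardUniform} together with Lemma~\ref{Lemma:Decayrate} and records the constant $C_\mu=(2p\,C_{\phi,\psi})^2$. Your sketch is precisely the argument that remark alludes to, so there is no meaningful difference in approach to report.

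Two minor technical points you should tighten if you want a self-contained proof. First, the factorisation step needs $m+t-1\ge 0$ for the dyadic and ordinary sums to coincide; the paper handles this for the scaling function via the shift $p_R$ (adding a suitable multiple of $2^{j'}$ and using $1$-periodicity of $\Wal(i,\cdot)$ for integer $i$), and you should do the same. Second, for $l=1$ the first block contains both boundary scaling functions \emph{and} boundary wavelets at level $J_0$; you mention only the former, though of course the same Lipschitz argument covers both. Neither point affects the validity of your outline.
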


\begin{remark}
	The definition of the constant $C_\mu$ is not given in detail in \cite{VegardUniform}. However, following the discussion in Lemma 6.5. therein together with the estimate from Lemma \ref{Lemma:Decayrate} we get that
	\begin{equation}
		C_{\mu}=\left(2p~C_{\phi,\psi}\right)^2.
	\end{equation}
\end{remark}

Beside of bounding the first factor in the local coherenc estimate. This theorem can also be used to bound the second factor of the local coherence. Afterwards we combine both results to the estimate of the local coherences in Corollary \ref{Cor:localCoherencefin} and \ref{Cor:LocalCoherenceInf}.

\begin{corollary}\label{Lem:boundOnmuPNU}
	Let $U$ be the change of basis matrix for the boundary Daubechies wavelets and Walsh functions. Moreover, let $\mathbf M$ and $\mathbf N$ be defined by \eqref{Eq:M} and \eqref{Eq:N}. Then we have that
	\begin{equation}
	\mu(P_{N_k}^{N_{k-1}}U) \leq C_{\mu}2^{-(J_0 + k-1)} 
	\end{equation}
\end{corollary}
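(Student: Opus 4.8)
The plan is to deduce this directly from Theorem \ref{Th:VegardLocalCoherence} by observing that the coherence of the row band $\PNk U$ is just the supremum over $l$ of the block coherences $\mu(\PNk U\PMl)$. Indeed, by Definition \ref{Def:Coherence} we have $\mu(\PNk U) = \sup_{j \in \N}\,\max_{N_{k-1}<i\le N_k}|u_{i,j}|^2$, and the columns $\N$ decompose into the disjoint dyadic bands $\{M_{l-1}+1,\ldots,M_l\}$, $l\ge 1$, obtained by extending the level structure \eqref{Eq:M} to all $l$. Every column index $j$ lies in exactly one such band, so $\mu(\PNk U) = \sup_{l\ge 1}\mu(\PNk U\PMl)$.

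Next I would feed in the estimate from Theorem \ref{Th:VegardLocalCoherence}, namely $\mu(\PNk U\PMl)\le C_{\mu}2^{-J_0-k}2^{-|l-k|}$ with $C_{\mu}$ independent of $k$ and $l$. Taking the supremum over $l$, the factor $2^{-|l-k|}$ is maximised, with value $1$, at $l=k$; hence $\mu(\PNk U)\le C_{\mu}2^{-J_0-k}$. Since $2^{-J_0-k}\le 2^{-(J_0+k-1)}$, this already gives the claimed bound (in fact slightly more).

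The only point requiring a word of care is the tail in $l$: the column index $j$ runs over all of $\N$, so one must use that the per-block estimate of Theorem \ref{Th:VegardLocalCoherence} is valid for arbitrarily large $l$, not merely $l\le r$. This is automatic, because that proposition is a statement about a fixed wavelet level tested against a fixed sequency band (it ultimately rests on the decay rate Lemma \ref{Lemma:Decayrate}), so it holds for every $l\ge 1$; the geometric weight $2^{-|l-k|}$ then guarantees the supremum over $l$ is finite and attained near $l=k$. Consequently I expect no genuine obstacle here: the corollary is essentially a repackaging of Theorem \ref{Th:VegardLocalCoherence} together with the elementary identity $\sup_{l\ge 1}2^{-|l-k|}=1$.
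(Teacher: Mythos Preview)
Your proposal is correct and follows essentially the same route as the paper: decompose the column index set into the dyadic bands, apply Theorem~\ref{Th:VegardLocalCoherence} blockwise, and take the supremum over $l$, which is attained at $l=k$. You are in fact slightly more careful than the paper on one point: the paper writes $\mu(\PNk U)=\max_{l=1,\ldots,r}\mu(\PNk U\PMl)$, tacitly ignoring the columns beyond $M_r$, whereas you correctly let $l$ range over all of $\N$ and observe that the block estimate remains valid there; your remark that the resulting bound $C_\mu 2^{-J_0-k}$ is actually a factor of $2$ sharper than the stated $C_\mu 2^{-(J_0+k-1)}$ is also accurate.
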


\begin{proof}
	Recall that 
	\begin{equation}
	\mu(P_{N_k}^{N_{k-1}}U\PMl) = \max_{i=N_{k-1}+1, \ldots,N_k} \max_{j=M_{l-1}+1, \ldots,M_l} |u_{i,j}|^2.
	\end{equation}
	Hence, we get that
	\begin{align}
		\mu(P_{N_k}^{N_{k-1}}U) = \max_{l=1, \ldots,r} \mu(P_{N_k}^{N_{k-1}}U\PMl).
	\end{align}
	Next, we use Theorem \ref{Th:VegardLocalCoherence} to obtain
	\begin{align}
		\max_{l=1, \ldots,r} \mu(P_{N_k}^{N_{k-1}}U\PMl) 
		& \leq \max_{l=1, \ldots,r} \left\{ C_{\mu}2^{-J_0 -k}2^{-|l-k|} \right\} \\
		& =  C_{\mu}2^{-(J_0 + k-1)}.
	\end{align}
\end{proof}

With these two theorems at hand we can now give an estimate for the local coherence.

\begin{corollary}\label{Cor:localCoherencefin}
	Let $\mu_{\mathbf{N},\mathbf{M}}(k,l)$ be as in \eqref{Eq:localCoherencefin}. Then,
	\begin{equation}
		\mu_{\mathbf{N},\mathbf{M}}(k,l) \leq C_{\mu} 2^{-1/2} 2^{-(J_0 + k-1)} 2^{-|k-l|/2}.	
	\end{equation}
\end{corollary}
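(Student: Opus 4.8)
The plan is to simply combine the two ingredients already assembled: Theorem \ref{Th:VegardLocalCoherence}, which bounds the first factor $\mu(\PNk U \PMl)$ inside the square root, and Corollary \ref{Lem:boundOnmuPNU}, which bounds the second factor $\mu(\PNk U)$. By the definition \eqref{Eq:localCoherencefin}, $\mu_{\mathbf{N},\mathbf{M}}(k,l)$ is the geometric mean of these two quantities, so I would write
\begin{equation}
\mu_{\mathbf{N},\mathbf{M}}(k,l) = \sqrt{\mu(\PNk U \PMl)\cdot\mu(\PNk U)} \leq \sqrt{C_\mu 2^{-J_0-k}2^{-|l-k|}\cdot C_\mu 2^{-(J_0+k-1)}}.
\end{equation}

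From here the result is pure bookkeeping of exponents. Taking the square root of the product gives a factor $C_\mu$, a factor $2^{-(J_0+k)/2}\cdot 2^{-(J_0+k-1)/2} = 2^{-(J_0+k-1)}\cdot 2^{-1/2}$, and a factor $2^{-|l-k|/2}$. Collecting these yields
\begin{equation}
\mu_{\mathbf{N},\mathbf{M}}(k,l) \leq C_\mu\, 2^{-1/2}\, 2^{-(J_0+k-1)}\, 2^{-|k-l|/2},
\end{equation}
which is exactly the claimed bound (using $|l-k| = |k-l|$).

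There is no real obstacle here; the corollary is a one-line consequence of the preceding two results, and the only thing to be careful about is the arithmetic with the exponents of $2$ — specifically that $\tfrac12(J_0+k) + \tfrac12(J_0+k-1) = (J_0+k-1) + \tfrac12$, which produces the stray $2^{-1/2}$ in the statement. The genuinely substantive work (the decay-rate estimates feeding Theorem \ref{Th:VegardLocalCoherence}, and the reduction in Corollary \ref{Lem:boundOnmuPNU}) has already been done upstream.
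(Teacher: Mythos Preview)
Your proposal is correct and follows exactly the same approach as the paper: you combine the bound from Theorem \ref{Th:VegardLocalCoherence} for $\mu(\PNk U \PMl)$ with the bound from Corollary \ref{Lem:boundOnmuPNU} for $\mu(\PNk U)$, substitute into the definition \eqref{Eq:localCoherencefin}, and simplify the exponents. The paper's proof is line-for-line the same computation.
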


\begin{proof}
	Combining the estimate from Theorem \ref{Th:VegardLocalCoherence}, the result in Corollary \ref{Lem:boundOnmuPNU} and the definition of the local coherence in \ref{Def:localCoherence} we get 
		\begin{align}
			\mu_{\mathbf{N},\mathbf{M}}(k,l)  
			& = \sqrt{\mu(P_{N_k}^{N_{k-1}}U P_{M_l}^{M_{l-1}}) \cdot \mu(P_{N_k}^{N_{k-1}}U)}\\
			& \leq \left( C_{\mu}2^{-J_0 -k}2^{-|l-k|}\right) ^{1/2} \left(  C_{\mu}2^{-(J_0 + k-1)} \right)^{1/2} \\
			& = C_{\mu} 2^{-1/2} 2^{-(J_0 + k-1)} 2^{-|k-l|/2}.
		\end{align}
\end{proof}

Because of the infinite-dimensional setting we also have to estimate $\mu_{\mathbf{N},\mathbf{M}}(k,\infty)$ from \eqref{Eq:localCoherenceInfin}. This is done in the following Corollary.

\begin{corollary}\label{Cor:LocalCoherenceInf}
	Let $\mu_{\mathbf{N},\mathbf{M}}(k, \infty)$ as in \eqref{Eq:localCoherenceInfin}. Then,
	\begin{equation}
	\mu_{\mathbf{N},\mathbf{M}}(k, \infty) \leq C_{\mu} 2^{-(J_0 + k-1)}2^{-(r-k)/2}.
	\end{equation}
\end{corollary}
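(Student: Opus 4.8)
The plan is to mirror the proof of Corollary \ref{Cor:localCoherencefin}, with the only change being that the first factor $\mu(P_{N_k}^{N_{k-1}}UP_{M_{r-1}}^\perp)$ replaces $\mu(P_{N_k}^{N_{k-1}}UP_{M_l}^{M_{l-1}})$. First I would observe that, by definition of the levels $\mathbf M$ in \eqref{Eq:M}, the projection $P_{M_{r-1}}^\perp$ onto the orthogonal complement of the first $M_{r-1}$ coordinates picks out exactly the reconstruction functions of level $r$ and higher, so $P_{M_{r-1}}^\perp = \sum_{l \geq r} P_{M_l}^{M_{l-1}}$ (with the convention that the block structure extends past $r$). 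Since the coherence is a maximum of squared entries, $\mu(P_{N_k}^{N_{k-1}}UP_{M_{r-1}}^\perp) = \max_{l \geq r}\mu(P_{N_k}^{N_{k-1}}UP_{M_l}^{M_{l-1}})$.

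Next I would apply Theorem \ref{Th:VegardLocalCoherence}, which gives $\mu(P_{N_k}^{N_{k-1}}UP_{M_l}^{M_{l-1}}) \leq C_\mu 2^{-J_0-k}2^{-|l-k|}$ for each $l$; note the estimate in that theorem is stated for general $l$ and does not require $l \leq r$, so it applies for all $l \geq r$. Taking the maximum over $l \geq r$, the factor $2^{-|l-k|} = 2^{-(l-k)}$ (since $l \geq r \geq k$) is maximised at $l = r$, yielding $\mu(P_{N_k}^{N_{k-1}}UP_{M_{r-1}}^\perp) \leq C_\mu 2^{-J_0-k}2^{-(r-k)}$. Combining this with the bound $\mu(P_{N_k}^{N_{k-1}}U) \leq C_\mu 2^{-(J_0+k-1)}$ from Corollary \ref{Lem:boundOnmuPNU} and the definition \eqref{Eq:localCoherenceInfin}, I get
\begin{align}
\mu_{\mathbf N,\mathbf M}(k,\infty)
&= \sqrt{\mu(P_{N_k}^{N_{k-1}}UP_{M_{r-1}}^\perp)\cdot \mu(P_{N_k}^{N_{k-1}}U)} \\
&\leq \left(C_\mu 2^{-J_0-k}2^{-(r-k)}\right)^{1/2}\left(C_\mu 2^{-(J_0+k-1)}\right)^{1/2} \\
&= C_\mu 2^{-1/2}2^{-(J_0+k-1)}2^{-(r-k)/2}2^{-1/2},
\end{align}
and the two factors of $2^{-1/2}$ combine with $2^{-(r-k)/2}$; a slightly cleaner bookkeeping shows the stated $C_\mu 2^{-(J_0+k-1)}2^{-(r-k)/2}$ holds (the extra $2^{-1/2}$ only improves the constant, or is absorbed into $C_\mu$).

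I do not expect any real obstacle here: the statement is a direct corollary of two already-proven estimates plus the elementary observation that $P_{M_{r-1}}^\perp$ is a sum of the tail level-blocks. The only point requiring a word of care is the implicit claim that Theorem \ref{Th:VegardLocalCoherence}, stated for $k,l = 1,\ldots,r$, remains valid for the ``level'' indices $l > r$ hidden inside $P_{M_{r-1}}^\perp$ — this is fine because the underlying decay estimate of Lemma \ref{Lemma:Decayrate} and the coherence computation in \cite{VegardUniform} hold for all wavelet levels regardless of the artificial truncation at $r$. One should also double-check the exponent arithmetic so that the final constant matches the statement (the factor $2^{-1/2}$ discrepancy above should be reconciled, either by tightening the geometric-series step or by noting it is harmless).
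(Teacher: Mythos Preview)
Your proposal is correct and follows essentially the same route as the paper: bound $\mu(P_{N_k}^{N_{k-1}}UP_{M_{r-1}}^\perp)$ via Theorem~\ref{Th:VegardLocalCoherence} applied to the tail levels $l\geq r$ (the paper phrases this as a $\sup_q$ over the oversampling parameter rather than your explicit decomposition $P_{M_{r-1}}^\perp=\sum_{l\geq r}P_{M_l}^{M_{l-1}}$, but the content is identical), then combine with Corollary~\ref{Lem:boundOnmuPNU}. Your observation about the spare factor $2^{-1/2}$ is also correct and harmless; the paper simply drops it to obtain the stated (slightly weaker) bound.
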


\begin{proof}
	We have that
	\begin{equation}
	\mu_{\mathbf{N},\mathbf{M}}(k,\infty) = \sqrt{\mu(P_{N_k}^{N_{k-1}}U P_{M_{r-1}}^\perp) \cdot \mu(P_{N_k}^{N_{k-1}}U)}.
	\end{equation}
	We know from Corollary \ref{Lem:boundOnmuPNU} that $\mu(P_{N_k}^{N_{k-1}}U) \leq C_{\mu}2^{-(J_0 + k-1)}$. Remark that $k<r$. Hence, we have with Theorem \ref{Th:VegardLocalCoherence} and the independence of the oversampling parameter $q$ that 
	\begin{align}
	\mu(P_{N_k}^{N_{k-1}}UP_{M_{r-1}}^\perp) 
	& = \sup_{q \in \R} \mu(P_{N_k}^{N_{k-1}}U P_{M_r}^{M_{r-1}})\\
	&\leq C_{\mu}2^{-J_0 -k}2^{-|r-k|}
	 = C_{\mu} \cdot 2^{-(J_0+r-1)}.	
	\end{align}
	We get 
	\begin{align}
	\mu_{\mathbf{N},\mathbf{M}}(k,\infty)
	& = \sqrt{\mu(P_{N_k}^{N_{k-1}}U P_{M_{r-1}}^\perp) \cdot \mu(P_{N_k}^{N_{k-1}}U)} \\
	& = \sqrt{\mu(P_{N_k}^{N_{k-1}}U P_{M_{r-1}}^\perp)} \cdot\sqrt{ \mu(P_{N_k}^{N_{k-1}}U)} \\
	& \leq C_{\mu}^{1/2} \cdot 2^{-(J_0+r-1)/2} C_{\mu}^{1/2} 2^{-(J_0+k-1)/2} \\
	&=  C_{\mu} 2^{-(J_0 + k-1)}2^{-(r-k)/2}.
	\end{align}
\end{proof}

Note that the same local coherence estimate was found for the Fourier-Haar case in \cite{NoteOnHaarFourier}.

\subsubsection{Relative sparsity estimate}\label{Ch:RelSparsity}

Now, we want to estimate the relative sparsity of the change of basis matrix $U$ in the Walsh-wavelet case. This is important to bound the local sparsity terms $\tilde s_k$ in Equation \eqref{Eq:TheoRef2} in Theorem \ref{Theo:MainRef}. To do so we recall from Definition \ref{Def:relativeSparsity} that
\begin{equation}
	\sqrt{S_k}= \max_{\eta \in \Theta} ||\PNk U \eta||_2.
\end{equation}
With the estimate from \cite{NoteOnHaarFourier} in \S 4.3. we get
\begin{equation}\label{Eq:relSparsityEstimates}
	\max_{\eta \in \Theta} ||\PNk U \eta||_2 \leq \max_{\eta \in \Theta} \sum_{l=1}^r ||\PNk U \PMl||_2||\PMl \eta||_2 \leq \sum_{l=1}^r ||\PNk U \PMl||_2 \sqrt{s_l},
\end{equation}
where we use that $$||\PMl \eta||_2 \leq \sqrt{||\PMl \eta||_0} = \sqrt{s_l}.$$
Hence, we need to bound $||\PNk U \PMl||_2$. For this sake we first bound $||P_N^\perp U P_M ||_2$ and $||\PNk U \PMl||_2$ in a second step in Lemma \ref{Lem:relativeSparsityHelp}. That is then finally used to bound the relative sparsity in Corollary \ref{Cor:relativeSparsity}.
\begin{lemma}\label{Lem:EstimateSSR}
	Let $U$ be the change of basis matrix for the Walsh measurements and boundary wavelets of order $p \geq 3$. Let the number of samples $N$ be larger than the number of reconstructed coefficients $M$. Then we have that
	\begin{equation}
		||P_N^\perp U P_M ||^2_2 \leq C_{rs} \cdot \left(\frac M N\right),
	\end{equation}
	where $C_{rs} = (16p-8)^2\max \left\{ C_\phi^2,C_{\phi^\#}^2\right\}$ is dependent on the wavelet.
\end{lemma}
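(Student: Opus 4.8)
The plan is to estimate the operator norm by duality against unit-norm reconstruction functions. Since the wavelets form an orthonormal basis, $\|P_N^\perp U P_M\|_2^2 = \sup\{\sum_{k>N}|\langle\varphi,\Wal(k,\cdot)\rangle|^2:\ \varphi\in\RM,\ \|\varphi\|_2=1\}$, so I fix such a $\varphi$ and write it in the finest-level scaling representation \eqref{Eq:repvarphi1d} (recall $M=2^R$), with coefficients $\alpha_n,\beta_n$ satisfying $\sum_n|\alpha_n|^2+\sum_n|\beta_n|^2=1$. Using \eqref{phii} and its analogue for $\phi^\#$ I split each $\phi_{R,n}$ (resp.\ $\phi_{R,n}^\#$) into the $2p-1$ rescaled translates $\phi_i(2^R\,\cdot-(n+i-1))$ of the functions $\phi_i$, which are supported in $[0,1]$, so that each summand is supported on a single dyadic interval of length $2^{-R}$; the handful of pieces that stick out of $[0,1]$ for boundary indices $n$ are treated exactly as in \cite{WalshSSR}.

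Next I pull the dyadic shift out of the Walsh transform. For $a=(n+i-1)2^{-R}$ and $k>N$, the multiplicative and scaling identities of the Walsh functions (the argument behind Lemma~\ref{Cor:scalingProp}, legitimate here because $a$ is an integer multiple of $2^{-R}$ and the remaining variable lies in $[0,2^{-R})$, so dyadic and ordinary addition coincide) give $\langle\phi_i(2^R(\,\cdot-a)),\Wal(k,\cdot)\rangle = 2^{-R}\Wal(k,a)\,\reallywidehat[W]{\phi_i}(2^{-R}k)$. Summing over $n$ and $i$ yields
\[
\langle\varphi,\Wal(k,\cdot)\rangle = 2^{-R/2}\sum_{i=-p+2}^{p}\reallywidehat[W]{\phi_i}(2^{-R}k)\,T_i(k)\;+\;2^{-R/2}\sum_{i=-p+2}^{p}\reallywidehat[W]{\phi_i^\#}(2^{-R}k)\,T_i^\#(k),
\]
where $T_i(k)=\sum_n\alpha_n\Wal(k,(n+i-1)2^{-R})$ and $T_i^\#$ is the analogous Walsh polynomial built from the $\beta_n$. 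The structural observation, which is what prevents a naive triangle inequality from producing an inferior $M^2/N$ bound, is that $\Wal(k,m2^{-R})$ depends on $k$ only through $k\bmod 2^R$, so each $T_i$ is $2^R$-periodic in $k$; and summing the squared modulus of $T_i$ over one period of length $2^R$ equals, by Lemma~\ref{Lemma2DphiSum} (equivalently, by the $2^R\times 2^R$ Hadamard matrix being $\sqrt{2^R}$ times a unitary), precisely $2^R\sum_n|\alpha_n|^2$, and similarly for $T_i^\#$ with $\sum_n|\beta_n|^2$.

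To finish I combine the orthogonality with the decay rate. Lemma~\ref{Lemma:Decayrate}, in the form valid for arbitrary real arguments noted after its statement, gives $|\reallywidehat[W]{\phi_i}(z)|\leq C_\phi/z$ and $|\reallywidehat[W]{\phi_i^\#}(z)|\leq C_{\phi^\#}/z$; writing $k=2^R\ell+\kappa$ with $\kappa\in\{0,\dots,2^R-1\}$ and $\ell\geq 1$ (forced by $k>N\geq M=2^R$), this is at most $C_\phi/\ell$, resp.\ $C_{\phi^\#}/\ell$. Applying Cauchy--Schwarz to the $2(2p-1)$ summands in the display and then summing over $k>N$ grouped into the blocks $\{2^R\ell,\dots,2^R(\ell+1)-1\}$ with $\ell\geq\ell_0:=\lfloor(N+1)/2^R\rfloor$, the periodicity collapses each block sum of $|T_i|^2$ to $2^R\sum_n|\alpha_n|^2$ (resp.\ $2^R\sum_n|\beta_n|^2$), the prefactor $2^{-R}$ cancels one power of $M=2^R$, and what remains is bounded by a constant multiple of $(2p-1)^2\max\{C_\phi^2,C_{\phi^\#}^2\}\bigl(\sum_n|\alpha_n|^2+\sum_n|\beta_n|^2\bigr)\sum_{\ell\geq\ell_0}\ell^{-2}$. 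Since $\ell_0\geq N/(2M)$ one has $\sum_{\ell\geq\ell_0}\ell^{-2}\leq 2/\ell_0\leq 4M/N$, and $\sum_n|\alpha_n|^2+\sum_n|\beta_n|^2=1$, so the bound takes the claimed form $C_{rs}\cdot(M/N)$ with $C_{rs}=(16p-8)^2\max\{C_\phi^2,C_{\phi^\#}^2\}$ (in fact the argument gives a somewhat smaller admissible constant).

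The step I expect to be most delicate is justifying rigorously that $\Wal(k,m2^{-R})$ depends on $k$ only modulo $2^R$ and deducing the exact Plancherel identity $\sum_{\kappa=0}^{2^R-1}|T_i(\kappa)|^2=2^R\sum_n|\alpha_n|^2$: this is where one must track dyadic expansions carefully, exploit the interplay of dyadic and ordinary addition together with the scaling property, and fit everything into the precise hypotheses of Lemma~\ref{Lemma2DphiSum}, while simultaneously checking that the boundary-corrected pieces (handled via \cite{WalshSSR}) do not destroy the clean block structure. Once that is in place, the decay input and the geometric-type summation over the blocks $\ell\geq\ell_0$ are routine.
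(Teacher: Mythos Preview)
Your proposal is correct and follows essentially the same route as the paper's proof: the scaling-space representation \eqref{Eq:repvarphi1d}, the splitting \eqref{phii} into pieces $\phi_i$ supported in $[0,1]$, pulling out the dyadic shift via Lemma~\ref{Cor:scalingProp} (the paper packages this with the auxiliary map $p_R$ to ensure positivity, which is what you allude to as ``treated exactly as in \cite{WalshSSR}''), the factorisation into a Walsh polynomial times $\reallywidehat[W]{\phi_i}(k/2^R)$, the $2^R$-periodicity combined with Lemma~\ref{Lemma2DphiSum} to collapse the polynomial part, and the decay Lemma~\ref{Lemma:Decayrate} for the block tail sum. The only cosmetic difference is that the paper writes the Walsh polynomial as $\Phi_i(k/2^R)$ and invokes one-periodicity in the second argument, whereas you phrase it as $T_i(k)$ being $2^R$-periodic in $k$; these are equivalent, and the delicate step you flag is precisely the content of the $p_R$ construction together with Lemma~\ref{Lemma2DphiSum}.
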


\begin{proof}
	We start with bounding $||P_N^\perp U P_M ||_2 $. We rewrite it as follows
	\begin{equation}
		||P_N^\perp U P_M ||_2  =  \sup _{\varphi \in \RM} ||P_{\SN}^\perp \varphi||_2.
	\end{equation}
	This value gets smaller if $N$ grows in relation to $M$. However, from a practical perspective it is desirable to take as few samples $N$ with in contrast a large number $M$. For the further analysis we define the fraction of these two by $S = \frac M N$.
	
	We include for completeness the intermediate steps, which are similar to the proof of the main theorem in \cite{WalshSSR}. However, we believe that this allows us to give a deeper understanding. Especially, the constant $C_{rs}$ is interesting to understand and to see what impacts its size.
	
	We first use the MRA property to rewrite $\varphi \in \RM$ as the sum of the elements in the related scaling space. Take in mind at this point that we only consider values of $M = 2^R$. Hence, we only jump from level to level. We have from \eqref{Eq:proof1varphi} for $\varphi \in \RM$ with $||\varphi||_2=1$
	\begin{equation}\label{Eq:proof1varphi}
	\varphi = \sum\limits_{l=0}^{2^R-p-1} \alpha_l \phi_{R,n} + \sum\limits_{l=2^R -p}^{2^R-1} \beta_l \phi_{R,n}^\# \text{ with } \sum\limits_{l=0}^{2^R-p-1} |\alpha_n|^2 + \sum\limits_{l = 2^R -p}^{2^R-1} |\beta_n|^2 = 1 .
	\end{equation}
	This reduces the problem of the sum of the inner products for the orthogonal projection from a lot of different wavelets to shifted scaling function at the same level. We have
	\begin{align}
		P_{\SN}^\perp \varphi 
		& = \sum_{k > N} \langle \Wal(k,\cdot),\varphi \rangle \\
		& = \sum_{k > N} \langle \Wal(k,\cdot), \sum\limits_{l=0}^{2^R-p-1} \alpha_l \phi_{R,n} + \sum\limits_{l=2^R -p}^{2^R-1} \beta_l \phi_{R,n}^\# \rangle \\
		& = \sum_{k > N} \sum\limits_{l=0}^{2^R-p-1} \alpha_l \langle \Wal(k,\cdot),   \phi_{R,n} \rangle + \sum_{k > N} \sum\limits_{l=2^R -p}^{2^R-1} \beta_l \langle \Wal(k,\cdot) , \phi_{R,n}^\# \rangle.
	\end{align}
	Hence, we start with controlling the inner product $\langle \Wal(k,\cdot), \phi_{R,n} \rangle$ and analogously $\langle \Wal(k,\cdot),\phi_{R,n}^\# \rangle$. Our aim is to remove the scaling and the shift from the wavelet and get instead the product between the Walsh transform of the original mother wavelet and a Walsh polynomial. For this we follow the ideas in \cite{WalshSSR}. Remember first, from the discussion in \S \ref{sec:wavelets} that the mother scaling function is divided into the sum of functions that are supported in $[0,1]$, i.e.	$\phi(x) = \sum\limits_{i=-p+2}^p \phi_i (x-i+1)$ with $\phi_i(x) = \phi(x+i-1) \mathcal{X}_{[0,1]}(x)$
	and hence 
	\begin{equation}
		\langle \Wal(k,\cdot), \phi_{R,n} \rangle = \sum_{i=-p+2}^p \langle \Wal(k,\cdot), \phi_{i,R,n} \rangle.
	\end{equation}
	This allows us to only deal with $\langle \Wal(k,\cdot), \phi_{i,R,n} \rangle$. We get with a variable change
	\begin{align}
		\langle \Wal(k,\cdot), \phi_{i,R,n} \rangle 
		& = 2^{-R/2}\int\limits_{2^{-R}(n+i-1)}^{2^{-R}(n+i)} \phi_i (2^R x - n -i +1) \Wal( k, x)dx \\
		& = 2^{-R/2} \int\limits_0^1 \phi_i (x)\Wal( k, 2^{-R}(x+n+i-1))dx.
	\end{align}
	Next, we use Lemma \ref{Cor:scalingProp} to get the shift out of the integral. We then only deal with the integral between the mother wavelet and the Walsh function independent on the summation factor $n$. For this we have to make sure that the second input is positive. We define $p_R : \mathbb Z \rightarrow \N$ to map $z$ to the the smallest integer with $p_R(z)2^R + z > 0$. This allows us to use Lemma \ref{Cor:scalingProp} because $x \in [0,1]$ and $n + i -1 +2^R p_R(n+i-1) \in \N$. We get
	\begin{align}
		& 2^{-R/2} \int\limits_0^1 \phi_i (x)\Wal( k, 2^{-R}(x+n+i-1))dx \\ 
		& = 2^{-R/2} \Wal(k, 2^{-R}(n+i-1 + 2^R p_R(i-1))) \int\limits_0^1 \phi_i(x)\Wal( k, 2^{-R}x)dx \\
		& = 2^{-R/2} \Wal(n+i-1 + 2^R p_R(i-1), \frac{k}{2^{R}})\reallywidehat[W]{\phi_i}(\frac{ k}{2^R}).
	\end{align}
	With this we are able to represent the inner product of every shifted version of $\phi_{i,R,n}$ with the Walsh function as product of the Walsh transform of $\phi_i$ and a Walsh function which contains the shift information. In the following we want to rewrite the inner products such that we are left with a Walsh polynomial and the Walsh transform of the mother scaling function. For this define
	\begin{align}
			\Phi_i(z) & = \sum\limits_{n=0}^{2^R-p-1} \alpha_n \Wal(n+i-1 + 2^R p_R(i-1),z) \text{ and }\\
			\Phi_i^\# (z) &= \sum\limits_{n = 2^R -p}^{2^R-1} \beta_n \Wal(n + i -1+ 2^R p_R(i -1),z).
	\end{align}
	We get
	\begin{align}
		\sum\limits_{n=0}^{2^R-p-1}  \alpha_n  \langle \phi_{i,R,n} , \Wal( k, \cdot) \rangle  =  2^{-R/2} \reallywidehat[W]{\phi_i}(\frac{ k}{2^R}) \Phi_i(\frac{ k}{2^R})
	\end{align}
	and
	\begin{equation}
		\sum\limits_{n = 2^R -p}^{2^R-1}  \beta_n  \langle \phi_{i,R,n}^\# , \Wal( k, x) \rangle  
		=  2^{-R/2} \reallywidehat[W]{\phi_i^\#}(\frac{ k}{2^R}) \Phi_i^\#(\frac{ k}{2^R}).
	\end{equation}
	After this evaluation we can go back to estimate the norm of $||P_N^\perp U P_M||_2$. We have  
	\begin{align}
		||P_N^\perp U P_M||_2
		& = \sup_{\varphi \in \RM} || P_{\SN}^\perp \varphi|| \\
		&= || P_{\mathcal{S}_N}^\perp
		(\sum\limits_{n=0}^{2^R-p-1} \alpha_n \sum\limits_{i=-p+2}^p \phi_{i,R,n} + \sum\limits_{n=2^R-p}^{2^R-1} \beta_n \sum\limits_{i=-p+2}^p \phi_{i,R,n}^\#) ||_2\\
		& \leq \sum\limits_{i=-p+2}^p || P_{\mathcal{S}_N}^\perp
		(\sum\limits_{n=0}^{2^R-p-1} \alpha_n \phi_{i,R,n} + \sum\limits_{n=2^R-p}^{2^R-1} \beta_n \phi_{i,R,n}^\#) ||_2 \\
		& = \sum\limits_{i=-p+2}^p \sqrt{\sum\limits_{k > N} 2^{-R} \left| \reallywidehat[W]{\phi_i}(\frac k {2^R}) \Phi_i (\frac k {2^R}) + \reallywidehat[W]{\phi^\#_i}(\frac k {2^R}) \Phi_i^\# (\frac k {2^R}) \right|^2},
	\end{align}
	where we used the presentation of $\varphi$ from \eqref{Eq:proof1varphi} and the Cauchy-Schwarz inequality in the third line.
	After multiplying out the brackets we are left with 
	\begin{align}\label{Eq:PartsOfNorm}
		\sum\limits_{k > N} 2^{-R} &\left| \reallywidehat[W]{\phi_i}(\frac k {2^R}) \Phi_i (\frac k {2^R}) \right|^2 \text{, } \sum\limits_{k > N} 2^{-R} \left| \reallywidehat[W]{\phi_i^\#}(\frac k {2^R}) \Phi_i^\# (\frac k {2^R}) \right|^2 \text{ and } \\
		& \sum\limits_{k > N} 2^{-R+1} \left| \reallywidehat[W]{\phi_i}(\frac k {2^R}) \Phi_i (\frac k {2^R}) \reallywidehat[W]{\phi_i^\#}(\frac k {2^R}) \Phi_i^\# (\frac k {2^R}) \right|
	\end{align}
	Because $\phi$ and $\phi^\#$ share the same decay rate, it is sufficient to only deal with \eqref{Eq:PartsOfNorm} and deduce the rest from it.
	To estimate these values, we use the one-periodicity of the Walsh functions. For this sake let $M=2^R$. We always reconstruct a full level as we do not know in which part of the level the information is located. 
	
	It can be observed that we divide $k$ in both functions by $2^R$. We want to separate the input $k/2^R$ into its integer and rational part. This allows us to use the one-periodicity of the Walsh function and to work for the decay rate of the wavelet under the Walsh function only with the major integer part. For this we replace $k = mM +j$, where $j = 0, \ldots, M-1$ and $m \geq S = N/M$.  
	This leads to
	\begin{align}\label{Eq:k=mL+j}
	\sum\limits_{k \geq N} 2^{-R} \left| \reallywidehat[W]{\phi_i}(\frac k {2^R}) \Phi_i (\frac k {2^R}) \right|^2 
	& \leq   \sum\limits_{j=0}^{M-1} \frac{1}{M} \left| \Phi_i(\frac{ j}{M}) \right|^2 \sum\limits_{m \geq S} \left|\reallywidehat[W]{\phi_i}\left(\frac j M + m\right)\right|^2 .
	\end{align}
	We estimate with Lemma \ref{Lemma:Decayrate} and the integral bound for series
	\begin{align}\label{Eq:EstimateSum2}
	\sum\limits_{m \geq S} \left|\reallywidehat[W]{\phi_i}\left(\frac j M + m\right)\right|^2 
	& \leq \sum\limits_{m \geq S} \frac {C_\phi^2}{ m^{2}} 
	\leq C_\phi^2 \left(\frac {1}{ S^{2}} +  \int\limits_{S}^\infty \frac {1}{x^{2}}dx \right) 
	\leq C_\phi^2\left(\frac 1 {S^{2}}+ \frac{1}{ S}\right) 
	\leq \frac{2C_\phi^2}{ S}.
	\end{align}
	Here $C_\phi$ depends on the choice of the wavelet. In contrast to the Fourier case there is no known relationship between the smoothness of the wavelet and the decay rate or the behaviour of $C_\phi$, as discussed in Remark \ref{Rem:RelationFourier}. 
	
	To estimate the sum over the Walsh polynomials we use Lemma \ref{Lemma2DphiSum} and \eqref{Eq:proof1varphi}. We get similar to computations in \cite{WalshSSR}, that
	\begin{align}\label{Eq:PhiRep}
	\Phi_i(z) & = \sum\limits_{n=0}^{2^R-p} \alpha_n \Wal(n + i -1 +  2^R p_R(i-1),z) \\ 
	& = \sum\limits_{n = i -1 +  2^R p_R(i-1)}^{2^R - p + i - 1 + 2^R p_R(i-1)} \alpha_{n -i +1 -2^R p_R(i-1)} \Wal(n,z) 
	\end{align}
	and hence
		\begin{equation}\label{Eq:BoundPhi}
	\sum\limits_{j=0}^{M-1} \frac 1 M \left| \Phi_i (\frac j M) \right|^2 = \sum\limits_{n =  i -1 +  2^R p_R(i-1)}^{ 2^R -p + i - 1 +  2^R p_R(i-1)} |\alpha_{n -i +1 -2^R p_R(i-1)}|^2 = \sum\limits_{n =  0}^{ 2^R -p } |\alpha_{n }|^2 \leq 1.
	\end{equation} 
	The analogue holds true for the $\phi^\#$ part. We again replace $k = mM +j$ and obtain
	\begin{align}\label{Eq:k=mL+j2}
	\sum\limits_{k \geq N} 2^{-R} \left| \reallywidehat[W]{\phi_i^\#}(\frac k {2^R}) \Phi_i (\frac k {2^R}) \right|^2 
	& \leq   \sum\limits_{j=0}^{M-1} \frac{1}{M} \left| \Phi_i^\#(\frac{ j}{M}) \right|^2 \sum\limits_{m \geq S} \left|\reallywidehat[W]{\phi_i^\#}\left(\frac j M + m\right)\right|^2 .
	\end{align}
	With the same reasoning as in \eqref{Eq:EstimateSum2} we get
	\begin{align}\label{Eq:EstimateSum22}
	\sum\limits_{m \geq S} \left|\reallywidehat[W]{\phi_i^\#}\left(\frac j M + m\right)\right|^2 
	\leq \frac{2C_\phi^2}{ S}.
	\end{align}
	Finally, for the Walsh polynomial we have
	\begin{equation}\label{Eq:BoundPhi2}
	\sum\limits_{j=0}^{M-1} \frac 1 M \left| \Phi_i^\# (\frac j M) \right|^2 = \sum\limits_{n = 2^R -p + i -1 +  2^R p_R(i-1)}^{ 2^R -1 + i - 1 +  2^R p_R(i-1)} |\beta_{n -i +1 -2^R p_R(i-1)}|^2 = \sum\limits_{n =  2^R - p}^{ 2^R -1 } |\beta_{n }|^2 \leq 1.
	\end{equation} 
	We get together
	\begin{align}
		||P_N^\perp U P_M ||_2 
		& \leq   \sum\limits_{i=-p+2}^p \left( \sum\limits_{k \geq M} 2^{-R} \left| \reallywidehat[W]{\phi_i}(\frac k {2^R}) \Phi_i (\frac k {2^R}) \right|^2 + \sum\limits_{k \geq M} 2^{-R} \left|\reallywidehat[W]{\phi^\#_i}(\frac k {2^R}) \Phi_i^\# (\frac k {2^R}) \right|^2 \right. \\
		& \left. + 2 \left( \sum\limits_{k \geq M} 2^{-R} \left| \reallywidehat[W]{\phi_i}(\frac k {2^R}) \Phi_i (\frac k {2^R}) \right|^2 \right)^{1/2} \left( \sum\limits_{k \geq M} 2^{-R} \left| \reallywidehat[W]{\phi^\#_i}(\frac k {2^R}) \Phi_i^\# (\frac k {2^R}) \right|^2 \right)^{1/2} \right)^{1/2}. \\
		& \leq \sum\limits_{i=-p+2}^{p} \left( \frac {2C_\phi^2} {S} + \frac{{2C_{\phi^\#}}^2}{S} + 4 \frac {C_\phi C_{\phi^\#} }{S} \right)^{1/2} 
		 \leq \frac 8 {S^{1/2}} (2p-1)  \max \left\{ C_\phi^2,{C_{\phi^\#}}^2\right\}^{1/2}.
	\end{align}
	When we now replace $S = N/M$ and set $C_{rs} = (16p-8)^2\max \left\{ C_\phi^2,{C_{\phi^\#}}^2\right\}$ we get
	\begin{equation}
		||P_N^\perp U P_M ||_2^2 \leq C_{rs} ~ \frac M N.
	\end{equation}
\end{proof}

In the next Lemma we estimate $||\PNk U \PMl||_2^2$ using the previous result.

\begin{lemma}\label{Lem:relativeSparsityHelp}
	Let $U$ be the change of basis matrix given by the Walsh measurements and boundary wavelets of order $p \geq 3$. Then we have that 
	\begin{equation}
		||\PNk U \PMl||_2^2 \leq C_{\max} \cdot 2^{-|l-k|+1},
	\end{equation}
	where $C_{\max} = \max\left\{ C_{\mu}, C_{rs} \right\}$. 
\end{lemma}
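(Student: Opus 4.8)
The plan is to bound $\|\PNk U\PMl\|_2$ by splitting into the regimes $l<k$ and $l\ge k$ and, in each, dominating $\PNk U\PMl$ by a one‑sided tail block whose operator norm is already under control.

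\textbf{Case $l<k$.} Since the row index set $\{N_{k-1}+1,\ldots,N_k\}$ is contained in $\{N_{k-1}+1,N_{k-1}+2,\ldots\}$ and the column index set $\{M_{l-1}+1,\ldots,M_l\}$ is contained in $\{1,\ldots,M_l\}$, the matrix $\PNk U\PMl$ is a submatrix of $P_{N_{k-1}}^\perp U P_{M_l}$, so $\|\PNk U\PMl\|_2\le\|P_{N_{k-1}}^\perp U P_{M_l}\|_2$. By \eqref{Eq:M} and \eqref{Eq:N}, $N_{k-1}=2^{J_0+k-1}\ge 2^{J_0+l}=M_l$ because $l\le k-1$, so Lemma \ref{Lem:EstimateSSR} applies with $N=N_{k-1}$ and $M=M_l$ and gives
\begin{equation*}
\|\PNk U\PMl\|_2^2\le\|P_{N_{k-1}}^\perp U P_{M_l}\|_2^2\le C_{rs}\,\frac{M_l}{N_{k-1}}=C_{rs}\,2^{-(k-l)+1}=C_{rs}\,2^{-|l-k|+1},
\end{equation*}
which is the assertion with $C_{rs}\le C_{\max}$ in front.

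\textbf{Case $l\ge k$.} Here I would dominate in the other direction, observing that $\PNk U\PMl$ is a submatrix of $P_{N_k}U P_{M_{l-1}}^\perp$, so $\|\PNk U\PMl\|_2\le\|P_{N_k}U P_{M_{l-1}}^\perp\|_2$. It then suffices to establish the transpose of Lemma \ref{Lem:EstimateSSR}, namely $\|P_N U P_M^\perp\|_2^2\le C_\mu\, N/M$ for $M\ge N$; applied with $N=N_k$ and $M=M_{l-1}=2^{J_0+l-1}$ this gives the ratio $N_k/M_{l-1}=2^{-(l-k)+1}$. I would prove this companion bound by the same mechanism as Lemma \ref{Lem:EstimateSSR}: write $\|P_{N_k}U P_{M_{l-1}}^\perp\|_2$ as the supremum of $\|P_{\mathcal{S}_{N_k}}\varphi\|_2$ over unit vectors $\varphi\in\mathcal{R}_{M_{l-1}}^\perp$ (i.e.\ purely fine wavelet expansions), decompose $\varphi$ into its wavelet‑level components $\varphi_{l'}$, $l'\ge l$, replace each $\varphi_{l'}$ by shifted scaling functions supported in $[0,1]$ at scale $2^{-(J_0+l'-1)}$ as in \eqref{phii}, pull the shifts out of the inner products with the Walsh functions via Lemma \ref{Cor:scalingProp} to collect Walsh polynomials, bound each shift‑polynomial sum over the sampling block by $\|\varphi_{l'}\|_2^2$ with Lemma \ref{Lemma2DphiSum}, and sum the resulting geometric series in $l'$. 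The one genuine difference from Lemma \ref{Lem:EstimateSSR} is that the Walsh‑transform factors now carry argument $n/2^{J_0+l'-1}<1$ (since $n\le N_k\le 2^{J_0+l'-1}$), where the decay estimate of Lemma \ref{Lemma:Decayrate} is vacuous; in that range one instead controls $|\langle\Wal(n,\cdot),\psi^b_{j,m}\rangle|$ by the local‑coherence bound of Theorem \ref{Th:VegardLocalCoherence} together with Corollary \ref{Lem:boundOnmuPNU}, which is precisely what supplies the constant $C_\mu$. The finitely many near‑diagonal cases (in particular $l=k$, where $M\ge N$ just fails) are covered either by the same computation or by the trivial bound $\|\PNk U\PMl\|_2\le\|U\|_2=1\le 2^{-|l-k|+1}$.

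Combining the two regimes with $C_{\max}=\max\{C_\mu,C_{rs}\}$ completes the proof. I expect the case $l\ge k$ to be the main obstacle: the block $\PNk U\PMl$ has of order $2^{J_0+l-1}$ columns, so a bound based on the size of its individual entries only---for instance the local coherence together with a count of the block dimensions---reintroduces a factor $2^{J_0}$ and destroys the decay in $|l-k|$. The point is to genuinely exploit the near‑orthogonality of the shifted Walsh polynomials through Lemma \ref{Lemma2DphiSum}, so that this dimensional factor cancels against the $2^{-(J_0+l)}$ bound on the entries, leaving the clean $2^{-|l-k|}$ behaviour.
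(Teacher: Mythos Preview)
Your treatment of the case $l<k$ is exactly the paper's: embed $\PNk U\PMl$ into $P_{N_{k-1}}^\perp U P_{M_l}$ and invoke Lemma~\ref{Lem:EstimateSSR} with $N=N_{k-1}\ge M_l$.

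For $l\ge k$ the paper does something far shorter than what you outline. It does \emph{not} set up a transpose of Lemma~\ref{Lem:EstimateSSR}, nor does it pass to $P_{N_k}UP_{M_{l-1}}^\perp$ and decompose into wavelet levels; it stays on the block $\PNk U\PMl$ and uses only the entrywise bound $\mu(\PNk U\PMl)\le C_\mu 2^{-(J_0+l-1)}$ from Theorem~\ref{Th:VegardLocalCoherence}. For a unit vector $z$ supported on $\{M_{l-1}+1,\ldots,M_l\}$ the paper writes
\[
\|\PNk U\PMl z\|_2^2=\sum_{i=N_{k-1}+1}^{N_k}\sum_{j=M_{l-1}+1}^{M_l}|u_{i,j}z_j|^2\le\max_{i,j}|u_{i,j}|^2\,(N_k-N_{k-1})\sum_j|z_j|^2\le C_\mu\,2^{-(J_0+l-1)}\cdot 2^{J_0+k},
\]
which is $C_\mu 2^{-|k-l|+1}$. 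So in the paper's computation the column count never appears: the sum over $j$ collapses to $\|z\|_2^2=1$ and only the row count $N_k-N_{k-1}\le 2^{J_0+k}$ survives, which is what produces the factor $2^{k-l}$ against $2^{-(J_0+l-1)}$. Your warning that ``local coherence together with a count of the block dimensions reintroduces a factor $2^{J_0}$'' is exactly right for a Frobenius-type bound, but the paper avoids it by this row-count-only estimate. What your more elaborate route (dualising Lemma~\ref{Lem:EstimateSSR} and re-running the Walsh-polynomial/shift machinery via Lemma~\ref{Lemma2DphiSum}) would buy is independence from the first equality displayed above, which amounts to treating the truncated columns $\PNk Ue_j$ as orthogonal; the paper is content with the two-line computation from the coherence bound.
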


\begin{proof}
	We use similar estimates as in Corollary \ref{Cor:localCoherencefin}. We know from Lemma \ref{Lem:EstimateSSR} that $||P_N^\perp U P_M ||_2^2 \leq C_{rs} \cdot \left(\frac M N\right)$, whenever $N \geq M$. With this we get for $k > l$:
		\begin{align}
			||P_{N_k}^{N_{k-1}} U P_{M_l}^{M_{l-1}}||_2^2 
			&\leq ||P_{N_{k-1}}^\perp U P_{M_l}||_2^2 
			\leq C_{rs} \left(\frac {M_l} {N_{k-1}} \right) \\
			&= C_{rs} \left(2^{(J_0 + l)} \cdot 2^{-(J_0+k-1)} \right) 
			= C_{rs} \cdot 2^{(l-k)+1} = C_{rs} \cdot 2^{-|l-k|+1},
		\end{align}
	where the first inequality enlarges the section of the matrix and with it the norm. This allows to use Lemma \ref{Lem:EstimateSSR}. The rest follows from the definition of $M_l$ and $N_{k-1}$ in \S \ref{Ch:Ordering}.
	For $l \geq k$ we get from Theorem \ref{Th:VegardLocalCoherence}
		\begin{equation}
			\max_{i=N_{k-1}+1, \ldots,N_k} \max_{j=M_{l-1}+1, \ldots,M_l} |u_{i,j}|^2  = 
			\mu(P_{N_k}^{N_{k-1}} U P_{M_l}^{M_{l-1}})
						\leq  C_{\mu} 2^{-(J_0+l-1)}.
		\end{equation}
	With this we conclude 
		\begin{align}
			||\PNk U \PMl||_2^2 
			& = \sup_{z \in \mathbb{C}^{2^{(J_0+l-1)}}, ||z||_2 =1} \sum_{i=N_{k-1}+1}^{N_k} \sum_{j =M_{l-1}+1}^{M_l} |u_{i,j} z_j|^2 \\
			& \leq \sup_{z \in \mathbb{C}^{2^{(J_0+l-1)}}, ||z||_2 =1} \max_{i=N_{k-1}+1, \ldots,N_k} \max_{j=M_{l-1}+1, \ldots,M_l} |u_{i,j}|^2 \sum_{i=N_{k-1}+1}^{N_k}  \sum_{j =M_{l-1}+1}^{M_l} |z_j|^2 \\
			& \leq\sup_{z \in \mathbb{C}^{2^{(J_0+l-1)}}, ||z||_2 =1} C_{\mu} \cdot 2^{-(J_0+l-1)}~2^{J_0 + k}  \sum_{j =M_{l-1}+1}^{M_l} |z_j|^2 \\
			& \leq C_{\mu} 2^{(J_0 + k) -(J_0 +l-1)} =C_{\mu} 2^{(k-l)+1} = C_{\mu}  2^{-|k-l|+1}.
		\end{align}
	Both equations combined lead to the desired estimate with $C_{\max} = \max \left\{ C_{\mu}, C_{rs} \right\}$.
\end{proof}

With this Lemma at hand we can now bound the relative sparsity $S_k(\mathbf N, \mathbf M, \mathbf s)$.

\begin{corollary}\label{Cor:relativeSparsity}
	For the setting as before we have
	\begin{equation}
		S_k (\mathbf N, \mathbf M, \mathbf s) \leq 2C_{\text{geo}}C_{\max} \sum_{l=0}^{r-1} 2^{-|k-l|/2} s_l.
	\end{equation}
\end{corollary}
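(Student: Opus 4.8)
The plan is to feed the block-norm bound of Lemma~\ref{Lem:relativeSparsityHelp} into the triangle-inequality estimate \eqref{Eq:relSparsityEstimates} and then tidy up with Cauchy--Schwarz and a geometric series. Recall from \eqref{Eq:relSparsityEstimates} that, since every $\eta \in \Theta$ satisfies $\|\PMl\eta\|_2 \le \sqrt{\|\PMl\eta\|_0} = \sqrt{s_l}$ (using $\|\eta\|_\infty \le 1$),
\[
\sqrt{S_k} \;=\; \max_{\eta \in \Theta}\|\PNk U \eta\|_2 \;\le\; \sum_{l} \|\PNk U \PMl\|_2\,\sqrt{s_l}.
\]
Lemma~\ref{Lem:relativeSparsityHelp} gives $\|\PNk U \PMl\|_2 \le \sqrt{2C_{\max}}\,2^{-|k-l|/2}$, so substituting yields
\[
\sqrt{S_k} \;\le\; \sqrt{2C_{\max}}\,\sum_{l} 2^{-|k-l|/2}\sqrt{s_l}.
\]

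Next I would square both sides and apply Cauchy--Schwarz, splitting each summand as $2^{-|k-l|/4}\cdot\bigl(2^{-|k-l|/4}\sqrt{s_l}\bigr)$:
\[
\Bigl(\sum_{l} 2^{-|k-l|/2}\sqrt{s_l}\Bigr)^{2} \;\le\; \Bigl(\sum_{l} 2^{-|k-l|/2}\Bigr)\Bigl(\sum_{l} 2^{-|k-l|/2} s_l\Bigr).
\]
The first factor on the right is a two-sided geometric series, bounded uniformly in $k$ by $C_{\text{geo}} := \sum_{l\in\Z} 2^{-|l|/2} = 3 + 2\sqrt{2}$; this is precisely where the constant $C_{\text{geo}}$ enters. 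Combining the displays gives $S_k \le 2C_{\max}C_{\text{geo}}\sum_{l} 2^{-|k-l|/2}s_l$, which is the claimed bound (after matching the summation index to the level convention of \S\ref{Ch:Ordering}).

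There is no genuinely hard step here; essentially all of it is bookkeeping. The one point deserving a second look is the tail, i.e.\ the last level: the sum over $l$ must account for the infinite-dimensional part, but this is already subsumed in Lemma~\ref{Lem:relativeSparsityHelp} (and, at the level of coherences, Corollary~\ref{Cor:LocalCoherenceInf}), whose proof in the case $k>l$ uses only $\|P_N^\perp U P_M\|_2$ from Lemma~\ref{Lem:EstimateSSR} and is insensitive to the oversampling parameter $q$. One should also keep the order $p\ge 3$ hypothesis in force, since it underlies Theorem~\ref{Th:VegardLocalCoherence}, which is used in the $l \ge k$ branch of Lemma~\ref{Lem:relativeSparsityHelp}.
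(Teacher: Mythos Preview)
Your proposal is correct and follows essentially the same route as the paper: feed the block-norm bound from Lemma~\ref{Lem:relativeSparsityHelp} into \eqref{Eq:relSparsityEstimates}, square, apply Cauchy--Schwarz with the split $2^{-|k-l|/4}\cdot 2^{-|k-l|/4}\sqrt{s_l}$, and absorb the resulting geometric sum into $C_{\text{geo}}$. Your extra remarks on the tail level and the $p\ge 3$ hypothesis are sound but not needed for the argument itself.
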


\begin{proof}
	With the estimates from and Equation \eqref{Eq:relSparsityEstimates} we get
	\begin{align}
		S_k 
		& = \left( \sum_{l=1}^r ||\PNk U \PMl||_2 \sqrt{s_l} \right)^2
		= 2 C_{\max}  \left(\sum_{l=1}^r 2^{-|k-l|/2} \sqrt{s_l}\right)^2 \\
		&\leq 2 C_{\max} \sum_{l=1}^r 2^{-|k-l|/2} \sum_{l=1}^r 2^{-|k-l|/2} s_l 
		\leq 2 C_{\text{geo}} C_{\max} \sum_{l=1}^r 2^{-|k-l|/2} s_l.
	\end{align}
	The third inequality follows from the Cauchy-Schwarz inequality and the last line from the fact the notation of $\sum_{l=1}^r 2^{-|k-l|/2}= C_k \leq C_{\text{geo}}$ for all $k$.
\end{proof}

\subsubsection{Bounding $\tilde M$}

In the estimate of Theorem \ref{Theo:MainRef} we have the value $\tilde{M}$. We aim to bound this value to reduce the number of free parameters. Hence, we estimate 
\begin{equation}
\tilde{M} = \min \left\{ i \in \N: \max_{m \geq i} ||P_N U e_m ||_2 \leq \frac 1 {32K\sqrt{s}} \right\}.
\end{equation}
We start with the following calculation with $m = 2^{(J_0+n)} = M_{n} \geq N$
\begin{align}
||P_N U e_m||_2 = \left( \sum_{i=1}^N |u_{i,m}|^2 \right)^{1/2} \leq \left( C_{\mu}  N \cdot 2^{-(J_0+n)}\right)^{1/2}.
\end{align}
Hence, for $2^{(J_0 +n)} \geq C_{\mu} \cdot N \cdot \left( 32 K \sqrt{s} \right)^2$ we have
\begin{equation}
||P_N U e_m ||_2 \leq \frac 1 {32K \sqrt{s}}.
\end{equation}
Therefore, 
\begin{equation}\label{Eq:Mtildebound}
	\tilde{M} \leq C_{\mu} \cdot \lceil N 32^2 K^2 s \rceil.
\end{equation}

\subsubsection{Balancing Property}

In this chapter we show that the first assumption of Theorem \ref{Theo:MainRef} is fulfilled.

For this sake, we use the results from the previous chapter, especially Lemma \ref{Lem:EstimateSSR}. We start with relating a bound on $||P_N^\perp U P_M||_2$ to the relationship between $N$ and $M$.  

\begin{corollary}\label{Lem:SSRWalshTheo}
	Let $\mathcal{S}$ and $\mathcal{R}$ be the sampling and reconstruction space spanned by the Walsh functions and separable boundary wavelets of order $p \geq 3$ respectively. Moreover, let $M = 2^{R}$ with $R \in \mathbb{N}$. Then, we get for all $\theta \in (1, \infty)$ 
	\begin{equation}
	||P_N^\perp U P_M||_2  \leq \theta,
	\end{equation}
	whenever
	\begin{equation}
	N \geq C_{rs} \cdot M \cdot \theta ^{-2}.
	\end{equation}
\end{corollary}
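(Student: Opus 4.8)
The plan is to read Corollary \ref{Lem:SSRWalshTheo} directly off Lemma \ref{Lem:EstimateSSR}. Recall that Lemma \ref{Lem:EstimateSSR} gives
\begin{equation*}
	\|P_N^\perp U P_M\|_2^2 \leq C_{rs} \cdot \frac{M}{N},
\end{equation*}
valid whenever $N \geq M$. So first I would simply take square roots: $\|P_N^\perp U P_M\|_2 \leq \sqrt{C_{rs} M / N}$.

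Next, I would observe that the claimed bound $\|P_N^\perp U P_M\|_2 \leq \theta$ follows as soon as $C_{rs} M / N \leq \theta^2$, i.e. as soon as $N \geq C_{rs} M \theta^{-2}$, which is precisely the stated hypothesis. This is the whole content of the corollary — it is a cosmetic restatement of Lemma \ref{Lem:EstimateSSR} with the roles of the quantities swapped, expressing the required sampling-to-reconstruction ratio $N/M$ in terms of a target operator-norm bound $\theta$ rather than the other way around.

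The one genuine point to check is the side condition $N \geq M$ under which Lemma \ref{Lem:EstimateSSR} was proved: I would note that since $\theta \in (1,\infty)$ and (by its definition in Lemma \ref{Lem:EstimateSSR}) $C_{rs} = (16p-8)^2 \max\{C_\phi^2, C_{\phi^\#}^2\} \geq 1$ for $p \geq 3$, the hypothesis $N \geq C_{rs} M \theta^{-2}$ does not by itself force $N \geq M$ once $\theta$ is large; however, for $\theta \geq \sqrt{C_{rs}}$ the bound $\|P_N^\perp U P_M\|_2 \leq \theta$ is vacuous anyway because $\|P_N^\perp U P_M\|_2 \leq \|U\|_2 = 1 < \theta$, while for $1 < \theta < \sqrt{C_{rs}}$ the hypothesis gives $N \geq C_{rs} M \theta^{-2} > M$, so Lemma \ref{Lem:EstimateSSR} applies. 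Thus the only mild obstacle is bookkeeping around when Lemma \ref{Lem:EstimateSSR} is in force versus when the conclusion is trivial; in both regimes the inequality holds. Chaining the two displayed inequalities then completes the proof.

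This corollary is then used in the subsequent subsection to verify the strong balancing property: choosing $\theta$ appropriately small (proportional to $(\log^{1/2}(4\sqrt{s}KM))^{-1}$ in the first balancing inequality and to a constant in the second) turns the condition $N \geq C_{rs} M \theta^{-2}$ into the quadratic relation $N \gtrsim M^2 \log(4MK\sqrt{s})$ appearing in \eqref{Eq:MainTheoAssumptionNM}, after also controlling the $\ell^\infty \to \ell^\infty$ norms by $\ell^2 \to \ell^2$ norms at the cost of factors of $\sqrt{M}$.
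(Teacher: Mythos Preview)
Your proposal is correct and follows essentially the same route as the paper: rewrite the hypothesis $N \geq C_{rs} M \theta^{-2}$ as $\theta^2 \geq C_{rs} M/N$ and apply Lemma \ref{Lem:EstimateSSR}. Your treatment of the side condition $N \geq M$ is in fact more careful than the paper's, which simply invokes ``$\theta > 1$'' without further comment; your added remarks on the downstream use in the balancing property are also accurate.
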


\begin{proof}
	Rewriting $N \geq C_{rs} \cdot M \cdot \theta ^{-2}$ gives us
	\begin{equation}
		\theta^2 \geq C_{rs} \frac M N.
	\end{equation}
	And hence with Lemma \ref{Lem:EstimateSSR} and $\theta >1$ we get
	\begin{equation}
		||P_N^\perp U P_M||_2 \leq \left( C_{rs} \cdot \frac M N \right)^{1/2} \leq \theta.
	\end{equation}
\end{proof}

With this at hand we can now proof the relation between $N,M$ such that the strong balancing property is satisfied. 

\begin{lemma}\label{Lem:BalancingProperty}
	For the setting as before $N,K$ satisfy the strong balancing property with respect to $U,M$ and $s$ whenever $N \gtrsim M^2 (\log(4MK\sqrt s ))$.
\end{lemma}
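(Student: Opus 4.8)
The plan is to reduce the strong balancing property — which consists of the two $\ell^\infty\to\ell^\infty$ estimates on $P_M U^* P_N U P_M - P_M$ and on $P_M^\perp U^* P_N U P_M$ — to a single $\ell^2\to\ell^2$ quantity, namely $\|P_N^\perp U P_M\|_2$, which is already controlled: Lemma \ref{Lem:EstimateSSR} (equivalently Corollary \ref{Lem:SSRWalshTheo}) gives $\|P_N^\perp U P_M\|_2^2 \leq C_{rs}\,M/N$. Note that $M = M_r = 2^{J_0+r}$ is a power of $2$ and, under the hypothesis $N\gtrsim M^2\log(4MK\sqrt s)$, we have $N\geq M$, so the lemma applies.

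First I would use that $U$ is an isometry, so $P_M U^* U P_M = P_M$ and $P_M^\perp U^* U P_M = 0$; writing $P_N = I - P_N^\perp$ then gives $P_M U^* P_N U P_M - P_M = -P_M U^* P_N^\perp U P_M = -(P_N^\perp U P_M)^*(P_N^\perp U P_M)$ and $P_M^\perp U^* P_N U P_M = -P_M^\perp U^* P_N^\perp U P_M = -(P_N^\perp U P_M^\perp)^*(P_N^\perp U P_M)$. Taking $\ell^2$ operator norms, the first equals $\|P_N^\perp U P_M\|_2^2 \leq C_{rs}\,M/N$, while the second is at most $\|P_N^\perp U P_M^\perp\|_2\,\|P_N^\perp U P_M\|_2 \leq \|P_N^\perp U P_M\|_2 \leq (C_{rs}\,M/N)^{1/2}$, using $\|P_N^\perp U P_M^\perp\|_2 \leq \|U\|_2 = 1$.

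Next I would pass from the $\ell^2$ norm to the $\ell^\infty\to\ell^\infty$ norm. Both operators in question are supported on their first $M$ columns, so each row has at most $M$ nonzero entries, and Cauchy--Schwarz together with the fact that $\|\cdot\|_2$ dominates row norms gives $\|A\|_{\ell^\infty\to\ell^\infty} = \sup_i \sum_{j\leq M}|A_{ij}| \leq \sqrt M\,\|A\|_2$. This yields $\|P_M U^* P_N U P_M - P_M\|_{\ell^\infty\to\ell^\infty} \leq C_{rs}\,M^{3/2}/N$ and $\|P_M^\perp U^* P_N U P_M\|_{\ell^\infty\to\ell^\infty} \leq C_{rs}^{1/2}\,M/N^{1/2}$. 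Requiring the first to be $\leq \tfrac{1}{8}(\log^{1/2}(4\sqrt s KM))^{-1}$ forces $N \geq 8 C_{rs}\,M^{3/2}\log^{1/2}(4MK\sqrt s)$, and requiring the second to be $\leq \tfrac{1}{8}$ forces $N \geq 64\,C_{rs}\,M^2$. Since $\log(4MK\sqrt s)\geq 1$, hence $M^{1/2}\log^{1/2}(4MK\sqrt s)\geq 1$, both requirements follow from $N\gtrsim M^2\log(4MK\sqrt s)$ for a suitable implied constant, which finishes the argument.

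I do not expect a deep obstacle: the estimate is essentially a bookkeeping consequence of Lemma \ref{Lem:EstimateSSR}. The one point that needs care is the asymmetry between the two operators — the first factors as $T^*T$ and so picks up a \emph{square} of $\|P_N^\perp U P_M\|_2$, whereas the second sees only a single power because its other factor $P_N^\perp U P_M^\perp$ is merely bounded, not small. It is therefore the second estimate, combined with the extra $\sqrt M$ lost in the $\ell^2\to\ell^\infty$ conversion, that produces the quadratic relation $N\gtrsim M^2$, while the logarithmic factor enters through the sharper threshold in the first estimate.
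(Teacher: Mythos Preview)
Your proof is correct and follows essentially the same strategy as the paper: rewrite both operators using $P_N = I - P_N^\perp$ and the isometry of $U$, then bound the $\ell^\infty\to\ell^\infty$ norm by $\sqrt{M}$ times the $\ell^2$ operator norm, and finally invoke Lemma~\ref{Lem:EstimateSSR}.

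There is one small difference in bookkeeping worth noting. You exploit the factorisation $P_M U^* P_N^\perp U P_M = T^*T$ with $T = P_N^\perp U P_M$ to get $\|T^*T\|_2 = \|T\|_2^2 \leq C_{rs} M/N$, which yields the milder requirement $N \gtrsim M^{3/2}\log^{1/2}(4MK\sqrt{s})$ from the first inequality, and then the quadratic $N \gtrsim M^2$ enters through the \emph{second} inequality. The paper instead bounds both operators uniformly by $\sqrt{M}\,\|P_N^\perp U P_M\|_2$ (using only a single factor of $\|T\|_2$, via $\|P_M U^*\|_2 \leq 1$), so that the quadratic relation \emph{and} the logarithm both come from the first inequality. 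Your route is slightly sharper in principle (it shows $N \gtrsim \max\{M^2,\,M^{3/2}\log^{1/2}(4MK\sqrt{s})\}$ suffices), but since the target statement is $N \gtrsim M^2\log(4MK\sqrt{s})$ the difference is immaterial.
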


\begin{proof}
		From Lemma \ref{Lem:SSRWalshTheo} we have that $||P_N^\perp U P_M ||_2 \leq \frac 1 {8 \sqrt{M}} \left(\log^{1/2}(4 KM \sqrt s)\right)^{-1}$ whenever it holds that $N \gtrsim M^{2} \left(\log(4 KM \sqrt s)\right)$. Using additionally that $U$ is an isometry we get
	\begin{align}\label{Eq:HelpInverse}
	& ||P_M U^* P_N U P_M - P_M ||_{\infty} 
	= || P_M U^* P_N U P_M - P_M U^* I U P_M||_{\infty} \\
	& = ||P_M U^* P_N^\perp U P_M ||_{\infty} 
	\leq \sqrt{M} || P_N^\perp U P_M ||_{2} \leq \frac 1 8 \left(\log^{1/2}(4 KM \sqrt s)\right)^{-1}.
	\end{align}
	For the second inequality we have that 
		\begin{align}
	&||P_M^\perp U^* P_N U P_M ||_{\infty}
	= ||P_M^\perp U^* P_N U P_M + P_M^\perp U^* I U P_M||_{\infty} \\
	&= ||P_M^\perp U^* P_N^\perp U P_M||_{\infty} 
	\leq \sqrt{M} || P_N^\perp U P_M ||_{2} \leq  \frac 1 8 \left(\log^{1/2}(4 KM \sqrt s)\right)^{-1} \leq \frac 1 8.
	\end{align}
	The last inequality follows from the fact that $K,M,s$ are integers and therefore $\log(4KM\sqrt s) \geq 1$. Hence, the strong balancing property is fulfilled.
\end{proof}

\subsection{Proof of the main theorem}\label{Ch:Mainproof}

In this chapter we bring the previous results together to proof Theorem \ref{Theo:Main}.

\begin{proof}[Proof of Theorem \ref{Theo:Main}]
	We show that the assumptions of Theorem $5.3.$ in \cite{breaking} are fulfilled. Moreover, we follow the lines of \cite{NoteOnHaarFourier}.
	
	With Lemma \ref{Lem:BalancingProperty} we have that $N,K$ satisfy the strong balancing property with respect to $U,M$ and $s$. Hence, point \eqref{Item1:TheoMainRef} in Theorem \ref{Theo:MainRef} is fulfilled.

	The last two steps show that \eqref{Eq:TheoRef1} and \eqref{Eq:TheoRef2} are fulfilled and Theorem \ref{Theo:MainRef} can be applied.
	We have
	\begin{align}
		& \frac {N_{k}-N_{k-1}}{m_k} \log (\epsilon^{-1}) \left( \sum_{l=1}^r \mu_{\mathbf{N},\mathbf{M}}(k,l) s_l \right) \log (K \tilde M \sqrt s) \\
		& \leq \frac {N_{k}-N_{k-1}}{m_k} \log (\epsilon^{-1}) \log(32^2 C_{\mu} NK^3s^{3/2})\\
		& \left( \sum_{l=1}^{r-1} C_{\mu} 2^{-1/2} 2^{-(J_0 + k-1)} 2^{-|k-l|/2} s_l + C_{\mu} 2^{-(J_0 + k-1)}2^{-(r-k)/2} s_r \right)  \\
		& =  C_{\mu} \frac{\log(\epsilon^{-1})}{m_k} \frac{N_{k}-N_{k-1}}{N_{k-1}} \left( \sum_{l=1}^r 2^{-|k-l|/2} s_l \right) \log(32 C_{\mu} NK^3s^{3/2}),
	\end{align}
	where we used the estimate of $\mu_{\mathbf{N},\mathbf{M}}$ from Corollary \ref{Cor:localCoherencefin} and \ref{Cor:LocalCoherenceInf}, \eqref{Eq:MainTheoAssumption} and \eqref{Eq:Mtildebound}. Moreover, $ C_{\mu}$ is independent of $k,l,\bfM,\bfN,\bfs$. Therefore,
	\begin{equation}
		C_{\mu} \frac{\log(\epsilon^{-1})}{m_k} \frac{N_{k}-N_{k-1}}{N_{k-1}} \left( \sum_{l=1}^r 2^{-|k-l|/2} s_l \right) \log(32 C_{\mu} NK^3s^{3/2}) \lesssim 1
	\end{equation}
	and Equation \eqref{Eq:TheoRef1} is fulfilled. Now, we consider Equation \eqref{Eq:TheoRef2} we directly estimate $\mu_{\bfN,\bfM}(k,l)$ for $k<r$ directly without the $2^{-1/2}$ term to keep the sum notation.
	\begin{align}
		& \sum_{k=1}^r \left( \frac{N_k - N_{k-1}}{\hat m _k} -1 \right) \mu_{\mathbf{N},\mathbf{M}}(k,l) \tilde s _k \\
		& \leq \sum_{k=1}^r \left( \frac{N_k - N_{k-1}}{\hat m _k} \right) C_{\mu} 2^{-(J_0+k-1)}  2^{-|k-l|/2} \tilde s _k \\
		& = C_{\mu} \frac{N_k - N_{k-1}}{N_{k-1}} \sum_{k=1}^r \frac {\tilde s _k} {\hat m _k} 2^{-|l-k|/2} \\
		& \leq C_{\mu} \sum_{k=1}^r \frac {\tilde s _k} {\hat m _k} 2^{-|l-k|/2}
	\end{align}
	 Due to the fact that the geometric series is bounded we have
	 \begin{equation}
	 	\sum_{k=1}^r 2^{-|l-k|/2} \leq C_{\text{geo}}, \quad \text{ for all }l=1,\ldots,r.
	 \end{equation}
	 We are left with bounding $\tilde{s_k}/\hat{m}_k$ for all $k=1,\ldots,r$. Denote the constant from $\lesssim$ in Theorem \ref{Theo:MainRef} by $C$. With the estimate in Corollary \ref{Cor:relativeSparsity} we can then bound $\tilde{s_k}$ with \eqref{Eq:MainTheoAssumption} by
	 \begin{align}\label{Eq:tildesk}
	 	\tilde{s_k} 
	 	& \leq S_k(\bfN,\bfM,\bfs) \leq C_\text{geo} C_{\mu} \cdot 2 \sum_{l=0}^{r-1} 2^{-|k-l|/2} s_l \\
	 	&\leq 2C_\text{geo} C_{\mu} C m_k \cdot \frac{N_{k-1}}{N_{k}-N_{k-1}} \left( \log(\epsilon^{-1}) \log (K^2 s N )\right)^{-1}  \\
	 	& \leq 2 C_\text{geo} C_{\mu} C  \frac{N_{k-1}}{N_{k}-N_{k-1}} \hat{m}_k = 2C_\text{geo} C_{\mu} C \hat{m}_k.
	 \end{align}
	 All together yields	 
	 \begin{align}
	  \sum_{k=1}^r \left( \frac{N_k - N_{k-1}}{\hat m _k} -1 \right) \mu_{\mathbf{N},\mathbf{M}}(k,l) \tilde s _k 
	 \leq 2 C_{\mu}^2 C_{\text{geo}}^2 C  \lesssim 1.
	 \end{align}
	 
\end{proof}

\subsection{Recovery guarantees for the Walsh-Haar case}\label{Ch:WalshHaar}

In this section we pay attention to the Walsh-Haar case. This relationship is of high interest because of the very similar behaviour of Walsh functions and Haar wavelets. As seen earlier this results in perfect block diagonality of the change of basis matrix, see Figure \ref{fig:RecMatrixHaar}. This block diagonal structure has been analysed in \cite{WalshHaar} with a focus on its impact on linear reconstruction methods. In contrast to general Daubechies wavelets it is possible to evaluate the inner product between the Walsh function and Haar wavelet and scaling function exactly.
\begin{lemma}[Lem. 1 \cite{WalshHaar}]\label{Th:HaarWavDec}
	Let $\psi = \mathcal{X}_{[0,1/2]} - \mathcal{X}_{(1/2,1]}$ be the Haar wavelet. Then, we have that 
	\begin{equation}
	|\langle \psi_{R,j}, \Wal(n,\cdot) \rangle| = \begin{cases}
	2^{-R/2} &\quad 2^R \leq n < 2^{R+1}, 0 \leq j \leq 2^R -1 \\
	0 &\quad  \text{otherwise.}
	\end{cases}
	\end{equation}
\end{lemma}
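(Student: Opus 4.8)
The plan is to strip the dilation $2^R$ and the translation $j$ off $\psi_{R,j}$ using the scaling and multiplicative properties of $\Wal$, reducing the computation to orthonormality of the Walsh system on $[0,1)$ together with the elementary identity $\psi = \Wal(1,\cdot)$ on $[0,1)$. First I would substitute $x = 2^{-R}(y+j)$ in $\langle \psi_{R,j},\Wal(n,\cdot)\rangle = \int_0^1 2^{R/2}\psi(2^Rx-j)\Wal(n,x)\,dx$. Since $\operatorname{supp}\psi_{R,j}=[j2^{-R},(j+1)2^{-R})$ and $0\le j\le 2^R-1$, this gives $\langle \psi_{R,j},\Wal(n,\cdot)\rangle = 2^{-R/2}\int_0^1 \psi(y)\,\Wal\bigl(n,2^{-R}(y+j)\bigr)\,dy$. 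For $y\in[0,1)$ the dyadic digits of $j2^{-R}$ and of $2^{-R}y$ lie in disjoint position ranges, so $2^{-R}(y+j)=(j2^{-R})\oplus(2^{-R}y)$, and the multiplicative identity yields
\[
\Wal\bigl(n,2^{-R}(y+j)\bigr)=\Wal(n,j2^{-R})\,\Wal(n,2^{-R}y),\qquad |\Wal(n,j2^{-R})|=1 .
\]

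Next I would identify $\Wal(n,2^{-R}y)$, viewed as a function of $y\in[0,1)$, with $\Wal(\lfloor n/2^R\rfloor,y)$. Writing $n=2^Rq+s$ with $q=\lfloor n/2^R\rfloor$ and $0\le s<2^R$, the dyadic digits of $2^Rq$ and $s$ do not overlap, so $\Wal(n,\cdot)=\Wal(2^Rq,\cdot)\,\Wal(s,\cdot)$; the scaling property turns $\Wal(2^Rq,2^{-R}y)$ into $\Wal(q,y)$, while $\Wal(s,\cdot)$, depending only on the first $R$ dyadic digits of its argument, is identically $1$ on $[0,2^{-R})$, which contains $2^{-R}y$. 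Substituting back and using $\psi=\Wal(1,\cdot)$ on $[0,1)$,
\[
\langle \psi_{R,j},\Wal(n,\cdot)\rangle = 2^{-R/2}\,\Wal(n,j2^{-R})\,\bigl\langle \Wal(1,\cdot),\Wal(q,\cdot)\bigr\rangle .
\]
By orthonormality of the Walsh functions on $[0,1)$ the right-hand inner product equals $1$ when $q=1$, i.e.\ exactly when $2^R\le n<2^{R+1}$, and $0$ when $q=0$ (that is $n<2^R$) or $q\ge 2$ (that is $n\ge 2^{R+1}$). Taking absolute values gives the stated formula.

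The main point requiring care is the bookkeeping of Walsh conventions: that $\Wal(1,\cdot)=\psi$ on $[0,1)$, that $\{\Wal(n,\cdot):n<2^R\}$ are precisely the functions constant on dyadic intervals of length $2^{-R}$, and that the sequency-index factorization $\Wal(a\oplus b,\cdot)=\Wal(a,\cdot)\Wal(b,\cdot)$ holds. The last is immediate in the Walsh-Paley ordering, and since the Walsh-Kaczmarz reordering used here merely permutes each dyadic block $\{2^R,\dots,2^{R+1}-1\}$ onto itself while the quantity in the statement depends (through its absolute value) only on which block $n$ lies in, one may carry out the whole computation in the Paley ordering without loss of generality. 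Alternatively, one can argue in one stroke: $\{\Wal(n,\cdot):2^R\le n<2^{R+1}\}$ is an orthonormal basis of the level-$R$ Haar detail space $\operatorname{span}\{\psi_{R,j}:0\le j<2^R\}$ and is orthogonal to all other Haar scales; since $|\Wal(n,\cdot)|\equiv 1$ whereas $\psi_{R,j}$ equals $\pm 2^{R/2}$ on one dyadic interval of length $2^{-R}$ and vanishes elsewhere, each nonzero coefficient $\langle \psi_{R,j},\Wal(n,\cdot)\rangle$ is forced to have modulus exactly $2^{-R/2}$, and it is nonzero precisely for $n$ in that block.
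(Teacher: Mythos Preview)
The paper does not give its own proof of this lemma; it is quoted verbatim from \cite{WalshHaar}. So there is no ``paper's approach'' to compare against, and your task is simply to supply a valid argument. Your main argument does this correctly: the change of variables together with the shift/multiplicative identity (Lemma~\ref{Cor:scalingProp}) reduces everything to $2^{-R/2}$ times a phase factor times $\int_0^1 \psi(y)\,\Wal(\lfloor n/2^R\rfloor,y)\,dy$, and then $\psi=\Wal(1,\cdot)$ plus orthonormality finishes it. Your observation that the Paley--Kaczmarz reordering only permutes each dyadic block $\{2^R,\dots,2^{R+1}-1\}$ internally, while the claimed value depends only on which block $n$ lies in, is exactly the right way to sidestep the fact that the first-argument factorisation $\Wal(a\oplus b,\cdot)=\Wal(a,\cdot)\Wal(b,\cdot)$ is a Paley-ordering identity. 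This is a clean and complete proof.

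One small comment on your \emph{alternative} one-stroke argument at the end. As written it does not quite close: from $|\Wal(n,\cdot)|\equiv 1$ and $|\psi_{R,j}|=2^{R/2}\mathbbm{1}_{[j2^{-R},(j+1)2^{-R})}$ you only get $|\langle\psi_{R,j},\Wal(n,\cdot)\rangle|\le 2^{-R/2}$, not equality, and you have not yet argued that the coefficient is nonzero for \emph{every} $n$ in the block. The missing step is short: since $\Wal(n,\cdot)\in W_R^b=V_{R+1}^b\cap (V_R^b)^\perp$ for $2^R\le n<2^{R+1}$, its integral over every dyadic interval of length $2^{-R}$ vanishes; being constant $\pm 1$ on the two halves of $[j2^{-R},(j+1)2^{-R})$, it must therefore take value $+1$ on one half and $-1$ on the other, which forces $\langle\psi_{R,j},\Wal(n,\cdot)\rangle=\pm 2^{-R/2}$ exactly. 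With that line added, the alternative argument is also complete (and arguably the more robust of the two, since it never touches ordering conventions).
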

For the scaling function we have
\begin{lemma}[Lem. 2 \cite{WalshHaar}]\label{Lem:HaarScalDec}
	Let $\phi = \mathcal{X}_{[0,1]}$ be the Haar scaling function. Then, we have that the Walsh transform obeys the following block and decay structure
	\begin{equation}\label{Eq:scalingDecay}
	|\langle \phi_{R,j}, \Wal(n,\cdot) \rangle| = \begin{cases}
	2^{-R/2} \quad & n < 2^R, 0 \leq j \leq 2^R -1 \\
	0 \quad &\text{otherwise.}
	\end{cases} 
	\end{equation}
\end{lemma}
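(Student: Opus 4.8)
The plan is to reduce the computation to the classical structural fact that the first $2^R$ Walsh functions span exactly the space of functions that are constant on the dyadic intervals of level $R$. Since $\phi=\mathcal X_{[0,1]}$, we have $\phi_{R,j}(x)=2^{R/2}\phi(2^Rx-j)=2^{R/2}\mathcal X_{I_{R,j}}(x)$ with $I_{R,j}=[\,j2^{-R},(j+1)2^{-R})$, so that
\[
\langle \phi_{R,j},\Wal(n,\cdot)\rangle \;=\; 2^{R/2}\int_{I_{R,j}}\Wal(n,x)\,dx .
\]
If $j\notin\{0,\dots,2^R-1\}$, then $I_{R,j}$ is disjoint from $[0,1)$ and the inner product is $0$; hence it remains to treat $0\le j\le 2^R-1$ and to separate the cases $n<2^R$ and $n\ge 2^R$.

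For $n<2^R$ I would use that $\Wal(n,\cdot)$ takes values in $\{-1,1\}$ and is constant on every level-$R$ dyadic interval --- in the Walsh--Kaczmarz ordering this is the standard Walsh--Hadamard structure, and it can also be read off the digit definition of \S\ref{Ch:SamplingSpace}, since for $n<2^R$ the exponent defining $\Wal(n,x)$ only involves the leading $R$ dyadic digits of $x$. Thus $\Wal(n,\cdot)\equiv\varepsilon\in\{-1,1\}$ on $I_{R,j}$ and
\[
\langle \phi_{R,j},\Wal(n,\cdot)\rangle \;=\; 2^{R/2}\,\varepsilon\,|I_{R,j}| \;=\; \varepsilon\,2^{-R/2},
\]
which has modulus $2^{-R/2}$, as claimed. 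For $n\ge 2^R$ I would instead use that $\{\Wal(m,\cdot):0\le m<2^R\}$ is an orthonormal basis of the $2^R$-dimensional space of functions constant on level-$R$ dyadic intervals; since $\phi_{R,j}=2^{R/2}\mathcal X_{I_{R,j}}$ belongs to this space while $\Wal(n,\cdot)$ is orthogonal to each of $\Wal(0,\cdot),\dots,\Wal(2^R-1,\cdot)$, orthogonality forces $\langle\phi_{R,j},\Wal(n,\cdot)\rangle=0$. Equivalently, in the spirit of the proof of Lemma \ref{Lem:EstimateSSR}, the substitution $x=2^{-R}(t+j)$ and the multiplicative identity $\Wal(n,2^{-R}t\oplus 2^{-R}j)=\Wal(n,2^{-R}t)\Wal(n,2^{-R}j)$ --- valid since $t\in[0,1)$ and $j2^{-R}$ have disjoint dyadic supports --- turn the integral into $2^{-R/2}\Wal(n,2^{-R}j)\int_0^1\Wal(n,2^{-R}t)\,dt$, and the remaining integral is $1$ for $n<2^R$ and $0$ for $n\ge 2^R$.

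The only genuinely non-routine ingredient is the structural identity that the first $2^R$ Walsh functions span precisely the piecewise-constant functions on the level-$R$ dyadic grid, equivalently that $\Wal(n,\cdot)$ is constant on level-$R$ dyadic intervals exactly when $n<2^R$; everything else is bookkeeping. This is classical --- it amounts to orthogonality of the size-$2^R$ Hadamard matrix --- and may be quoted from \cite{deutschWalsh}, or deduced from the Haar--Walsh relation already underlying Lemma \ref{Th:HaarWavDec}. I therefore expect no real difficulty beyond being careful with the index ranges and with the half-open dyadic intervals.
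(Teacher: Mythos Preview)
The paper does not actually supply a proof of this lemma: it is quoted verbatim as Lemma~2 from \cite{WalshHaar}, so there is no in-paper argument to compare your proposal against.

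That said, your argument is correct and entirely standard. The structural fact you single out --- that $\{\Wal(m,\cdot):0\le m<2^R\}$ is an orthonormal basis for the functions that are constant on the level-$R$ dyadic intervals --- is exactly the right ingredient, and the two routes you sketch (direct evaluation using constancy for $n<2^R$ and orthogonality for $n\ge 2^R$, versus the substitution using Lemma~\ref{Cor:scalingProp} and the multiplicative identity) both work and mirror the manipulations already used elsewhere in the paper, e.g.\ in the proof of Lemma~\ref{Lem:EstimateSSR}. One small point of care in the substitution route: to conclude $\int_0^1\Wal(n,2^{-R}t)\,dt=0$ for $n\ge 2^R$ you should appeal to the scaling property $\Wal(n,2^{-R}t)=\Wal(2^{-R}n,t)$ and then use that this is a Walsh function of integer parameter $\lfloor 2^{-R}n\rfloor\ge 1$ on each unit interval of $t$ (or simply fall back on the orthogonality argument you already gave, which is cleaner).
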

The order does not change from the one for general Daubechies wavelets and each level $j$ contains $2^j$ many elements. Due to the structure of the change of basis matrix $U$ in the Walsh-Haar case, the off diagonal blocks do not impact the coherence and sparsity structure at one level. Therefore, the number of samples per level only depends on the incoherence in this given level and the relative sparsity within. With this the main theorem simplifies for the Walsh-Haar case to the next Corollary. 
\begin{corollary}
	Let the notation be as before, but let the wavelet be the Haar wavelet. Moreover, let $\epsilon >0$ and $\Omega = \Omega_{N,m}$ be a multilevel sampling scheme such that: 
	\begin{enumerate}
		\item The number of samples is larger or equal the number of reconstructed coefficients, i.e. $N \geq M$.
		\item Let $K = \max_{k=1, \ldots, r} \left\{ \frac{N_k - N_{k-1}}{m_k} \right\}$, $M=M_r$, $N=N_r$ and $s= s_1 + \ldots + s_r$ and for each $k=1,\ldots,r$:
		\begin{equation}
			m_k \gtrsim \log(\epsilon^{-1})\log(K \sqrt{s} N) \cdot s_k.
		\end{equation}
	\end{enumerate}
	Then, with probability exceeding $1-s\epsilon$, any minimizer $\xi \in \ell^1(\N)$ satisfies
	\begin{equation}
	||\xi - x ||_2 \leq c \cdot \left( \delta \sqrt{K}(1+L \sqrt{s}) + \sigma_{s,M}(f) \right),
	\end{equation}
	for some constant $c$, where $L = c \cdot \left( 1 + \frac{\sqrt{\log(6 \epsilon^{-1})}}{\log(4KM \sqrt{s})} \right)$. If $m_k = N_{k} - N_{k-1}$ for $1 \leq k \leq r$ then this holds with probability $1$.
\end{corollary}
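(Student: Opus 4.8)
The plan is to verify, one by one, the hypotheses of Theorem~\ref{Theo:MainRef}, exactly as in \S\ref{Ch:Mainproof}, but replacing every estimate on the change of basis matrix by the \emph{exact} descriptions in Lemma~\ref{Th:HaarWavDec} and Lemma~\ref{Lem:HaarScalDec}. These two lemmas say that a column of $U$ indexed by a scaling function $\phi_{J_0,j}$ or a wavelet $\psi^b_{l,m}$ is supported, in the row variable, precisely on the sampling block to which it belongs: $\phi_{J_0,j}$ and $\psi^b_{J_0,m}$ pair non-trivially with $\Wal(n,\cdot)$ only for $n<2^{J_0+1}$, and $\psi^b_{l,m}$ only for $2^{l}\le n<2^{l+1}$. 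With the ordering of \S\ref{Ch:Ordering} and the level definitions \eqref{Eq:M}, \eqref{Eq:N}, this means that $U$ is block diagonal with respect to $(\mathbf N,\mathbf M)$: $\PNk U \PMl = 0$ whenever $k\neq l$, and every entry of the $k$-th diagonal block has modulus $2^{-(J_0+k-1)/2}$ on its support. I would first record the three consequences that feed Theorem~\ref{Theo:MainRef}. (i) The local coherences collapse: $\mu_{\mathbf N,\mathbf M}(k,l)=0$ for $k\neq l$ and $\mu_{\mathbf N,\mathbf M}(k,k)=2^{-(J_0+k-1)}$ (and $\mu_{\mathbf N,\mathbf M}(k,\infty)=0$ for $k<r$, $\le 2^{-(J_0+r-1)}$ for $k=r$); in effect every factor $2^{-|k-l|/2}$ appearing in \S\ref{Ch:Mainproof} is replaced by the Kronecker delta $\delta_{k,l}$. (ii) Since $\PNk U\eta=\PNk U\PMk\eta$ and the diagonal block is a contraction, the relative sparsity of Definition~\ref{Def:relativeSparsity} obeys $S_k(\mathbf N,\mathbf M,\mathbf s)\le s_k$. (iii) $P_N^\perp UP_M=0$ as soon as $N\ge M$, because every column of $P_M$ sits in one of the first $r$ row-blocks, all contained in $\{1,\dots,M_r\}\subseteq\{1,\dots,N\}$.

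Next I would dispatch the strong balancing property (hypothesis~\eqref{Item1:TheoMainRef}). By consequence~(iii), under assumption (1) of the Corollary one has $P_MU^*P_N^\perp UP_M=0$ and $P_M^\perp U^*P_N^\perp UP_M=0$, so both inequalities defining the strong balancing property hold with left-hand side $0$; in particular no $M^2$ relation is needed, which is the first place the Haar structure improves on Theorem~\ref{Theo:Main}. In the same breath I would bound $\tilde M$: from Lemma~\ref{Th:HaarWavDec}, $\|P_NUe_m\|_2=0$ once the scale $l$ of the wavelet $\varphi_m=\psi^b_{l,\cdot}$ satisfies $2^{l}>N$, i.e. once $m>N$; hence $\tilde M\le N+1$, so $\log(K\tilde M\sqrt s)\lesssim\log(K\sqrt s\,N)$, matching the logarithm in the hypothesis on $m_k$ and improving on the bound \eqref{Eq:Mtildebound} used in the general case.

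It then remains to check \eqref{Eq:TheoRef1} and \eqref{Eq:TheoRef2}, which now collapse to the single-level conditions. For \eqref{Eq:TheoRef1}, consequence~(i) gives $\sum_{l}\mu_{\mathbf N,\mathbf M}(k,l)s_l=2^{-(J_0+k-1)}s_k$ (with the $\mu_{\mathbf N,\mathbf M}(k,\infty)$ replacement understood), while by \eqref{Eq:M}, \eqref{Eq:N} one has $N_k-N_{k-1}\asymp N_{k-1}=2^{J_0+k-1}$, so $(N_k-N_{k-1})\,2^{-(J_0+k-1)}=O(1)$ and \eqref{Eq:TheoRef1} reduces to $m_k\gtrsim\log(\epsilon^{-1})\log(K\sqrt s\,N)\,s_k$, i.e. assumption (2) of the Corollary. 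For \eqref{Eq:TheoRef2}, consequence~(i) kills every term with $k\neq l$, so for each $l$ the requirement is $1\gtrsim(\tfrac{N_l-N_{l-1}}{\hat m_l}-1)\,2^{-(J_0+l-1)}\tilde s_l$ with $\tilde s_l\le S_l\le s_l$ by~(ii); this holds for $\hat m_l\asymp s_l$, whence $m_l\gtrsim\hat m_l\log(\epsilon^{-1})\log(K\tilde M\sqrt s)\asymp s_l\log(\epsilon^{-1})\log(K\sqrt s\,N)$, again assumption (2). Feeding this into Theorem~\ref{Theo:MainRef} yields the stated error bound with the stated probability, and the probability-one case $m_k=N_k-N_{k-1}$ is inherited verbatim. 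The only genuinely delicate point is the bookkeeping around the boundary-corrected scaling functions at level $J_0$ and the oversampling parameter $q$ in \eqref{Eq:N} when checking that the block partition of $U$ is \emph{exactly} level-aligned; once that is secured, the remainder is a simplification of \S\ref{Ch:Mainproof}, and I expect no obstacle beyond this.
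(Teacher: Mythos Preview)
Your proposal is correct and follows essentially the same route as the paper: verify the strong balancing property via $P_N^\perp U P_M=0$, bound $\tilde M$ by a constant times $N$, and then check \eqref{Eq:TheoRef1} and \eqref{Eq:TheoRef2} using that the off-diagonal local coherences vanish. The only notable differences are that you bound $\tilde M\le N+1$ where the paper obtains $\tilde M=2N$ (immaterial inside the logarithm), and that for \eqref{Eq:TheoRef2} you use the direct isometry bound $S_k\le s_k$ from block diagonality, whereas the paper instead invokes the general relative-sparsity estimate \eqref{Eq:tildesk}; your route here is slightly cleaner but equivalent.
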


\begin{proof}
	The proof is separated in two parts. We first evaluate the analysis tools for the Walsh-Haar case. Second, we conclude that under our assumptions the requirements of theorem \ref{Theo:MainRef} are fulfilled, i.e. the balancing propery, equation \eqref{Eq:TheoRef1} and \eqref{Eq:TheoRef2}.
	
	For the analysis of the balancing property, let $M=2^R$ and $\varphi \in \RM$ be represented as $\varphi = \sum_{j =0}^{2^R-1} \alpha_j \phi_{R,j}$ with $\sum_{j =0}^{2^R-1} |\alpha_j|^2 = 1$. Then we have for $N \geq M$ that
	\begin{equation}
		||P_N^\perp U P_M||_2^2 =  \max_{\varphi \in \RM} \sum_{k>N} |\langle \Wal(k,\cdot), \varphi \rangle|^2 =  \sum_{k>N} |\langle \Wal(k,\cdot), \sum_{j =0}^{2^R-1} \alpha_j \phi_{R,j} \rangle|^2 = 0
	\end{equation}
	because $k > N \geq M = 2^R$ such that Lemma \ref{Lem:HaarScalDec} applies. Hence, the balancing property is satisfied for any $K$ and with it assumption \eqref{Item1:TheoMainRef} in \ref{Theo:MainRef} is fulfilled. 
	
	Next, we analyse $\tilde M$. 
	\begin{equation}
	\tilde{M} = \min \left\{ i \in \N: \max_{m \geq i} ||P_N U e_m ||_2 \leq \frac 1 {32K\sqrt{s}} \right\}
	\end{equation}
	we have for $m = 2^R + j$ with $j \leq 2^R-1$ that
	\begin{equation}
		||P_N U e_m ||_2^2 = \sum_{k \leq N} | \langle \Wal(k,\cdot), \phi_{R,j} \rangle|^2 = \begin{cases}
			N2^{R/2} \quad & 2^R < N \\
			0 	\quad &	2^R > N.
		\end{cases}
	\end{equation}
	We obtain that the minimal $i$ is achieved for $m = 2^{R+1}$ and therefore $\tilde{M} =2N$. Similar to the above discussion we get that $\mu_{\bfN,\bfM} (k,l) = 0$ for $k \neq l$. This removes the sum in \eqref{Eq:TheoRef1} in theorem \ref{Theo:MainRef}.
	\begin{align}
	& \frac {N_{k}-N_{k-1}}{m_k} \log (\epsilon^{-1}) \left( \sum_{l=1}^r \mu_{\mathbf{N},\mathbf{M}}(k,l) s_l \right) \log (K \tilde M \sqrt s) \\
	& \leq \frac {N_{k}-N_{k-1}}{m_k} \log (\epsilon^{-1}) \left( 2^{-(J_0 + k-1)} s_k \right) \log(2K\sqrt{s}N) \\
	& = \frac {N_k - N_{k-1}}{N_{k-1}} \frac{\log(\epsilon^{-1})}{m_k} s_k \log(2K\sqrt{s}N) \\
	& = \frac{\log(\epsilon^{-1})}{m_k} s_k \log(2K\sqrt{s}N) \lesssim 1.
	\end{align}
	For the second equation \eqref{Eq:TheoRef2} we get with the general estimate of the relative sparsity in Equation \eqref{Eq:tildesk}
	\begin{align}
		& \sum_{k=1}^r \left( \frac{N_k - N_{k-1}}{\hat m _k} -1 \right) \mu_{\mathbf{N},\mathbf{M}}(k,l) \tilde s _k \\
		& \leq \left( \frac{N_l - N_{l-1}}{\hat m_l} \right) 2^{-(J_0 +l-1)} \tilde s _l \\
		& \leq \frac{N_l - N_{l-1}}{N_{l-1}}\frac {\tilde s _l} {\hat m _l} \leq C \lesssim 1.
	\end{align}
	With this we have seen that all assumptions of theorem \ref{Theo:MainRef} are fulfilled and the recovery is guaranteed.
\end{proof}

\section{Numerical experiments}\label{Ch:Numerics}

\begin{figure}
	\subfloat[Continuous original function $f$ from Equation \eqref{Eq:OrigCont}]{
	\includegraphics[width=0.48\textwidth]{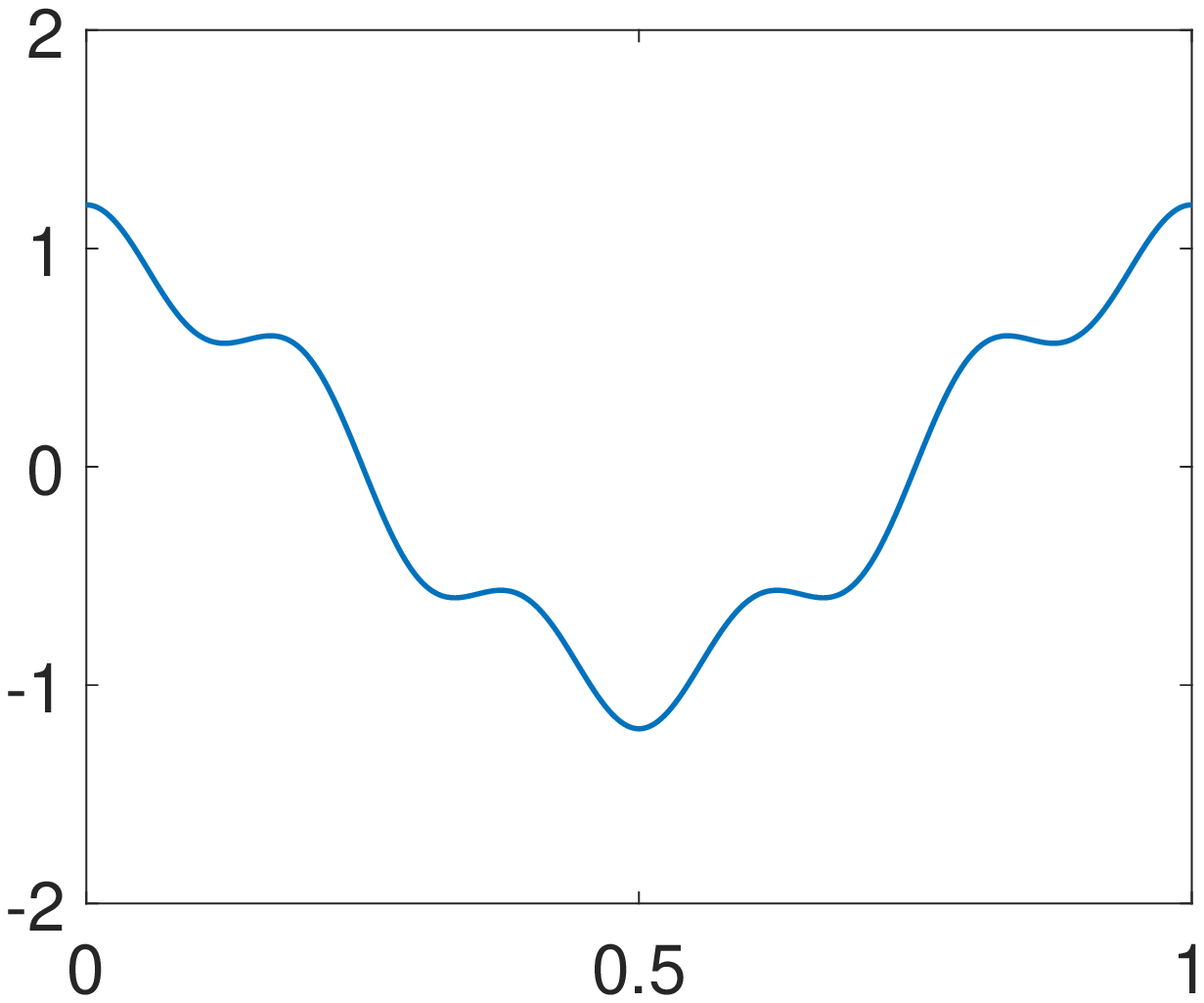}\label{fig:OrigCont}}
	\subfloat[Discontinuous original function $g$ from Equation \eqref{Eq:OrigDisc}]{
		\includegraphics[width=0.48\textwidth]{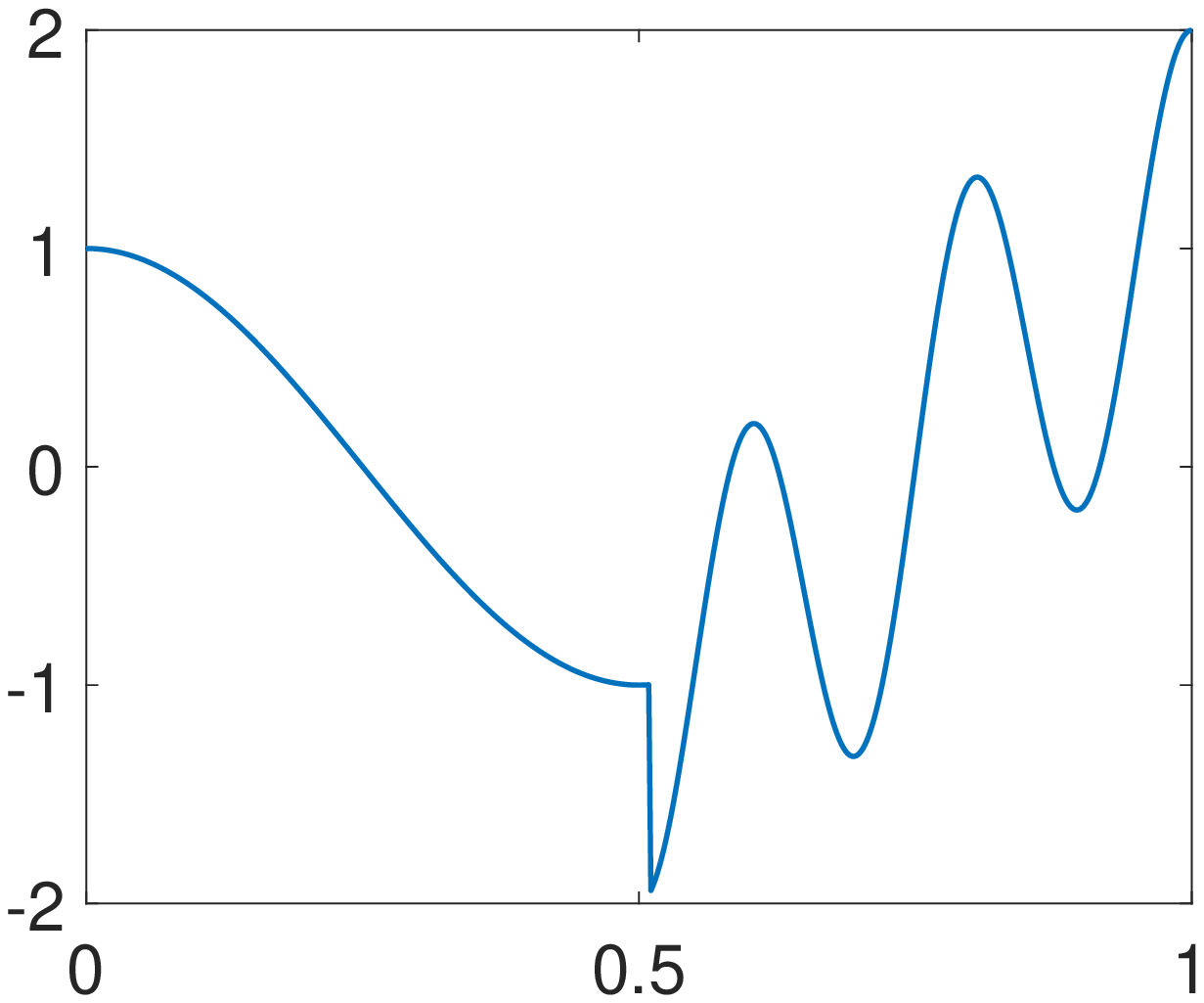}\label{fig:OrigDisc}}
	\caption{Original functions}
	\label{fig:Orig}
\end{figure}

\begin{figure}
	\subfloat[Sampling pattern with $N=2^8$ and $|m|=256$]{
		\includegraphics[width=0.48\textwidth]{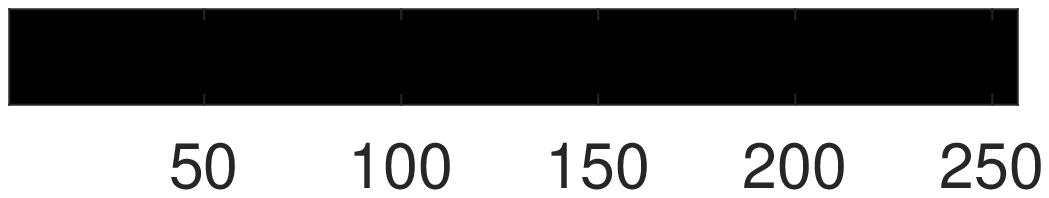}
		\label{fig:samplingN8}}
	\subfloat[Sampling pattern with $N=2^9$ and $|m|=256$]{
		\includegraphics[width=0.48\textwidth]{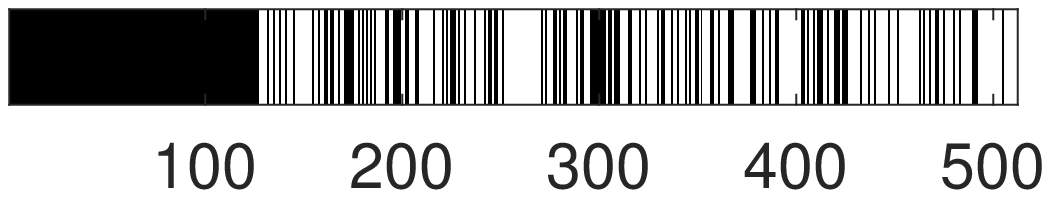}
		\label{fig:samplingN9}}
	\quad
		\subfloat[Sampling pattern with $N=2^{10}$ and $|m|=256$]{
		\includegraphics[width=0.48\textwidth]{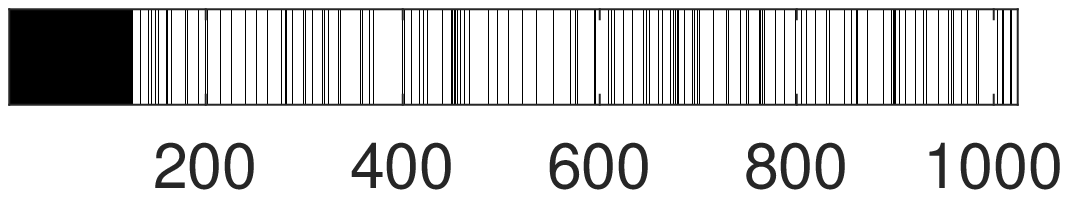}
		\label{fig:samplingN10}}
	\subfloat[Sampling pattern with $N=2^6$ and $|m|=64$]{
		\includegraphics[width=0.48\textwidth]{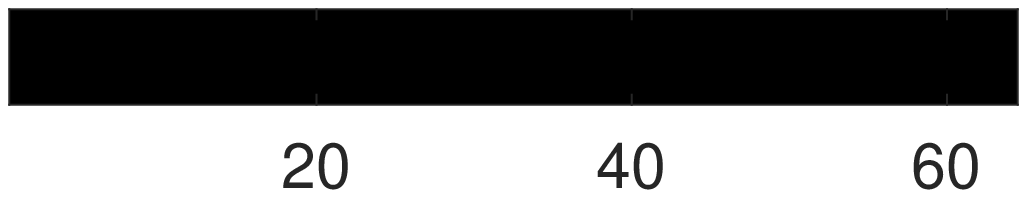}
		\label{fig:samplingN6Cont}}
	\quad
	\subfloat[Sampling pattern with $N=2^7$ and $|m|=64$]{
		\includegraphics[width=0.48\textwidth]{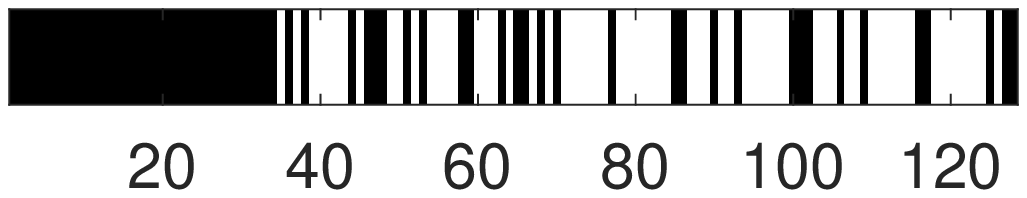}
		\label{fig:samplingN7Cont}}
	\subfloat[Sampling pattern with $N=2^8$ and $|m|=64$]{
		\includegraphics[width=0.48\textwidth]{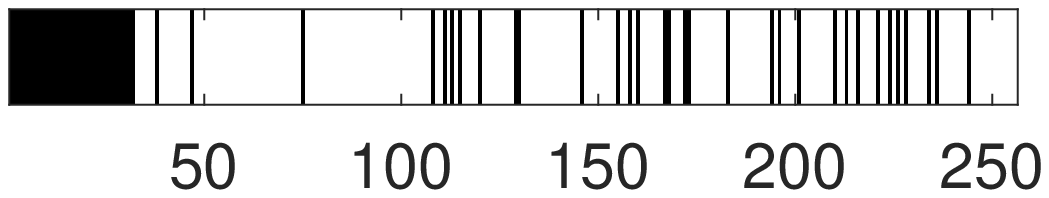}
		\label{fig:samplingN8Cont}}
	\caption{Sampling pattern used in the experiments in Figure \ref{fig:RecDiscont} and \ref{fig:NMTWCScont}, the samples are taken in the black area.}
	\label{fig:samplingPattern}
\end{figure}

\begin{figure}
	\centering
	\subfloat[CS reconstruction with $N = 2^8$]{
		\includegraphics[width=0.48\textwidth]{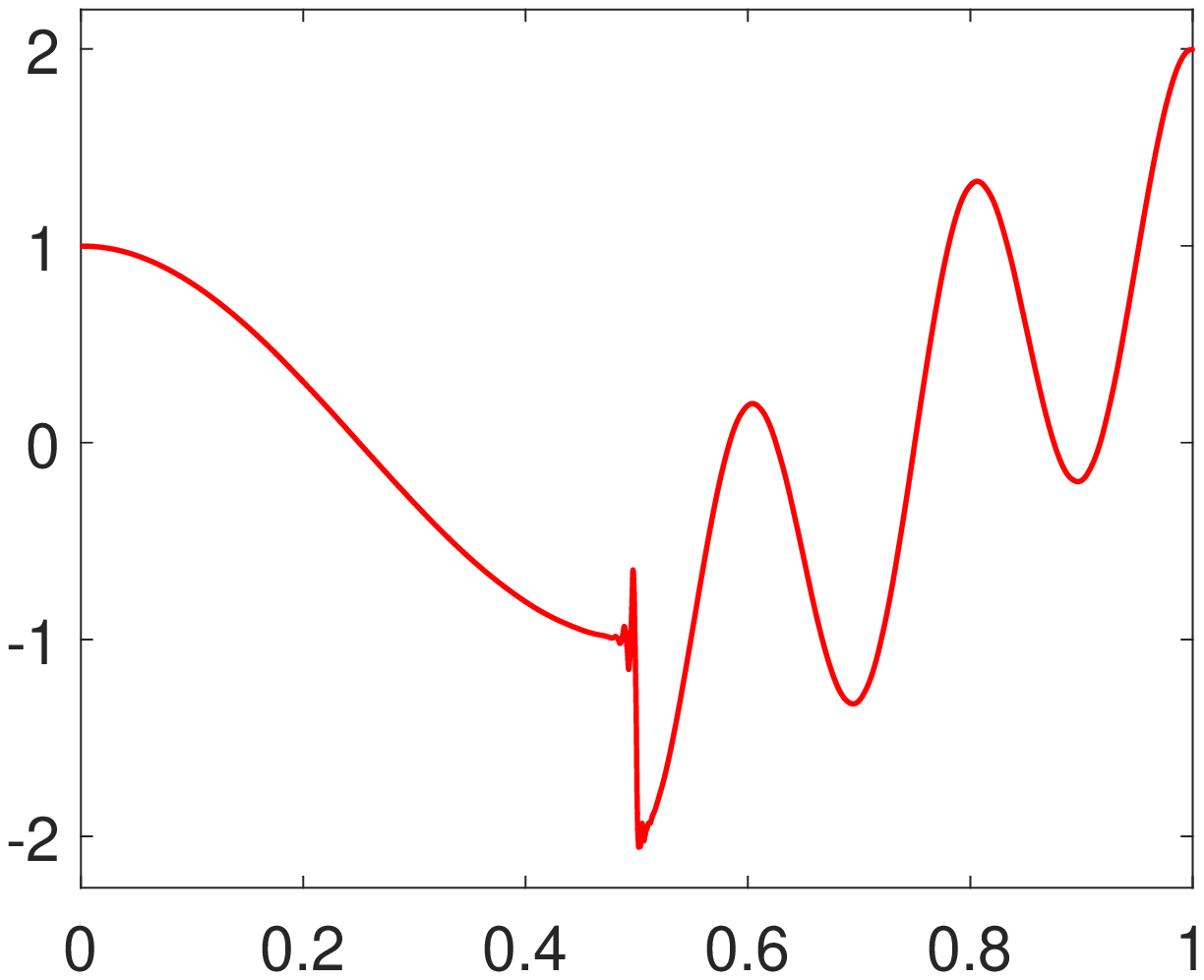}
		\label{fig:CSDisc8}}
	\subfloat[Reconstructed wavelet coefficients for $N=2^8$]{
		\includegraphics[width=0.48\textwidth]{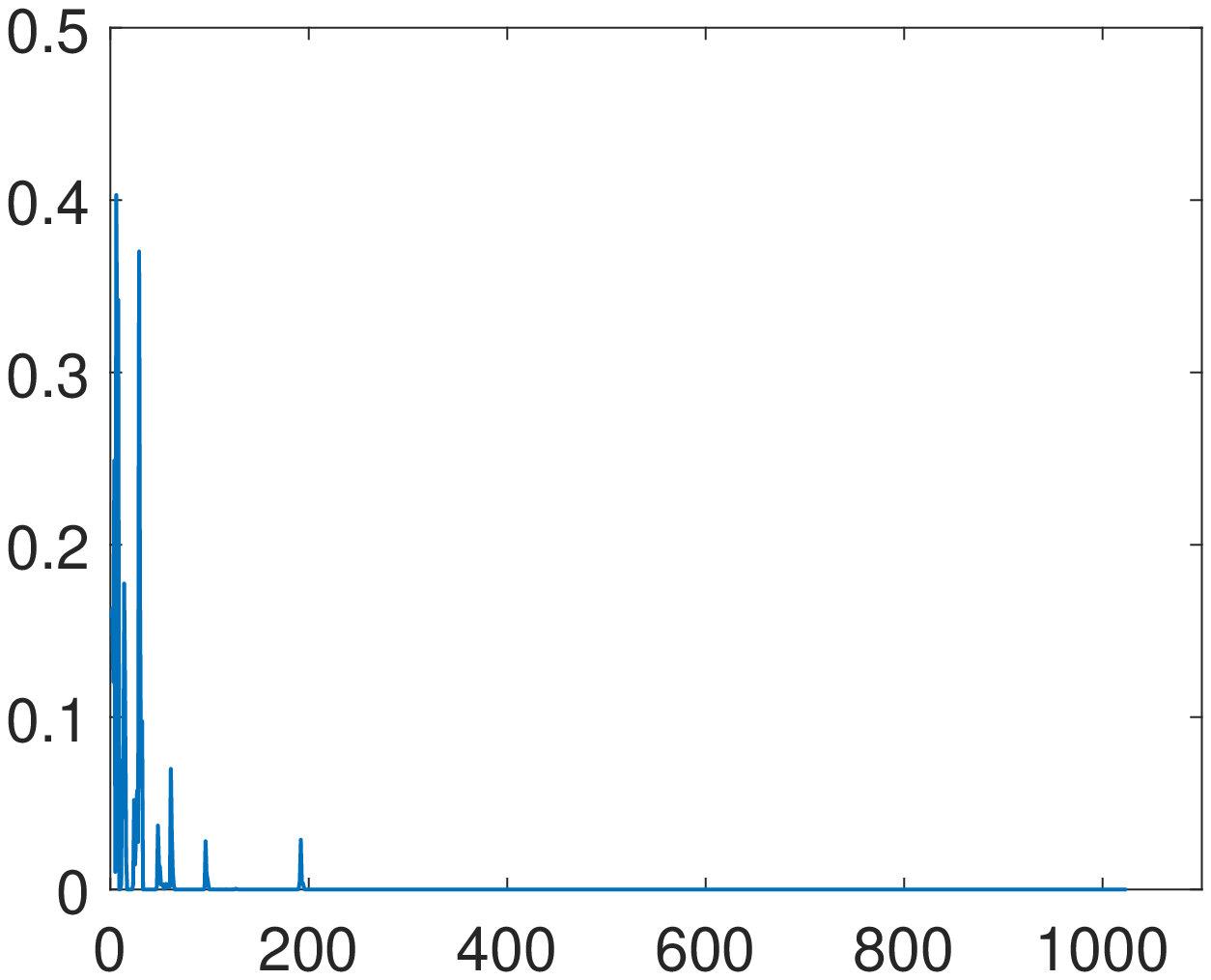}
		\label{fig:RecWaveCoeff8}}
	\quad
	\subfloat[CS reconstruction with $N = 2^9$]{
		\includegraphics[width=0.48\textwidth]{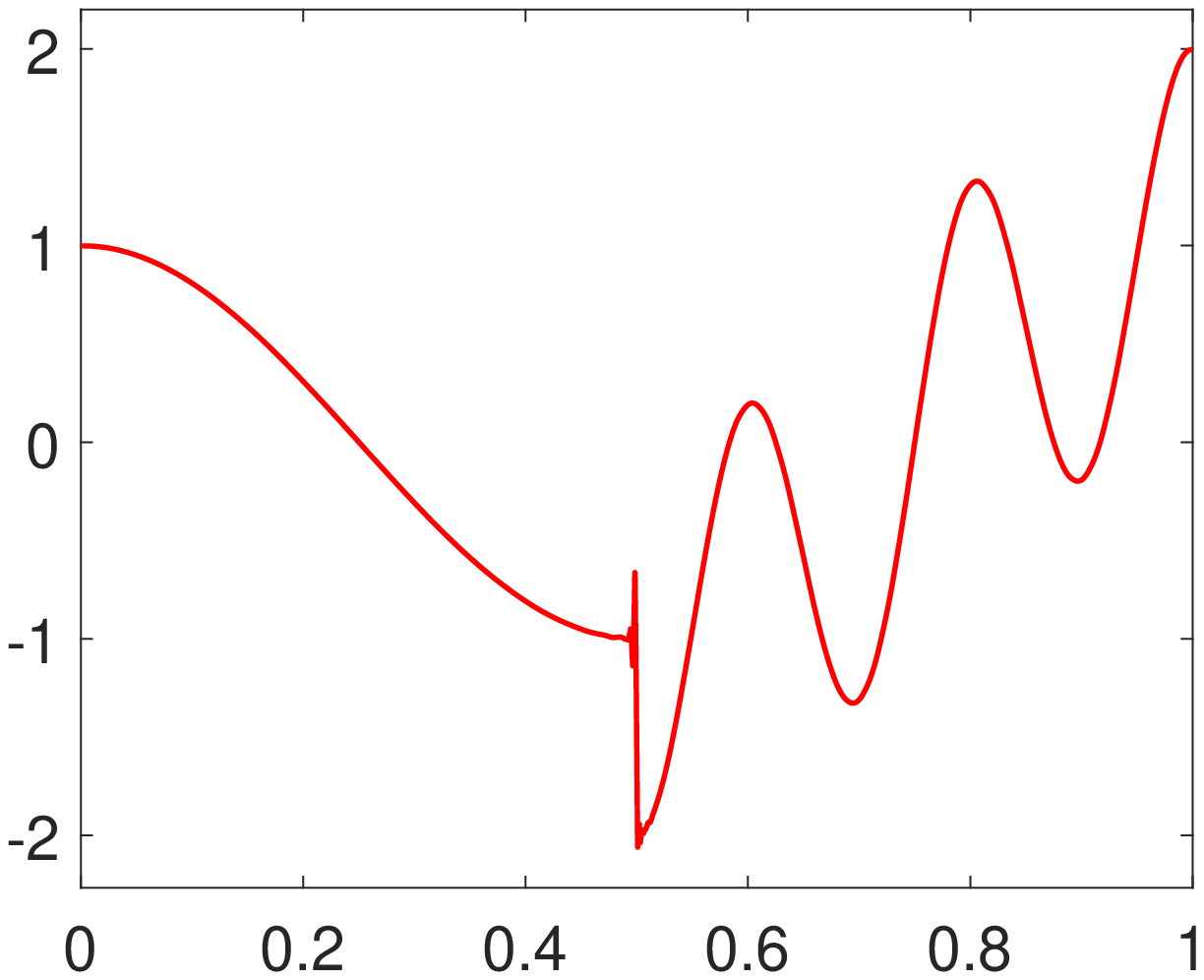}
		\label{fig:CSDisc9}}
	\subfloat[Reconstructed wavelet coefficients for $N=2^9$]{
		\includegraphics[width=0.48\textwidth]{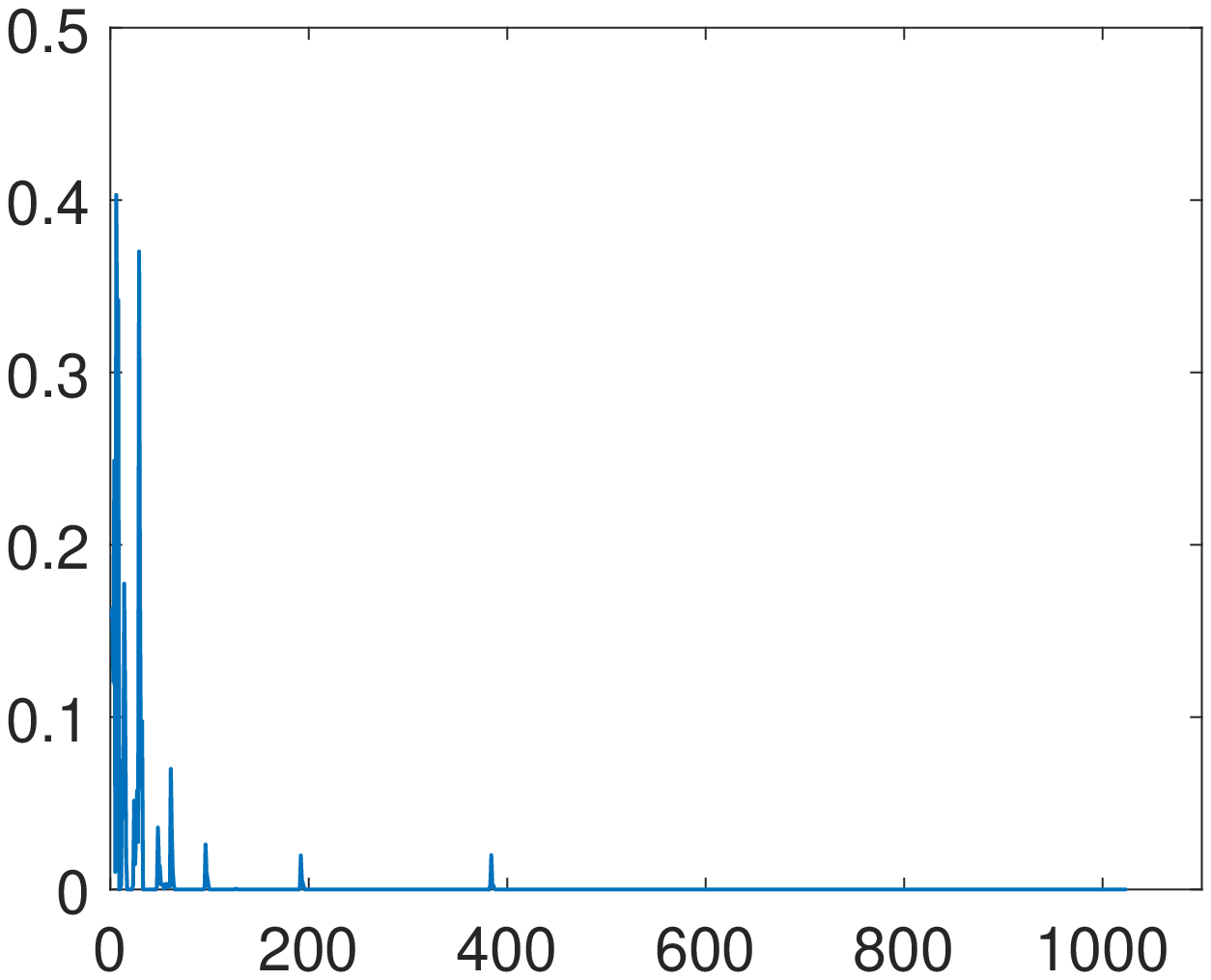}
		\label{fig:RecWaveCoeff9}}
	\quad
	\subfloat[CS reconstruction with $N = 2^{10}$]{
		\includegraphics[width=0.48\textwidth]{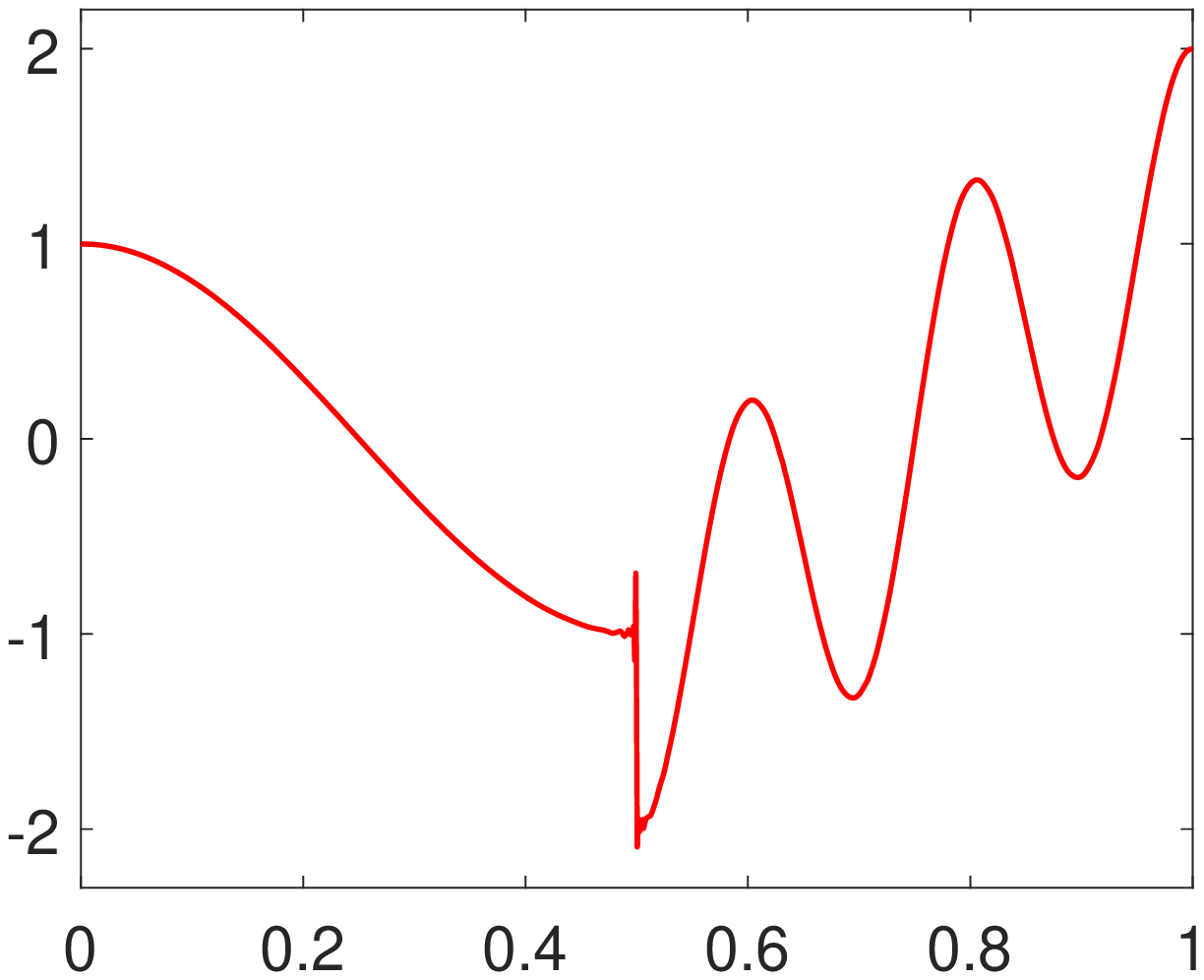}
		\label{fig:CSDisc10}}
	\subfloat[Reconstructed wavelet coefficients for $N=2^{10}$]{
		\includegraphics[width=0.48\textwidth]{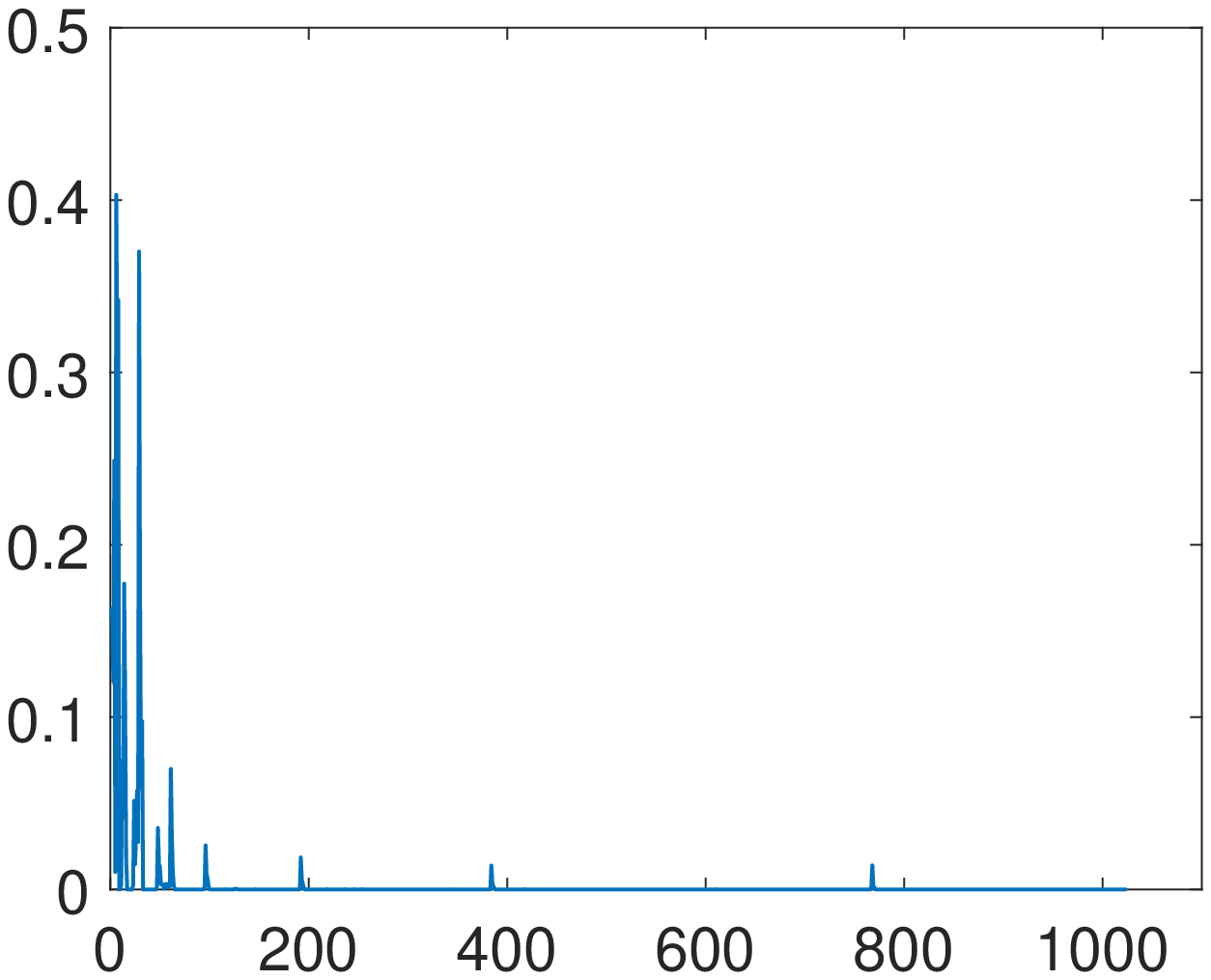}
		\label{fig:RecWaveCoeff10}}
	\caption{Reconstruction of the signal $g$ with $|m| = 256$ and varying choices of $N$ to visualize the impact of $N$ on the maximal wavelet coefficients.}	
	\label{fig:RecDiscont}
\end{figure}

\clearpage

\begin{figure}
	\subfloat[CS reconstruction of $f$ with $N = 2^{6}$ and $|m| = 64$]{
		\includegraphics[width=0.48\textwidth]{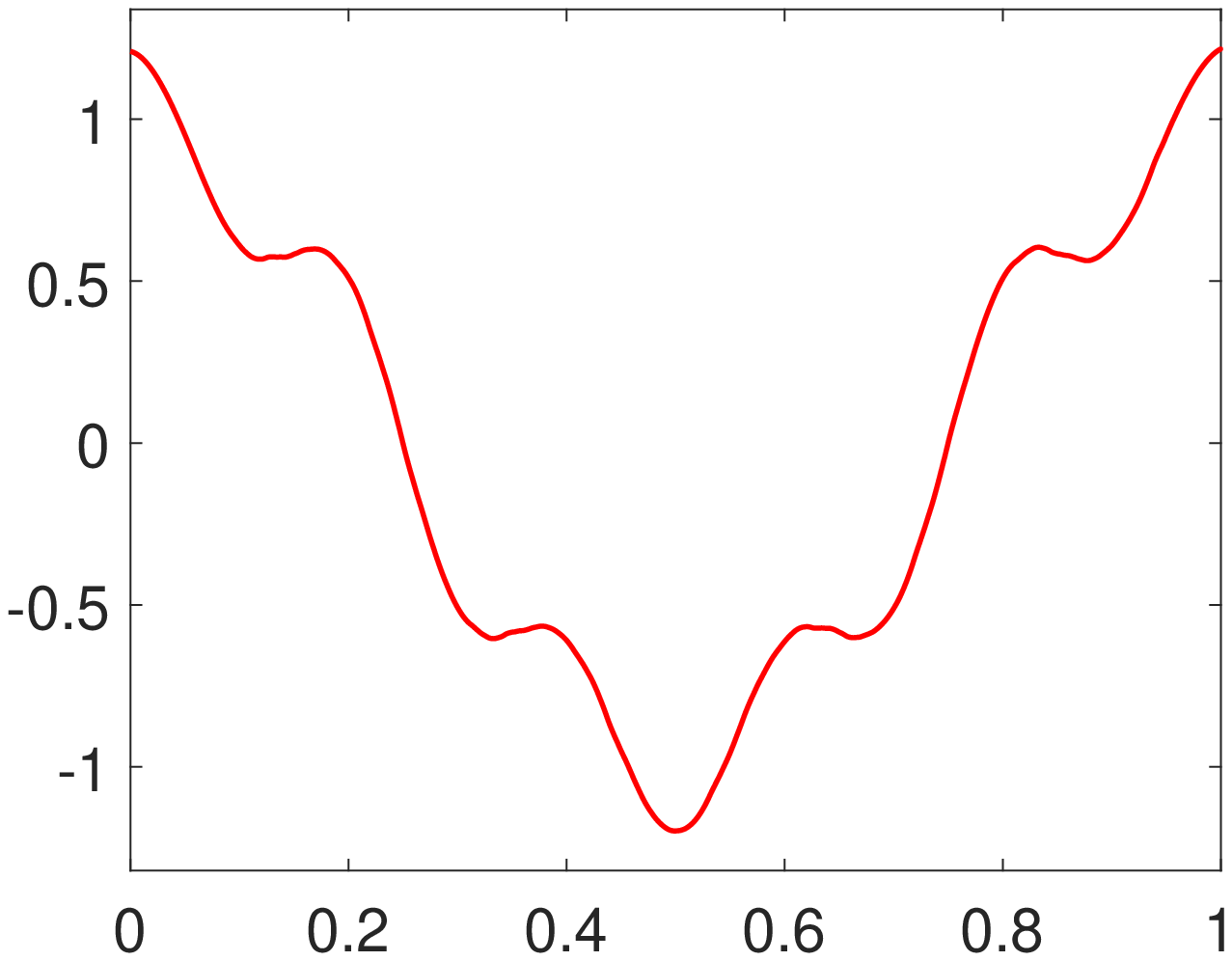}
		\label{fig:CSCont8}}
	\subfloat[Reconstructed wavelet coefficients of $f$ for $N=2^{6}$ and $|m|=64$]{
		\includegraphics[width=0.48\textwidth]{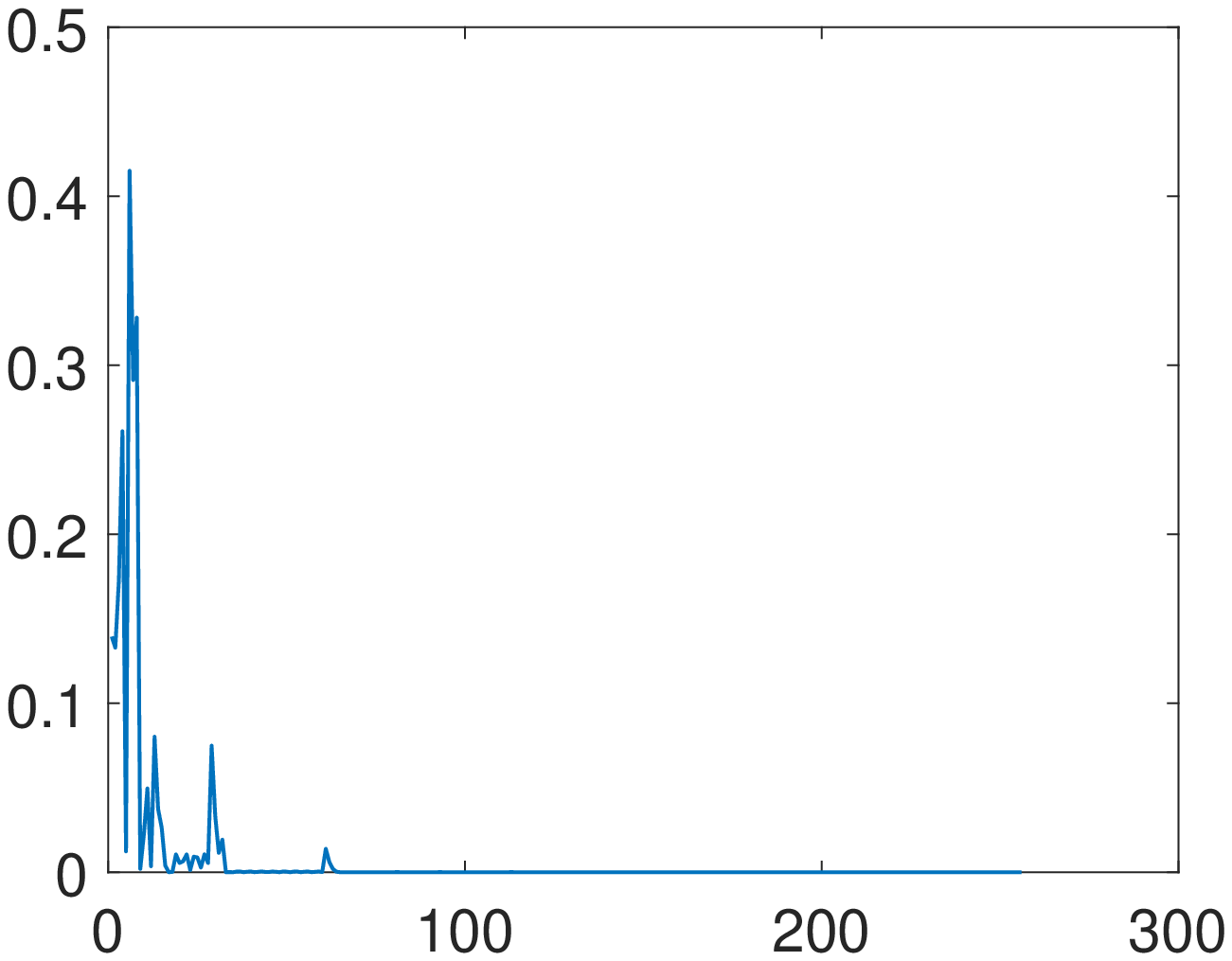}
		\label{fig:RecWaveCoeffCont}}
	\quad
	\subfloat[TW reconstruction of $f$ from $|m|=64$ samples]{
		\includegraphics[width=0.48\textwidth]{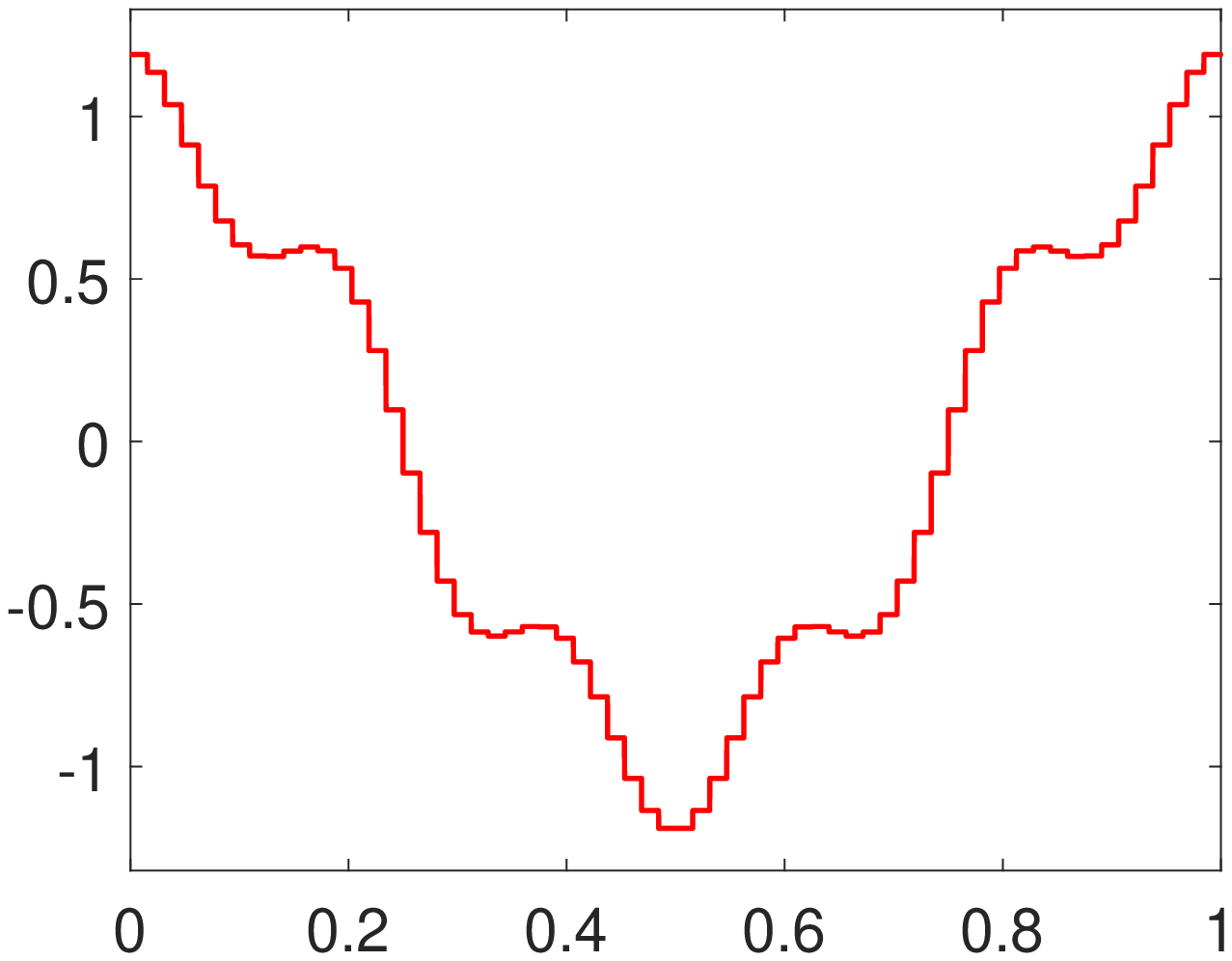}\label{fig:TWCont}}
	\subfloat[TW reconstruction of $g$ from $|m| = 256$ samples]{
		\includegraphics[width=0.48\textwidth]{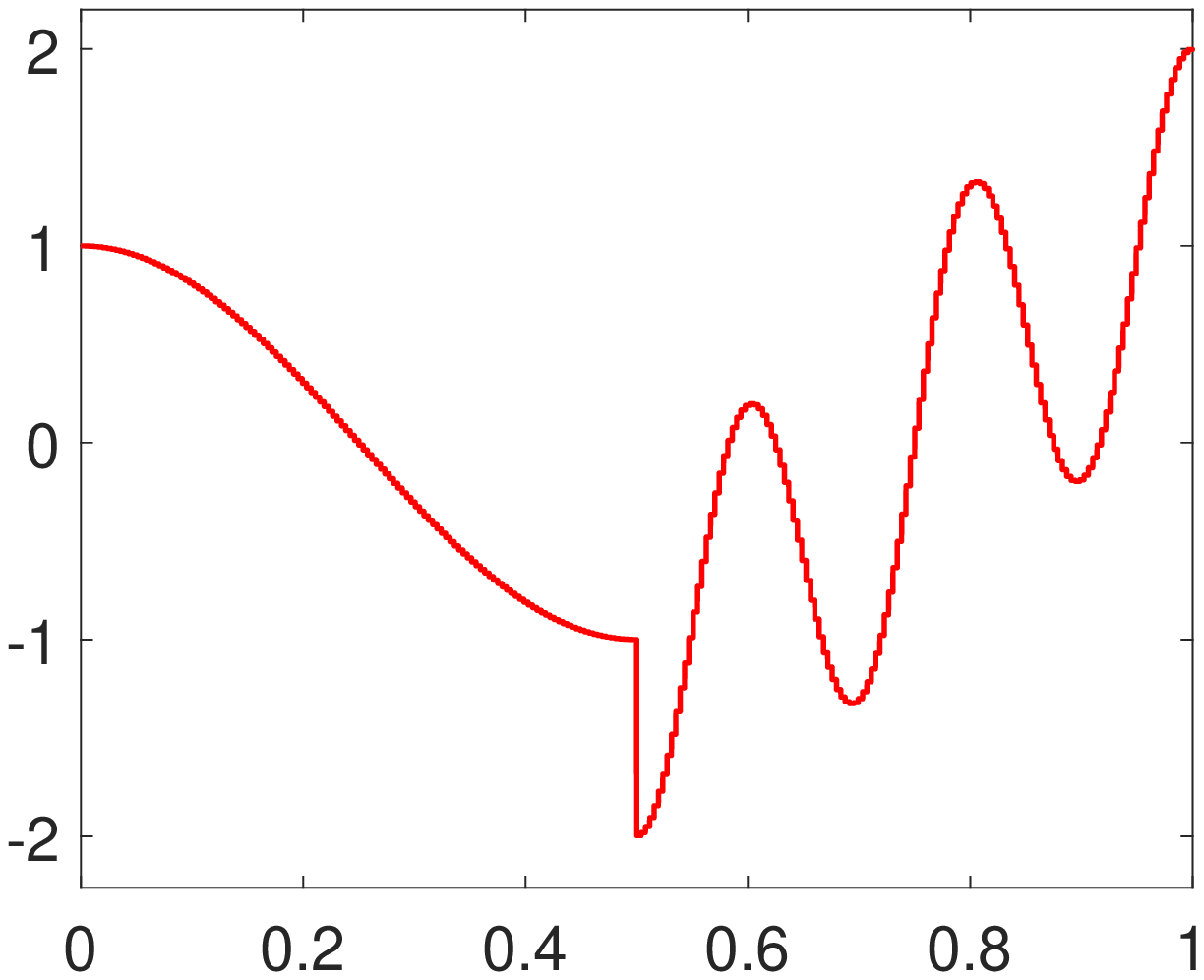}\label{fig:TWDisc}}
	\quad
	\subfloat[Error plot for reconstruction of $f$]{
		\includegraphics[width=0.48\textwidth]{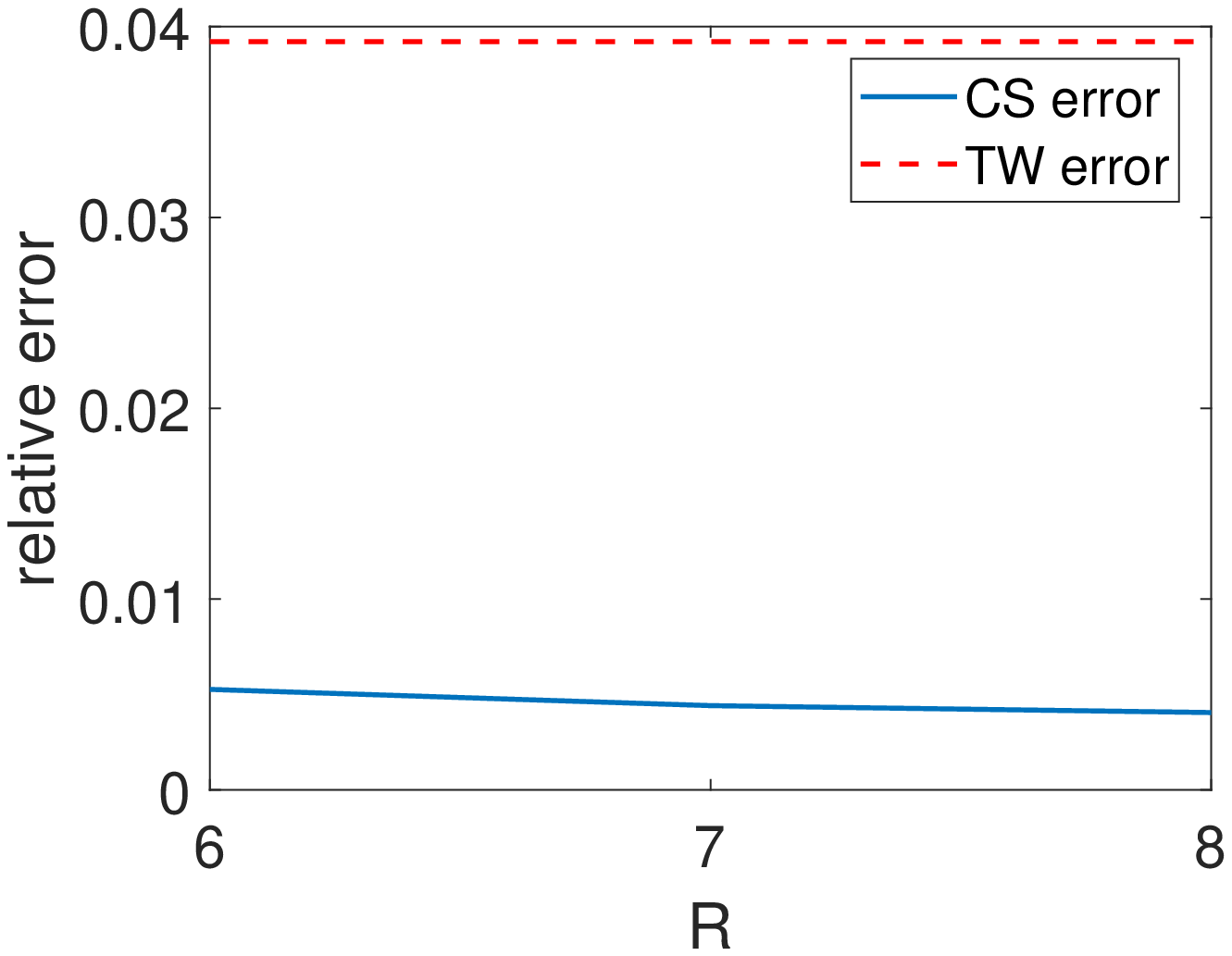}
		\label{fig:NMrelationCont}}
	\subfloat[Error plot for reconstruction of $g$]{
		\includegraphics[width=0.48\textwidth]{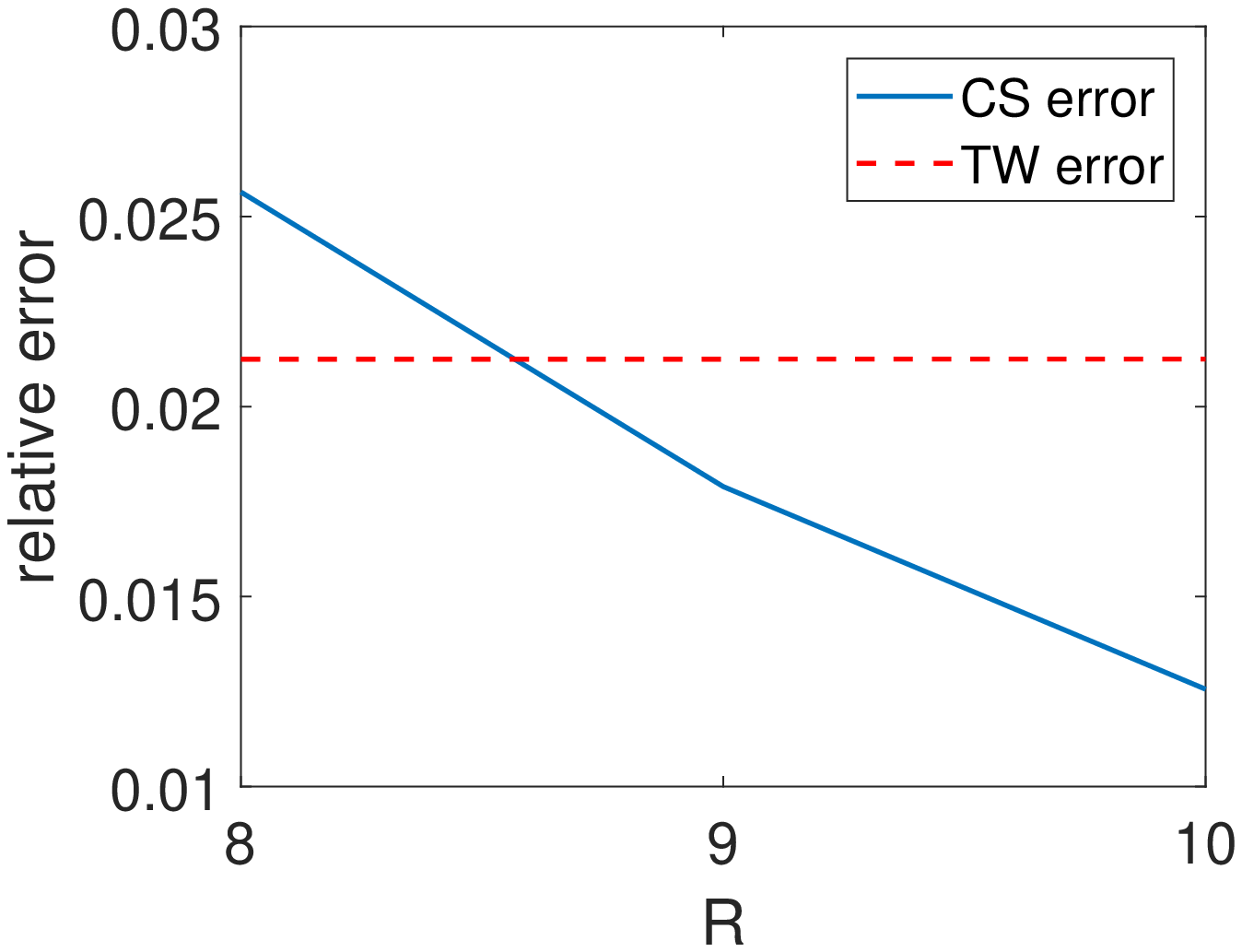}
		\label{fig:NMrelationDisc}}
	\caption{CS reconstruction of $f$ with its wavelet coefficients. TW reconstruction for $f$ and $g$ from the same number of samples as used in CS. CS and truncated Walsh series error values with $|m| = 64$ for $f$ and $|m|=256$ for $g$. The $x$-axis represents the sampling bandwidth $N=2^R$ and the $y$-axis the relative error term in the $\ell_2$ norm.}
	\label{fig:NMTWCScont}
\end{figure}

In this section we demonstrate how the theoretical results can be used in practice. We investigate the reconstruction of two signals with different smoothness properties. We start the analysis with the impact of the sampling bandwidth $N$ if we keep the number of samples constant. Further we compare the reconstruction with CS to the application of the inverse Walsh transform to the subsampled data. This highlights the subsampling possibilities of the method. Related to the discussion in Remark \ref{Rem:RelationFourier} we investigate the experimental differences between Fourier and Walsh measurements. Finally, we illustrate the importance of the structure of the sampling pattern with the \emph{flip test} as introduced in \cite{breaking}.

To see the impact of the sparsity of the signal we consider two functions. We have
\begin{equation}\label{Eq:OrigCont}
	f(x) = \cos(2\pi x)  + 0.2 \cos(2\pi 5 x)
\end{equation}
and
\begin{equation}\label{Eq:OrigDisc}
	g(x) = \cos(2 \pi x)  + \cos(2 \pi 5 x) \mathcal X_{x\geq 0.5}.
\end{equation}
The first function is very smooth and hence obeys only very few non-zero coefficients in the wavelet domain which all are located in the first levels. In contrast the function $g$ has a discontinuity at $0.5$. This results to non-zero elements in the wavelet domain also in higher levels and hence a larger $M$ for perfect or near perfect reconstruction. The sparsity structure of natural images behaves equally. The more discontinuities a signal has the more higher level coefficients need to be reconstructed. Therefore, a prior analysis on the signals that will be reconstructed is important, even though all natural signals have the same coefficient structure with a few very large ones in the beginning and fewer in the high levels. It only differs in the number of higher level coefficients. This can be seen for the two examples. For both functions we have that the first level is not sparse, and we therefore need to sample $m_1$ fully. However, for $m_k$ with $k \geq 2$ we can use the main theorem and reduce the number of samples $|m| = \sum_{k=1}^r m_k$. The sampling pattern is chosen such that the first level is sampled fully and that the rest of the possible samples is divided equally on all $m_k$ and hence independent on the size of the level.

For the implementation of the optimization problem \eqref{Eq:ReconstructionProblem} we have chosen $L = 2^{12}$ it can be observed that the reconstruction does not change from $L=2^{10}$ onwards. Therefore, it can be assumed that the algorithm has converged at this point and that this finite setting resembles the infinite setting. We can see that the location of the highest level coefficients only depends on the choice of $N$. The code for the numerical experiments relies on the \emph{cww} Matlab package for the reconstruction of wavelet coefficients from Walsh samples by Antun in \cite{GSCS}. The optimization problem is solved with SPGL-1 from Antun available at \cite{SPGL1}.  
We adapted the code to include the infinite setting. It can be found in \cite{CodeExperiments}. The wavelet in use is the Daubechies wavelet with $p=4$ vanishing moments.

In figure \ref{fig:RecDiscont} the reconstruction of $g$ from $|m|=256$ is presented for different choices of $N$. The related sampling pattern are shown in figure \ref{fig:samplingPattern}. The aim of this experiment is to show the impact of the balancing property. Hence, how the choice of $N$ effects $M$ the coefficient bandwidth. To make this more visible we have plotted the wavelet coefficients of the reconstruction next to the reconstructed signals. It can be seen that the coefficients with numbering larger than $N$ cannot be reconstructed with CS. However, it can also be seen that the square relation between $N$ and $M$ in the balancing property in \eqref{Eq:MainTheoAssumptionNM} is not sharp as we are able to reconstruct coefficients larger than $\sqrt{N}$. The decay of the error terms with increasing $N$ can be seen in figure \ref{fig:NMrelationDisc}. Without increasing the number of samples we can decrease the error term in the reconstruction for CS and improve over the directed inversion of the samples.

In figure \ref{fig:CSCont8}, \ref{fig:RecWaveCoeffCont} and \ref{fig:NMrelationCont} we see the related experiments for the continuous signal $f$. The signal only obtains a few non-zero coefficients in the first levels. Therefore, the increased number of $N$ does not change the error or reconstruction which is already for the smallest choice $N=2^6$ nearly invisible. That is the reason why we did not show the reconstruction for $N=2^7, 2^8$ as there is no visible difference. But also in this case it can be seen that we only reconstruct coefficients indexed smaller than $N$ but still larger than $\sqrt{N}$ which implies that the bound for the balancing property is also not sharp in the continuous and very sparse setting. 

After the analysis of the impact of the balancing property on the reconstruction, we show how the number of samples influences the reconstruction. As it has been shown in the main theorem we have to control two parts the balancing property which mainly aims at the bandwidth of the samples $N$ and the related bandwidth of the reconstructed coefficients $M$. Here, the number of coefficients in each level is not considered. This is part of the second requirement on the number of samples in each level $m_k$. It has been seen that beside other logarithmic factors this mainly depends on the sparsity $s_k$ in that level and with exponentially decaying impact also on the sparsity in the neighbouring levels. It can be seen that the discontinuous function $g$ has wavelet coefficients in higher levels. However, in every level there are only one or two and hence the signal is very sparse in those higher levels. This can be exploited to reduce the number of samples drastically as in figure \ref{fig:CSDiscont64}. Even though the error increases a small amount we were able to reduce the number of samples by to $64$ and this is only $25\%$ of the previous number of samples. In this case the contrast to the truncated Walsh transform gets even more obvious as in figure \ref{fig:TWDiscont64}. The continuous function in contrast has only non-zero coefficients until $M=64$ however it is not very sparse up to this number. Therefore, the reconstruction from even less samples $|m|=32$ obeys more artefacts than before, see figure \ref{fig:CSCont32}. Nevertheless, the general structure is still more visible than with the direct Walsh inverse transform in figure \ref{fig:TWCont32}.

\begin{figure}
	\subfloat[CS reconstruction of $f$ with $\epsilon = 0.048$]{
		\includegraphics[width=0.48\textwidth]{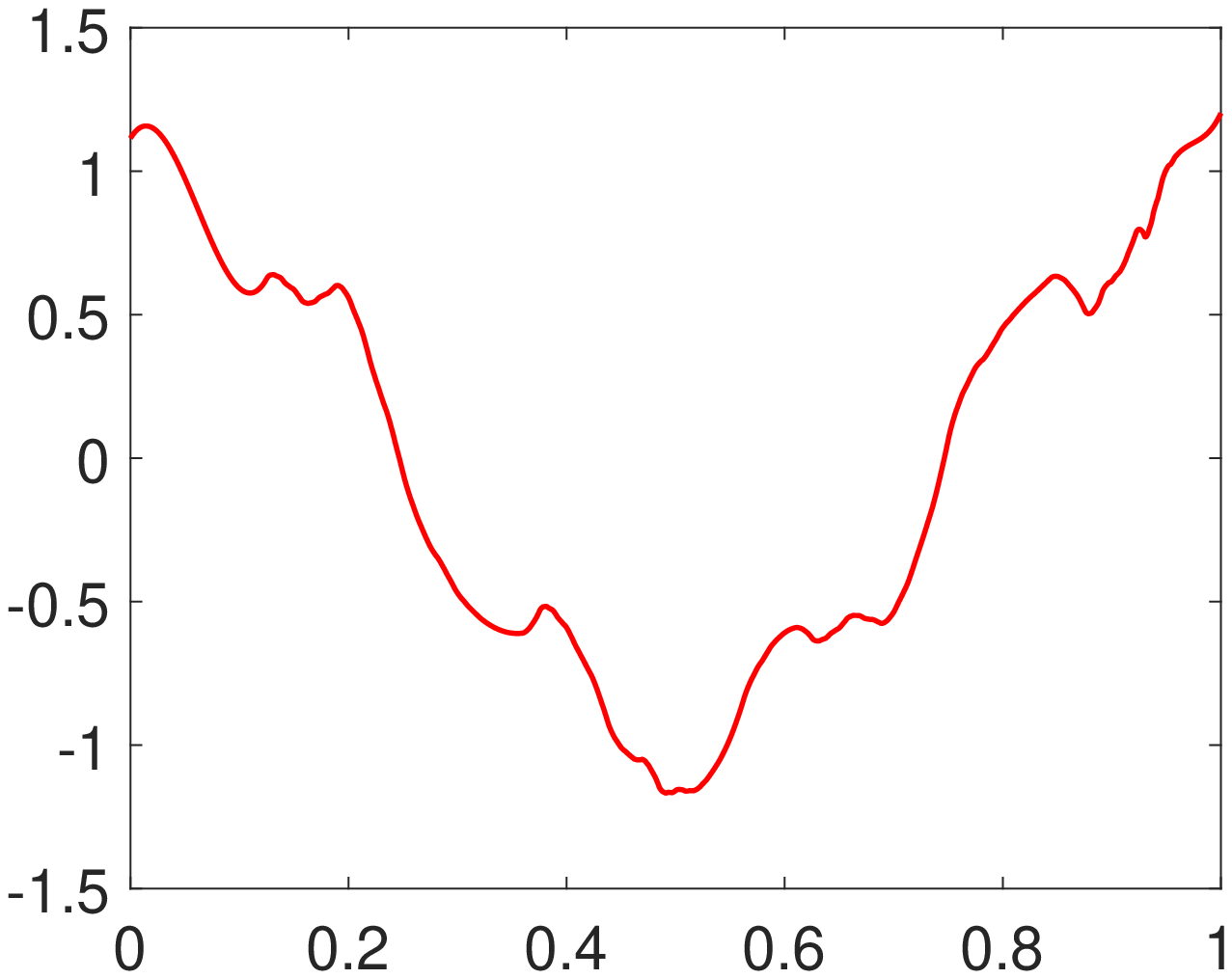}
		\label{fig:CSCont32}}
	\subfloat[CS reconstruction of $g$ with $\epsilon = 0.0298$]{
		\includegraphics[width=0.48\textwidth]{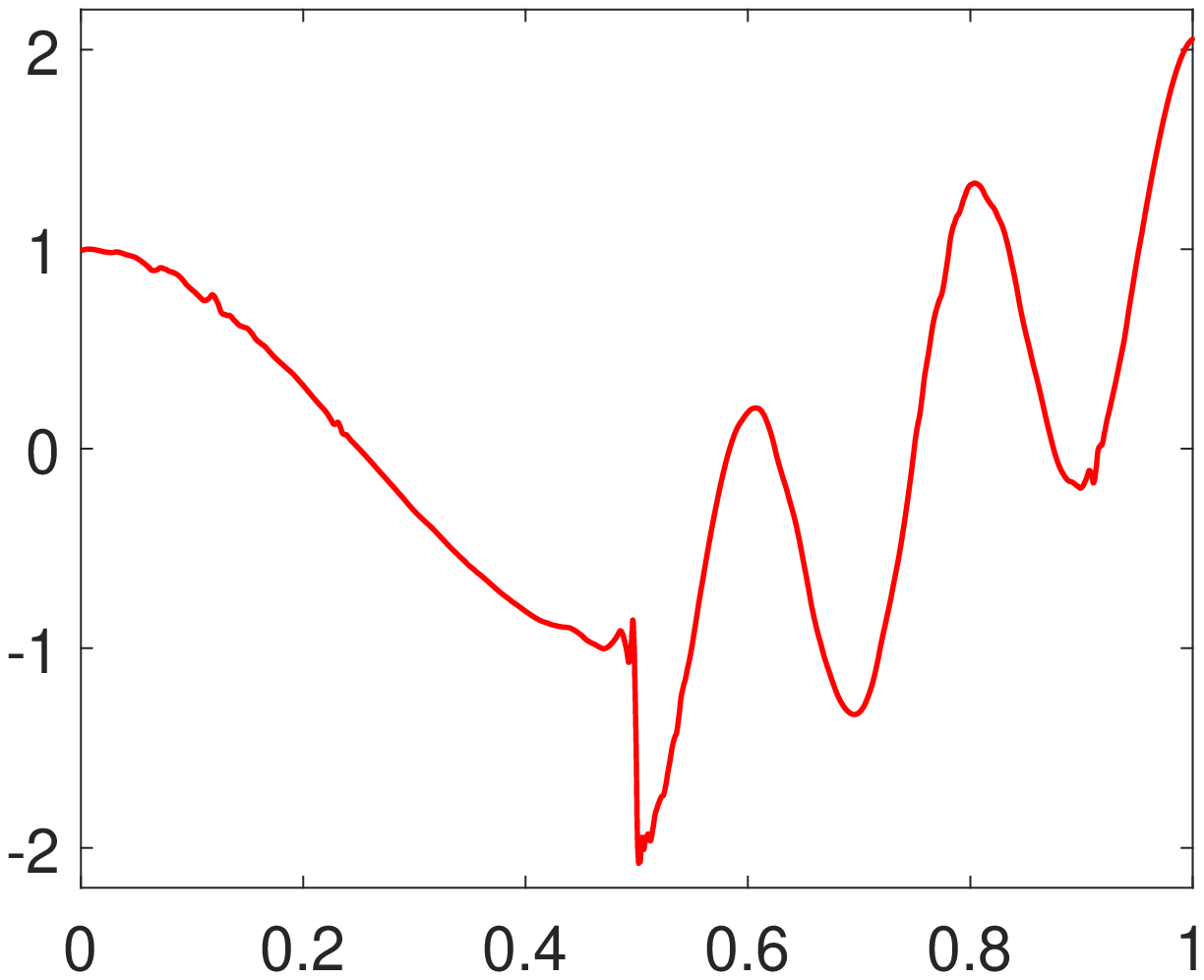}
		\label{fig:CSDiscont64}}
	\quad
	\subfloat[TW reconstruction of $f$ with $\epsilon = 0.078$]{
		\includegraphics[width=0.48\textwidth]{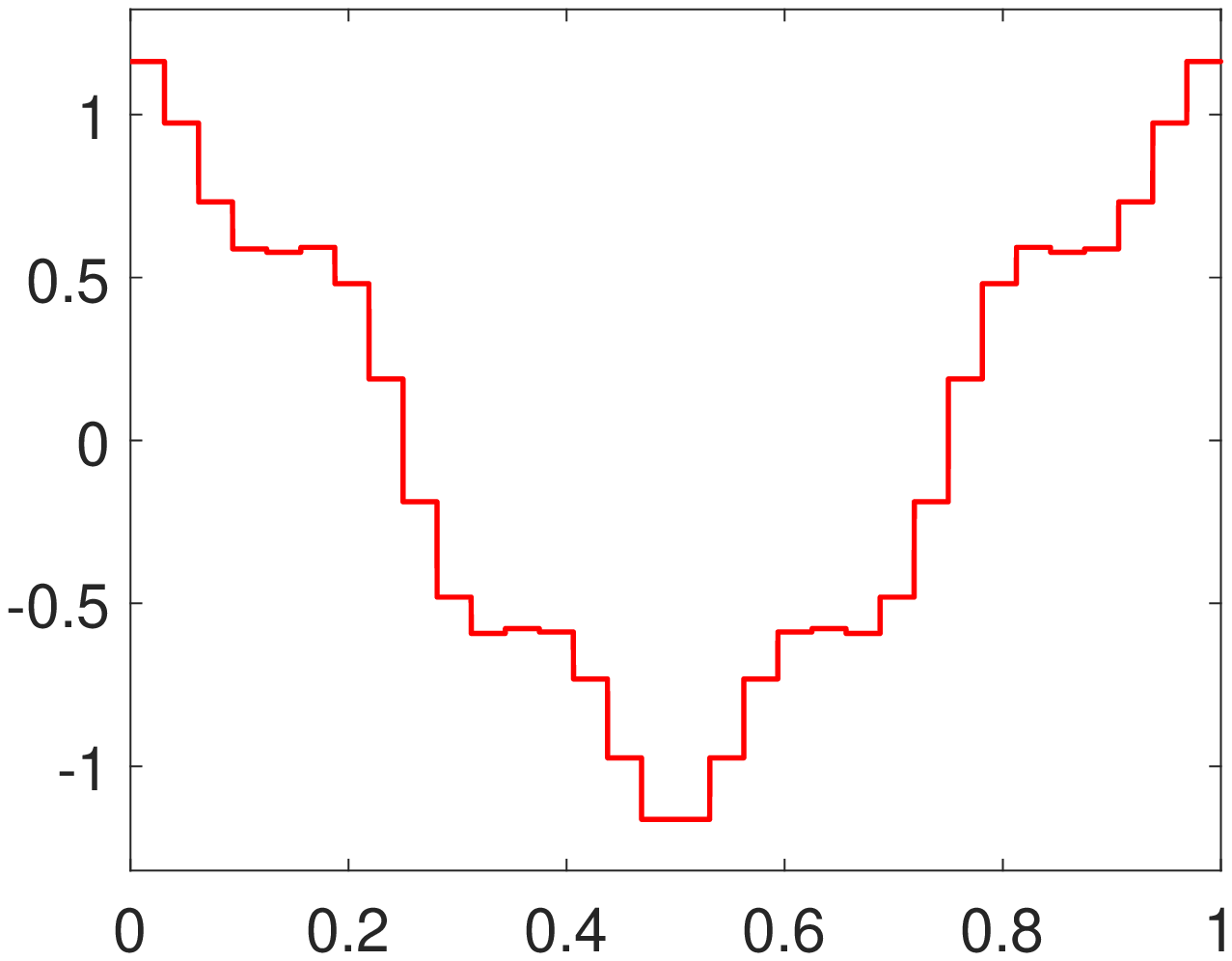}
		\label{fig:TWCont32}}
	\subfloat[TW reconstruction of $g$ with $\epsilon = 0.085$]{
		\includegraphics[width=0.48\textwidth]{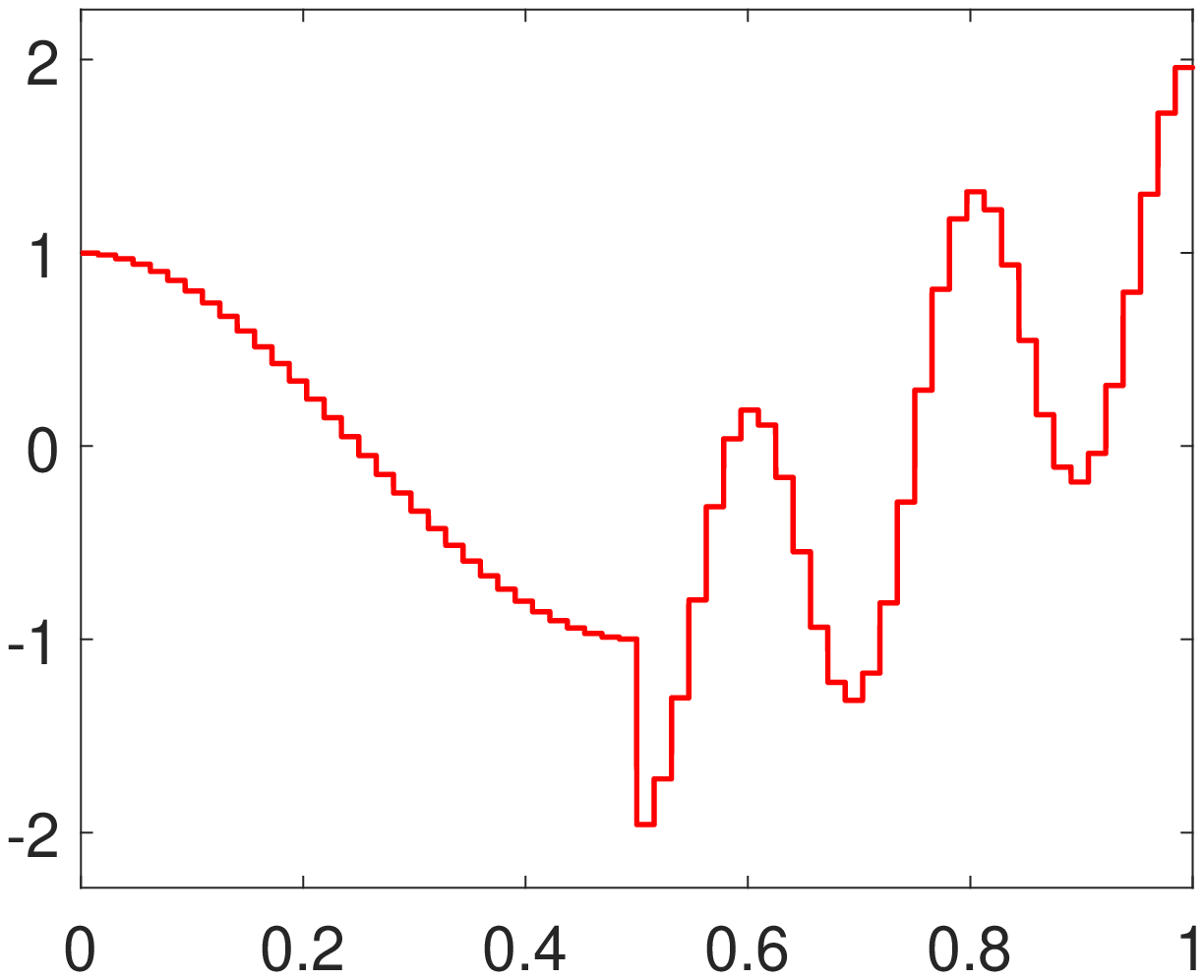}
		\label{fig:TWDiscont64}}
	\quad
	\subfloat[Sampling pattern for $f$ with $N=2^6$]{
		\includegraphics[width=0.48\textwidth]{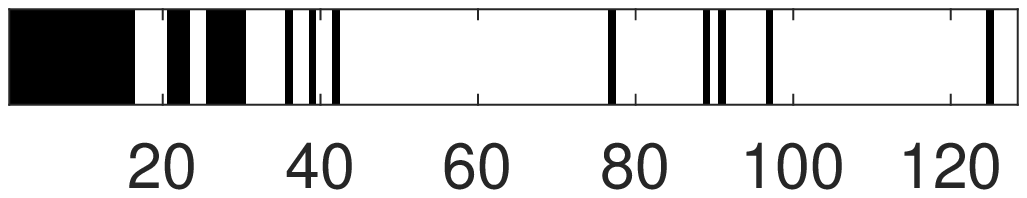}
		\label{fig:SampCont32}}
	\subfloat[Sampling pattern for $g$ with $N=2^8$]{
		\includegraphics[width=0.48\textwidth]{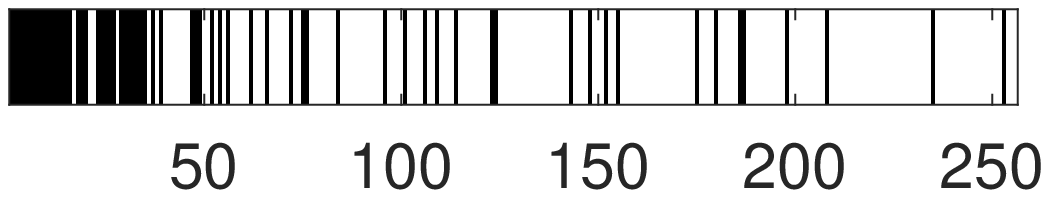}
		\label{fig:SampDiscont64}}
	\caption{CS and truncated Walsh series error values with $|m| = 32$ for $f$ and $|m| = 64$ for $g$} 
\end{figure}

\begin{figure}
	\subfloat[Fourier sampling with $|m|=128$ and $p=2$ with $\epsilon = 0.019$]{
		\includegraphics[width=0.42\textwidth]{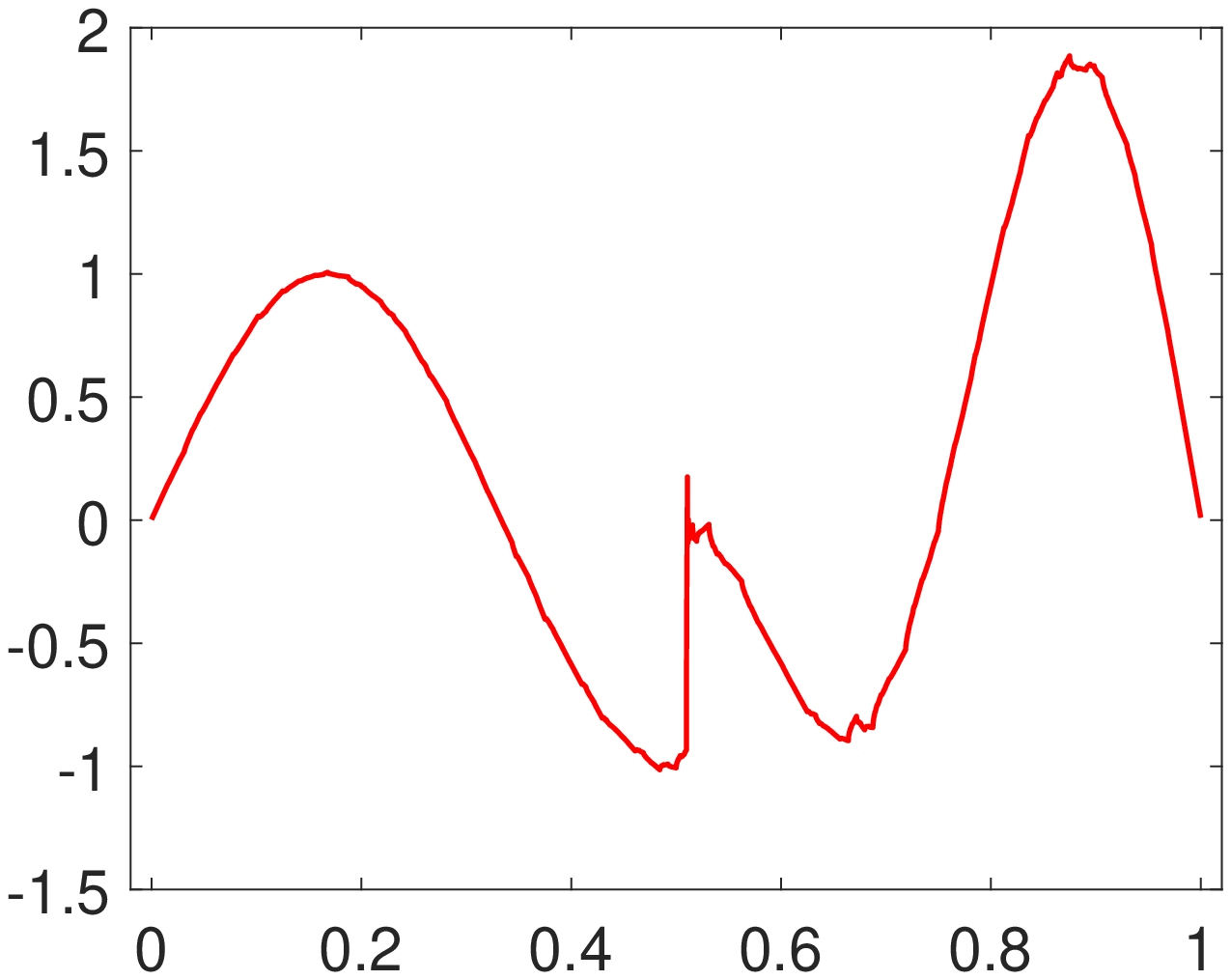}
		\label{fig:Four_vm2_128}}
	\subfloat[Fourier sampling with $|m|=128$ and $p=6$ with $\epsilon = 0.015$]{
		\includegraphics[width=0.41\textwidth]{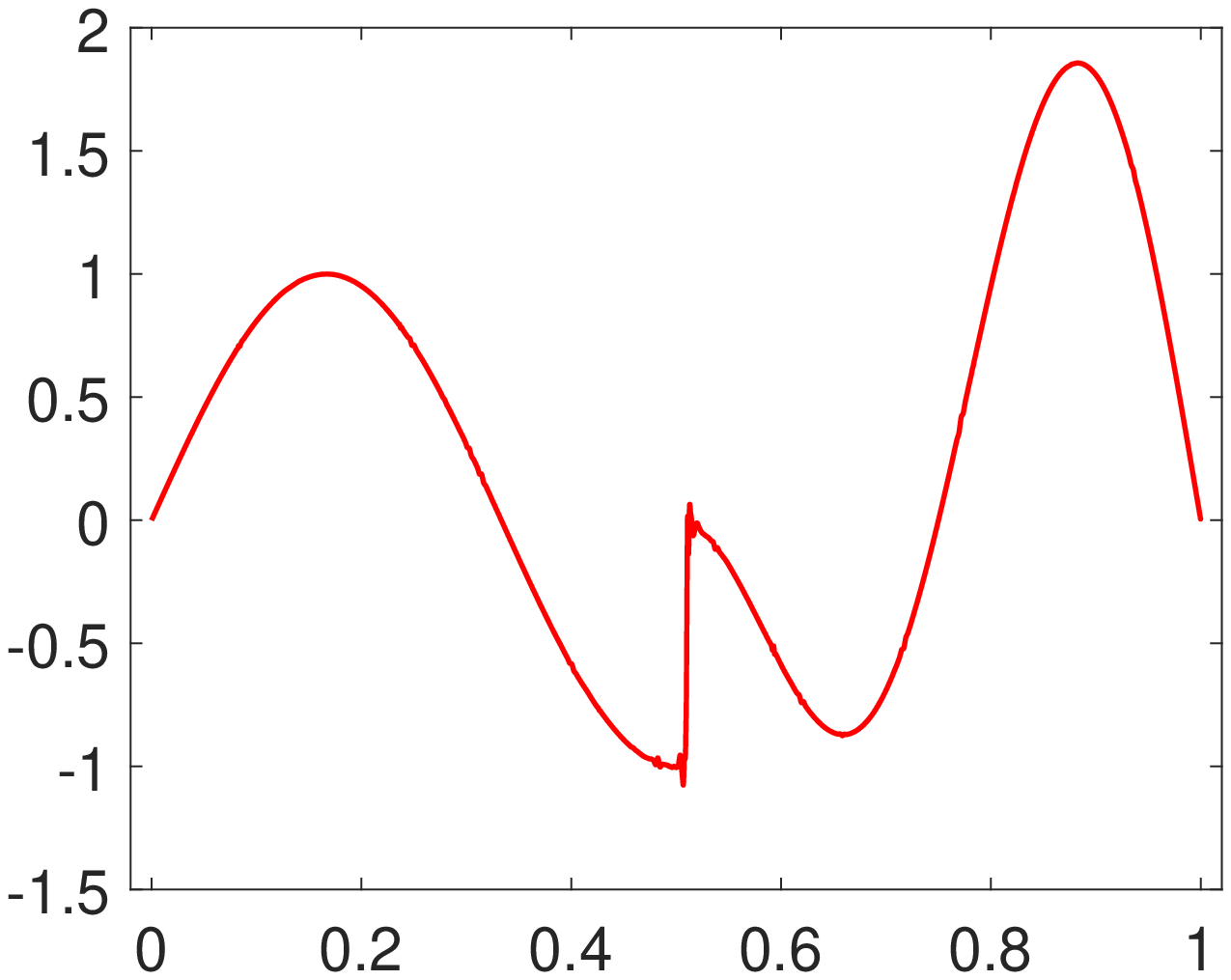}
		\label{fig:Four_vm6_128}}
	\quad
	\subfloat[Fourier sampling with $|m|=256$ and $p=2$ with $\epsilon = 0.009$]{
		\includegraphics[width=0.41\textwidth]{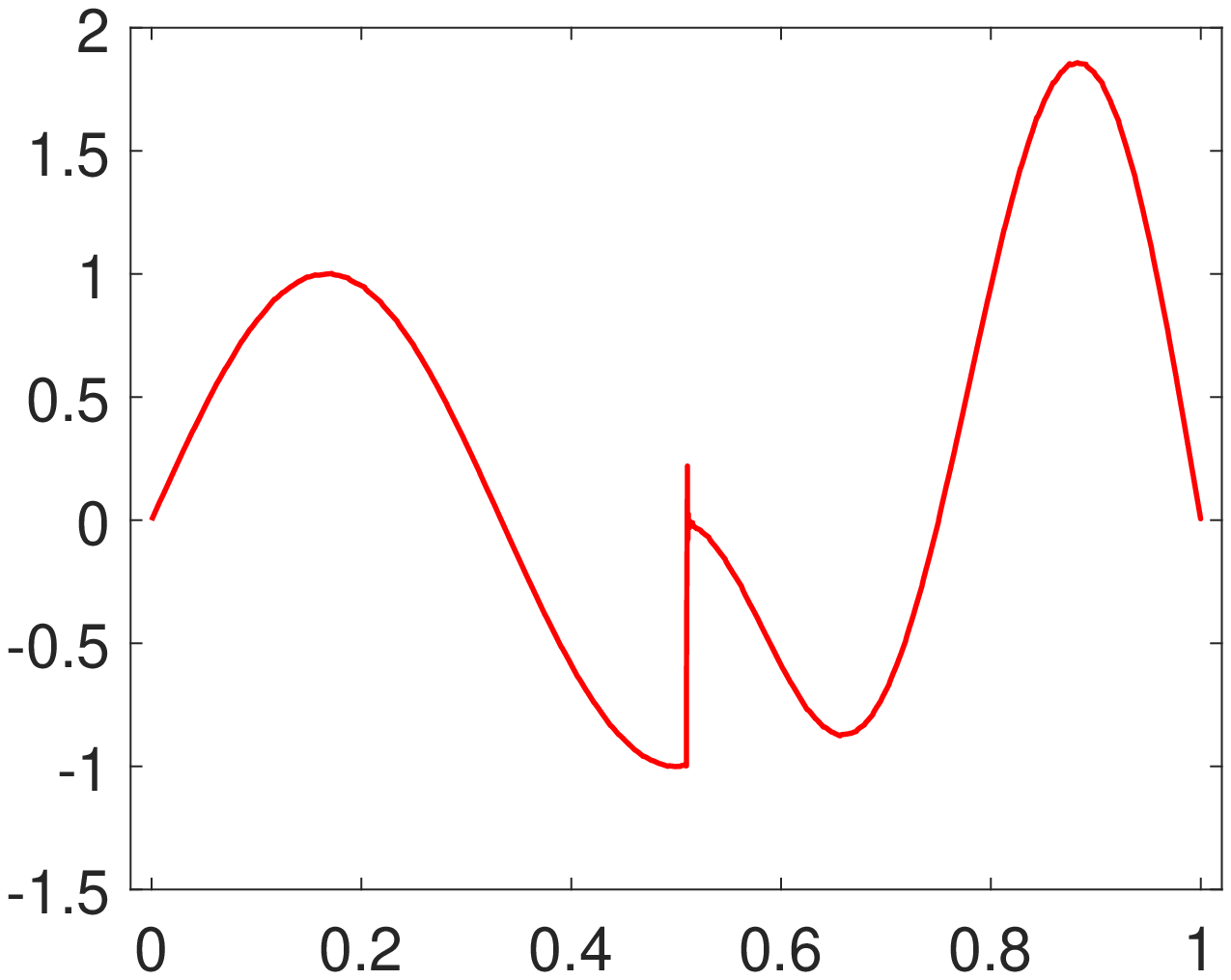}
		\label{fig:Four_vm2_256}}
	\subfloat[Fourier sampling with $|m|=256$ and $p=6$ with $\epsilon = 0.013$]{
		\includegraphics[width=0.41\textwidth]{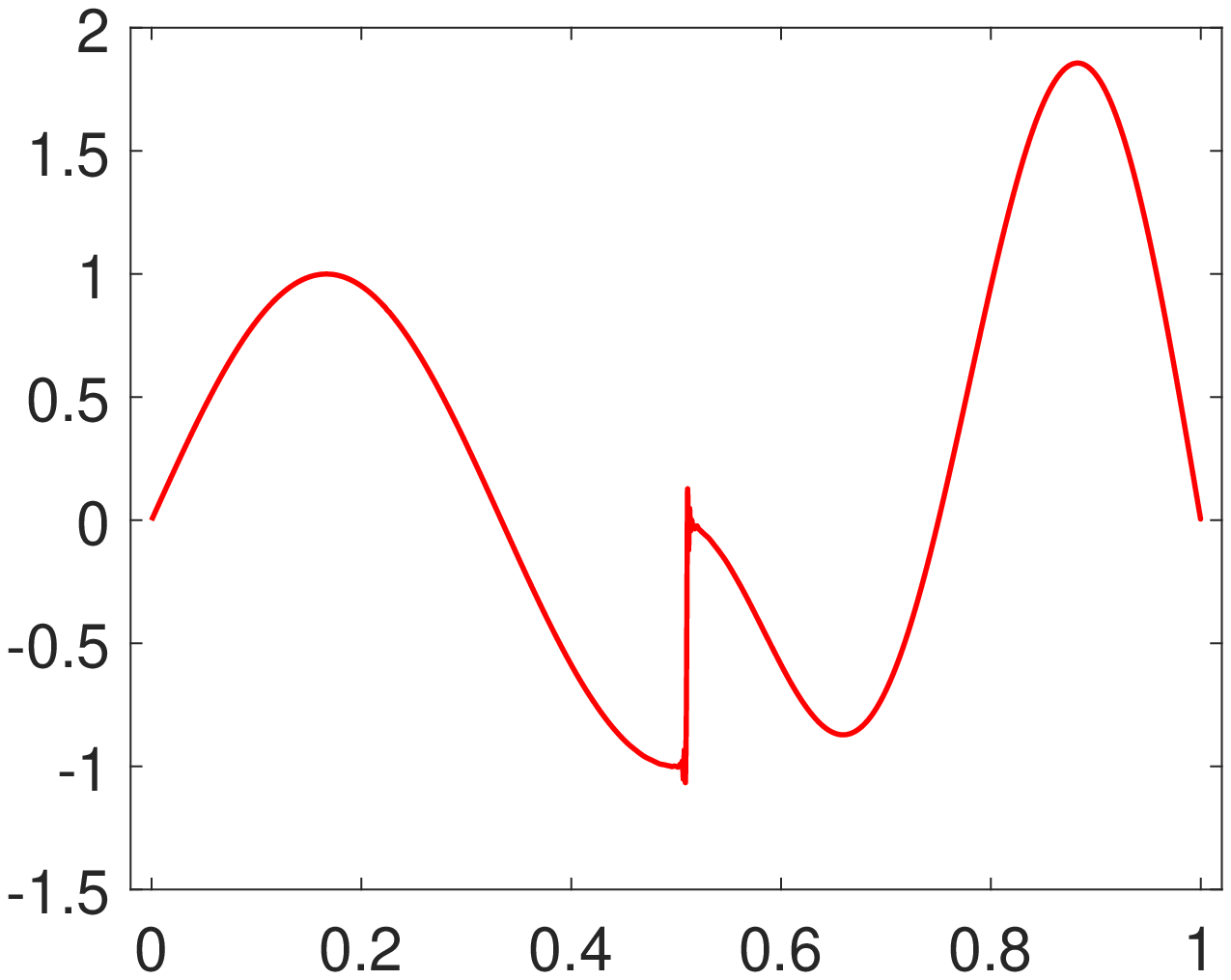}
		\label{fig:Four_vm6_256}}
	\quad
	\subfloat[Walsh sampling with $|m|=128$ and $p=2$ with $\epsilon = 0.023$]{
		\includegraphics[width=0.41\textwidth]{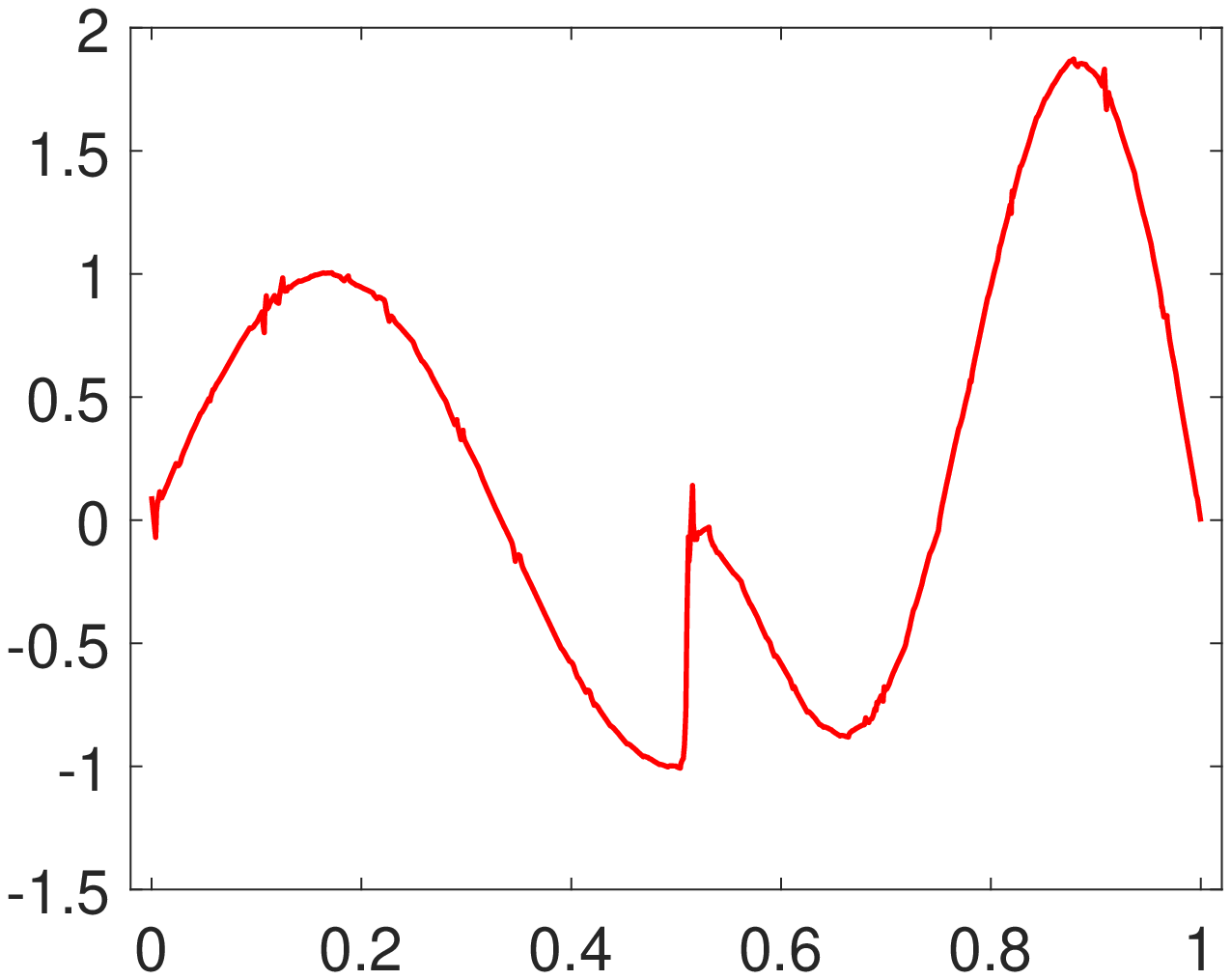}
		\label{fig:Walsh_vm2_128}}
	\subfloat[Walsh sampling with $|m|=128$ and $p=6$ with $\epsilon = 0.019$]{
		\includegraphics[width=0.41\textwidth]{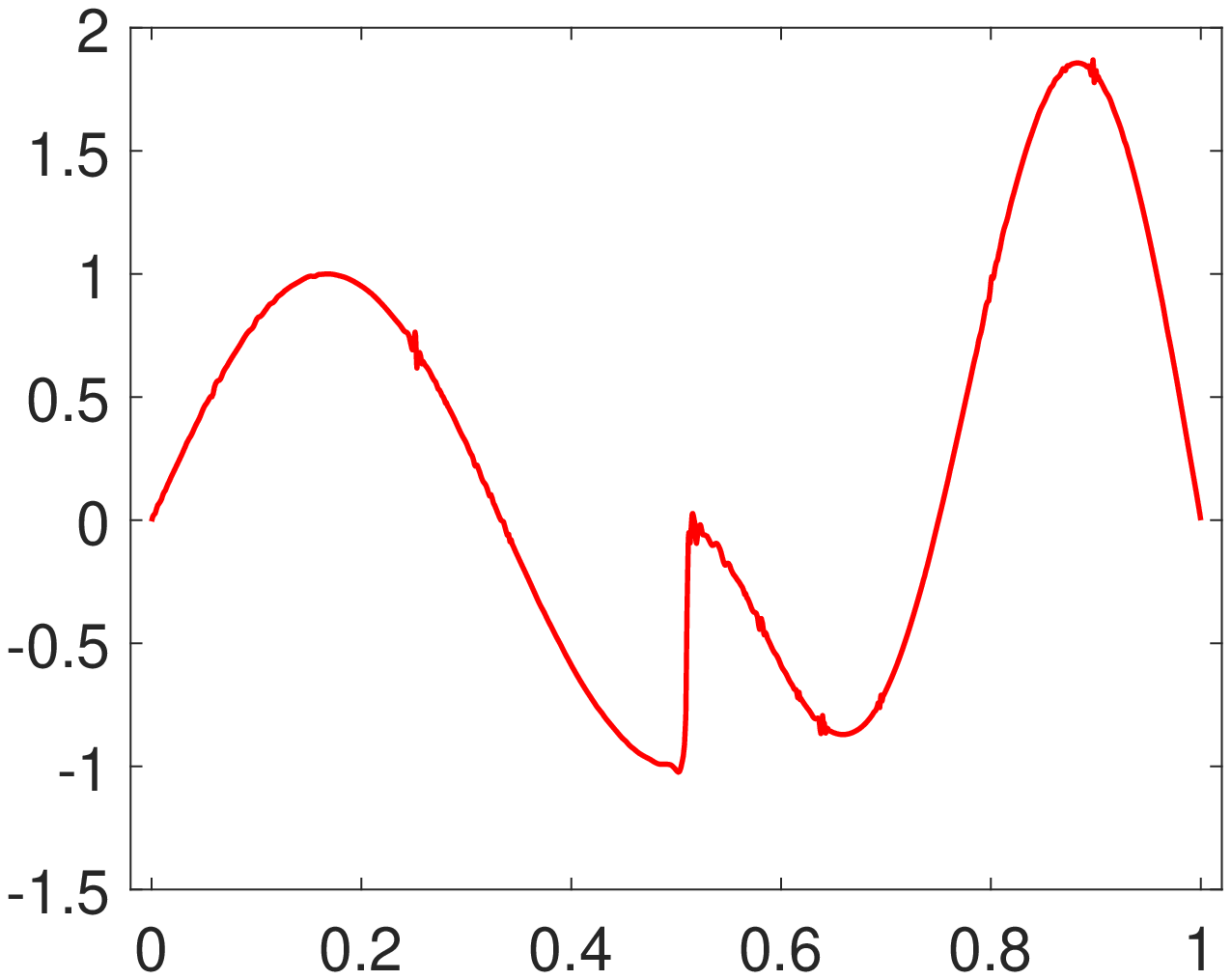}
		\label{fig:Walsh_vm6_128}}
	\quad
	\subfloat[Walsh sampling with $|m|=256$ and $p=2$ with $\epsilon = 0.009$]{
		\includegraphics[width=0.41\textwidth]{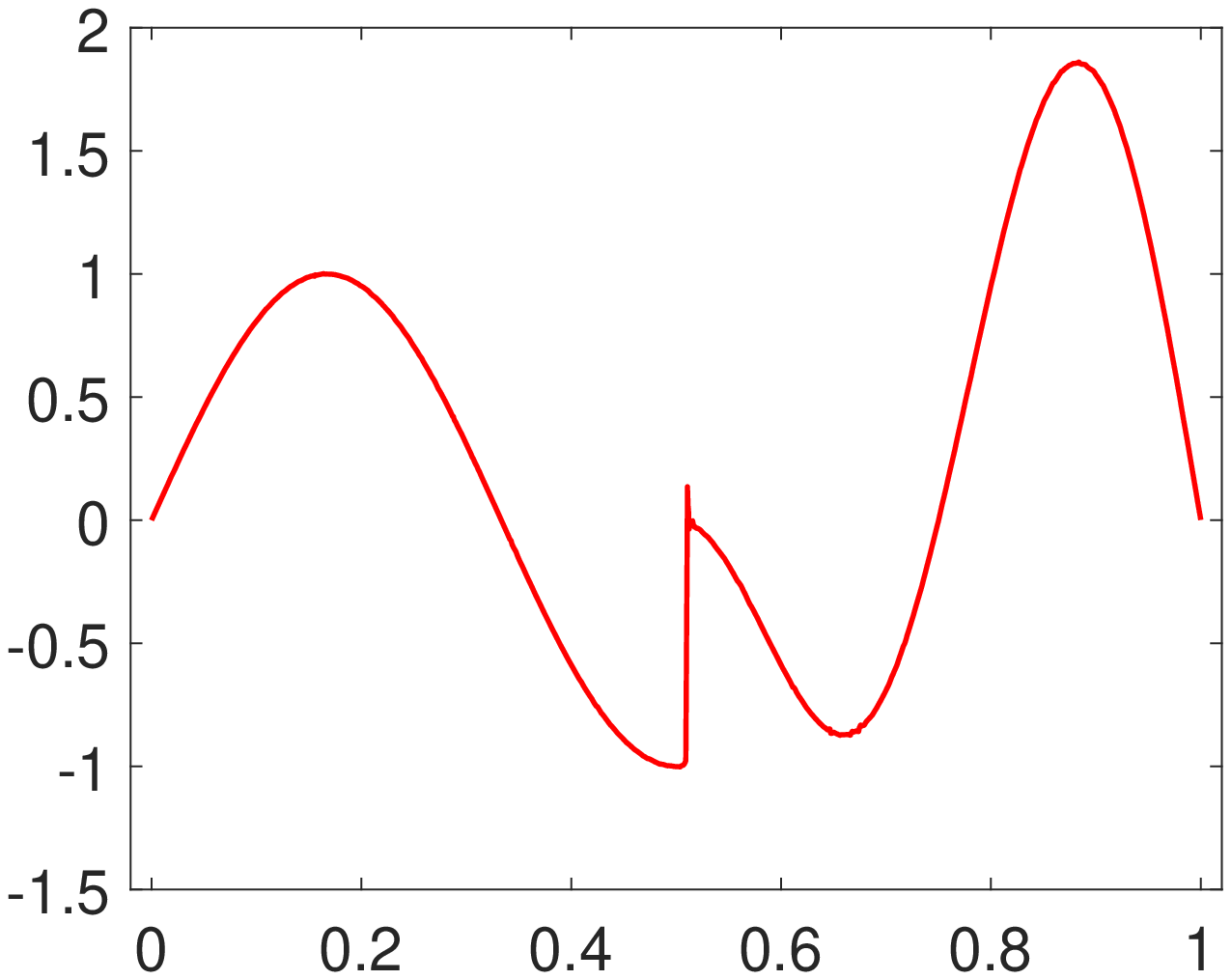}
		\label{fig:Walsh_vm2_256}}
	\subfloat[Walsh sampling with $|m|=256$ and $p=6$ with $\epsilon = 0.011$]{
		\includegraphics[width=0.41\textwidth]{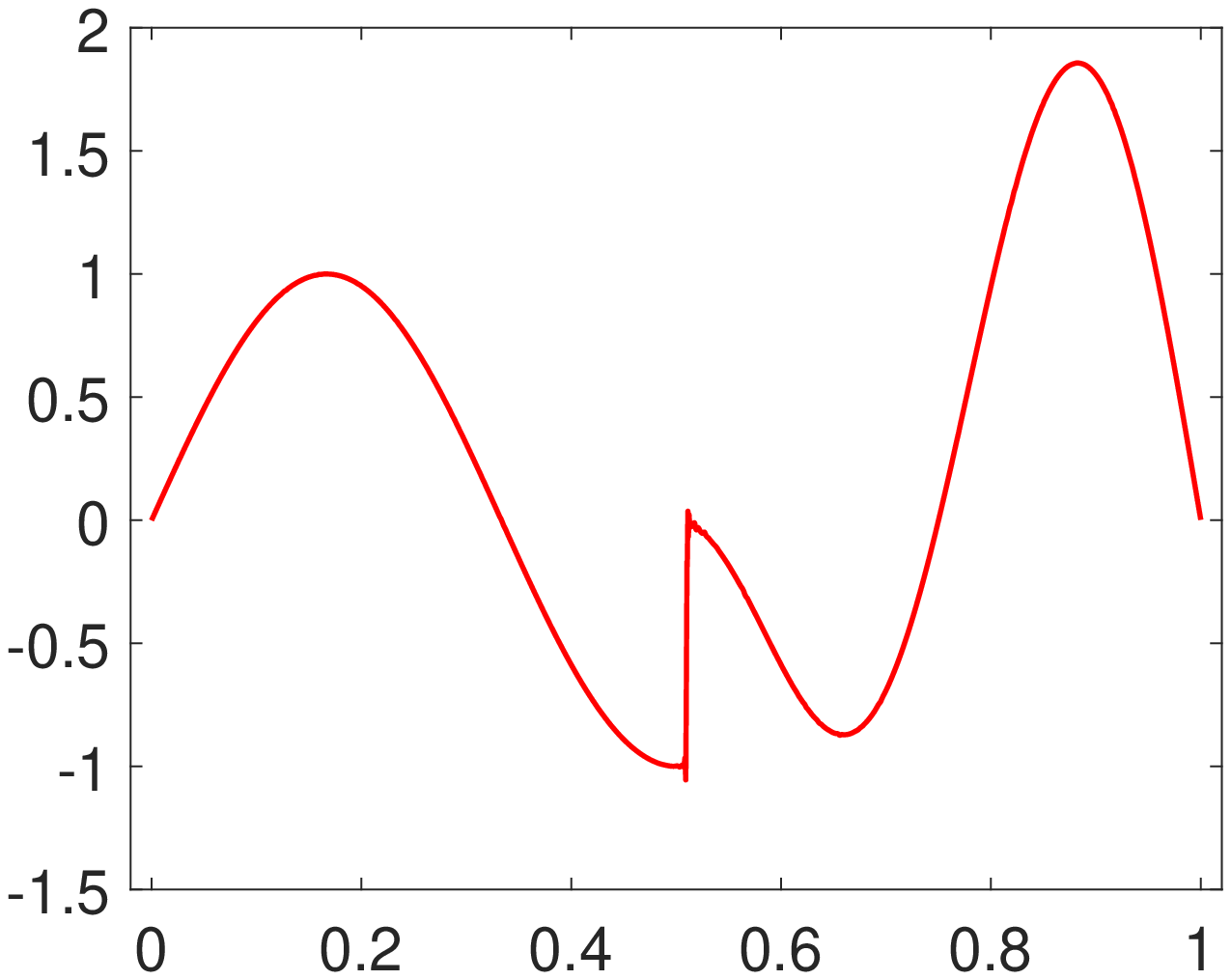}
		\label{fig:Walsh_vm6_256}}
	\caption{CS reconstruction from Fourier samples and Walsh samples for Daubechies wavelets with $2$ and $6$ vanishing moments. The experiment is set up with $N=2^{10}$, $L=2^{13}$ and $|m| = 128$ and $|m|=256$. The relative $\ell_2$-error is denoted by $\epsilon$.} 
	\label{fig:FourComparison}
\end{figure}

The conducted experiments can be compared to the Fourier setting. In Remark \ref{Rem:RelationFourier} we have discussed the relationship between the theoretical results for both sampling modalities. It is important to recall that the theorems offer sufficient conditions for the reconstruction. Therefore, the theoretical differences do not have to become visible in the experiments. Nevertheless, we want to discuss the different aspects of the theory and what we can observe in numerical experiments. The code to produce the Fourier experiments is an adaptation of the one discussed in \cite{MilanaClarice} and available at \cite{CodeExperiments}. The most obvious difference is the squared versus linear relationship in the estimate of the balancing property. It has been shown in the previous examples that these bounds are not sharp for the Walsh setting. Hence, the difference in the balancing property cannot be observed in numerical experiments. Next, we know from the theory that the smoothness of the wavelet impacts the decay rate under the Fourier transform but not for the Walsh transform. This difference is clearly visible in the reconstruction matrix, see Figure \ref{fig:RecMatrix}. Finally, there is also a difference in the number of necessary samples per level because of the number of vanishing moments. With more vanishing moments the impact of the other levels decreases exponentially for the Fourier case. Additionally, the location of the non-zero elements in the coefficient vector changes with the choice of the wavelet. In Figure \ref{fig:FourComparison} we have illustrated the results of the reconstruction of the same function for Fourier measurements and Walsh measurements with wavelets of two different vanishing moments and also changing number of samples. The illustration shows us that for a small number of samples the reconstruction with Fourier gives better results and gives smaller relative error as well as less visible artefacts. However, for a large enough sampling size the reconstructions are in both cases very close to the original function. Moreover, it is possible to see that the different wavelets tend to different artefact types. It can be observed that the wavelets with $2$ vanishing moments are able to recover the discontinuity very well and have less ringing artefacts around it in contrast to the case of wavelets with $6$ vanishing moments. However, in the smoother areas the reconstruction with fewer vanishing moments leads to more artefacts as can be seen in Figure \ref{fig:Four_vm2_128} and \ref{fig:Four_vm6_128}. 

Finally, we demonstrate that the structure of the signal coefficients and the change of basis matrix are very important. For this sake we conducted the same experiment for the continuous function $f$ with a flipped sampling pattern, see figure \ref{fig:samplingFlip}. The reconstruction is nowhere close to perfect and the original signal is not even identifiable. Hence, it is important to take the structure into account.

The experiments show that both parts the number of samples per level and the balancing property are important for the success of the method. For the practical application it is very helpful to be able to estimate beforehand the size of $M$ as well as the sparsity per level.

\begin{figure}
	\subfloat[CS reconstruction]{
		\includegraphics[width=0.48\textwidth]{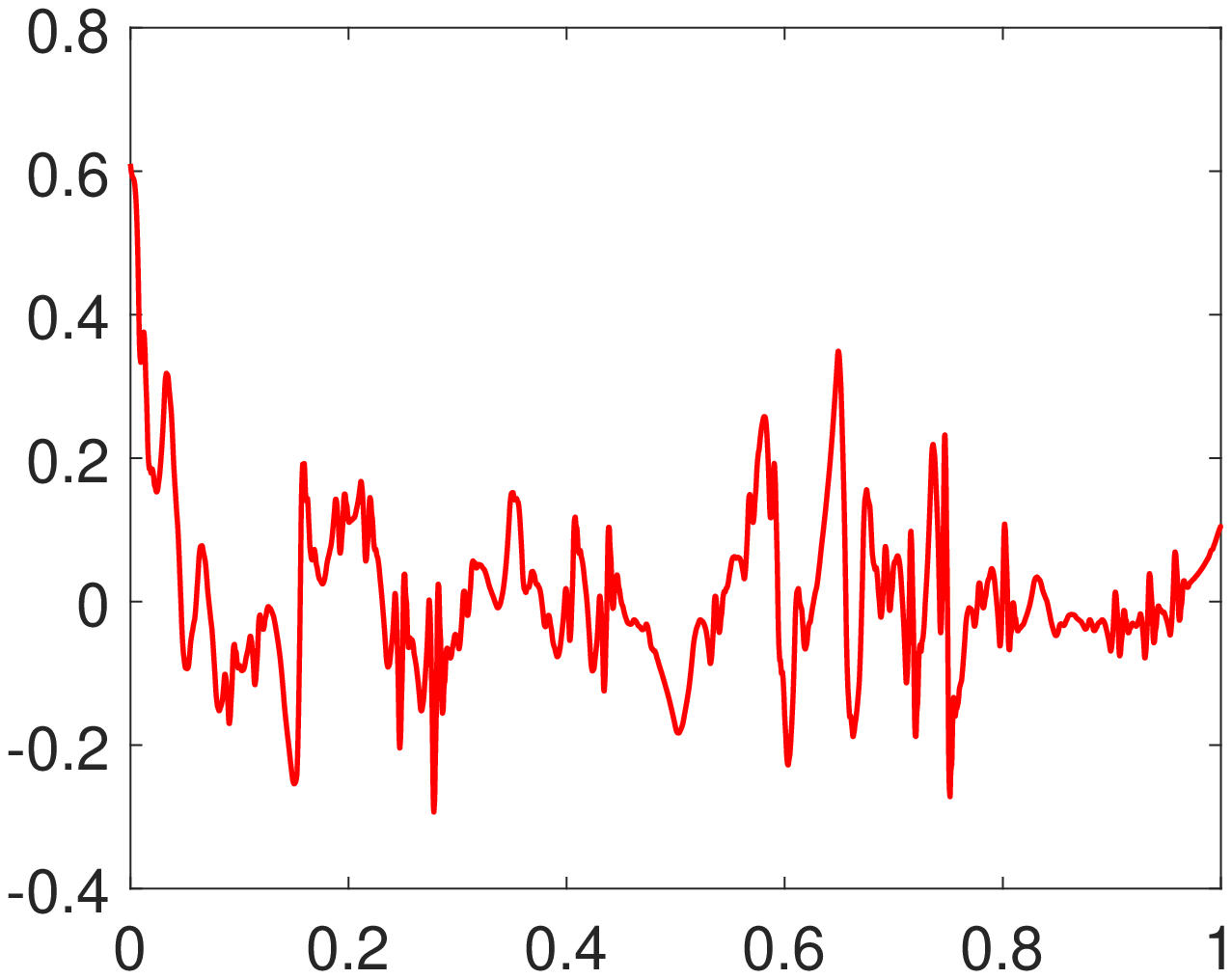}
		\label{fig:RecFlip}}
	
	\subfloat[Flipped sampling pattern]{
		\includegraphics[width=0.48\textwidth]{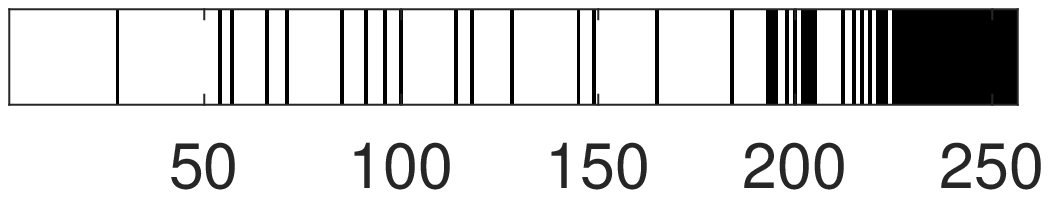}
		\label{fig:samplingFlip}}
	\caption{Reconstruction of $f$ with flipped samples with $N=2^8$ and $|m|=64$}
	\label{fig:FlipTest}
\end{figure}

\section{Conclusion}\label{Ch:Conclusion}

In this paper we have analysed the reconstruction of a one-dimensional signal from binary measurements with structured CS. We gave non-uniform recovery guarantees for the reconstruction with boundary corrected Daubechies wavelets. 
Additionally, we showed the numerical gains and the problems that arise when the theory is not taken into account.

These result fit nicely in the theory of non-linear reconstruction. We now have knowledge about uniform and non-uniform recovery guarantees for two of the major measurement types: Fourier and binary with Daubechies wavelets. For the future it would be of interest to investigate the theory for Radon measurements. Additionally, the bounds for the recovery guarantees are not tight and hence it might be possible to improve them. This relates closely to the question which wavelet type is most suitable for the reconstruction from binary measurements. This, however, is so far not known. Therefore, a further investigation of the relationship between the different wavelet types and Walsh functions would be of interest as numerical experiments suggest that there exists a difference.

\section*{Acknowledgement}

LT  acknowledges  support  by  the  UK  Engineering  and  Physical  Sciences  Research  Council  (EPSRC) grant EP/L016516/1 for the University of Cambridge Centre for Doctoral Training, the Cambridge Centre for Analysis. AH acknowledges support from a Royal Society University Research Fellowship. This is a post-peer-review, pre-copyedit version of an article published in Journal of Fourier Analysis and Applications. The final authenticated version is available online at: http://dx.doi.org/10.1007/s00041-021-09813-6.

\end{document}